\tikzset{
  symbol/.style={
    draw=none,
    every to/.append style={
      edge node={node [sloped, allow upside down, auto=false]{$#1$}}}
  }
}
\newcommand{\Z}{\mathbb{Z}}
\newcommand{\R}{\mathbb{R}}
\newcommand{\BC}{\mathbb{C}}
\newcommand{\OO}{{\mathcal O}}
\newcommand{\SL}{\mathrm{SL}}
\newcommand{\GL}{\mathrm{GL}}
\newcommand{\SO}{\mathrm{SO}}
\newcommand{\Sp}{\mathrm{Sp}}
\newcommand{\Fr}{\mathrm{Fr}}
\newcommand{\RG}{\mathrm{G}}
\newcommand{\Rep}{\underline{\mathrm{Rep}}}
\newcommand{\half}[1]{\frac{#1}{2}}
\newcommand{\comment}[1]{}
\newcommand{\EE}{\mathcal{E}}
\newtheorem{thm}{Theorem}[section]
\newtheorem{cor}[thm]{Corollary}
\newtheorem{lemma}[thm]{Lemma}
\newtheorem{prop}[thm]{Proposition}
\newtheorem {conj}[thm]{Conjecture}
\newtheorem {assu}[thm]{Assumption}
\newtheorem {ques/conj}[thm]{Question/Conjecture}
\newtheorem {ques}[thm]{Question}
\newtheorem{defn}[thm]{Definition}
\newtheorem{remark}[thm]{Remark}
\newtheorem{exmp}[thm]{Example}
\newtheorem{assumption}[thm]{Working Hypotheses}
\newtheorem*{globalcond*}{Global Condition}
\newtheorem*{localcond*}{Local Condition}
\newtheorem*{globalconj*}{Global Conjecture}
\newtheorem*{localconj*}{Local Conjecture}
\newtheorem*{nonzero*}{Conjecture on the non-vanishing of the normalized intertwining operators}
\newtheorem*{holo*}{Conjecture on the holomorphicity of the normalized intertwining operators}
\DeclareMathOperator{\Ad}{Ad}
\DeclareMathOperator{\Aut}{Aut}
\DeclareMathOperator{\Hom}{Hom}
\DeclareMathOperator{\End}{End}
\DeclareMathOperator{\rank}{rank}
\numberwithin{equation}{section}
\let\oldbullet\bullet
\renewcommand{\bullet}{{\vcenter{\hbox{\tiny$\oldbullet$}}}}
\begin{document}

\title[Closure ordering conjecture on local Arthur packets]{The closure ordering conjecture on local Arthur packets of classical groups}

\author{Alexander Hazeltine}
\address{Department of Mathematics\\
University of Michigan\\
Ann Arbor, MI, 48109, USA}
\email{ahazelti@umich.edu}

\author{Baiying Liu}
\address{Department of Mathematics\\
Purdue University\\
West Lafayette, IN, 47907, USA}
\email{liu2053@purdue.edu}

\author{Chi-Heng Lo}
\address{Department of Mathematics\\
Purdue University\\
West Lafayette, IN, 47907, USA}
\email{lo93@purdue.edu}

\author{Qing Zhang}
\address{School of Mathematics and Statistics, Huazhong University of Science and Technology, Wuhan, 430074, China}
\email{qingzh@hust.edu.cn}

%    General info
\subjclass[2020]{Primary 11F70, 22E50; Secondary 11F85, 22E55}

\date{\today}

\keywords{Admissible Representations, Local Arthur Packets, Local Arthur Parameters, Closure Ordering}

\thanks{The research of the second-named author is partially supported by the NSF Grants DMS-1702218, DMS-1848058, and by start-up funds from the Department of Mathematics at Purdue University. The fourth-named author is partially supported by the NSFC grant 12371010.}

\begin{abstract}
In this paper, we prove the closure ordering conjecture on the local $L$-parameters of representations in local Arthur packets of $\mathrm{G}_n=\mathrm{Sp}_{2n}, \mathrm{SO}_{2n+1}$ over a non-Archimedean local field of characteristic zero. Precisely, given any representation $\pi$ in a local Arthur packet $\Pi_{\psi}$, the closure of the local $L$-parameter of $\pi$ in the Vogan variety must contain the local $L$-parameter corresponding to $\psi$. 
    This conjecture reveals a geometric nature of local Arthur packets and is inspired by the work of Adams, Barbasch, and Vogan, and the work of Cunningham, Fiori, Moussaoui, Mracek, and Xu, on ABV-packets. 
    As an application, for general quasi-split connected reductive groups, 
    we show that the closure ordering conjecture implies the enhanced Shahidi conjecture, under certain reasonable assumptions. 
    This provides a framework towards the enhanced Shahidi conjecture in general.  
    We verify these assumptions for $\mathrm{G}_n$, hence give a new proof of the enhanced Shahidi conjecture. At last, we show that local Arthur packets cannot be fully contained in other ones, which is in contrast to the situation over Archimedean local fields and has its own interests.
\end{abstract}

\maketitle

\section{Introduction}
Let $F$ be a non-Archimedean local field of characteristic zero.  Let $\RG$ be a connected reductive group defined over $F$, $G=\mathrm{G}(F)$, $\widehat{G}(\BC)$ be the complex dual group, and  ${}^L{\RG}$ be the Langlands dual group of $G$. Let $\Pi(G)$ be the set of equivalence classes of irreducible admissible representations of $G$, and $\Phi(G)$ be the set of local $L$-parameters of $G$. The Local Langlands Conjecture asserts that there is a map $\phi \mapsto \Pi_\phi$, where $\phi\in \Phi(G)$, called the local Langlands correspondence, such that $\Pi_\phi$ is a finite subset of $\Pi(G)$ and the set $\{\Pi_\phi \, | \, \phi\in\Phi(G)\}$ forms a partition of $\Pi(G)$. The set $\Pi_\phi$ is called the local $L$-packet attached to $\phi.$ For any  $\pi\in\Pi(G)$, let $\phi_\pi$ be the local $L$-parameter for $\pi.$ 

To characterize the local components of square-integrable automorphic representations, for quasi-split classical groups, Arthur introduced the theory of local Arthur packets which are enlarged from local $L$-packets of Arthur type, and proved the Local Langlands Conjecture for symplectic groups and special orthogonal groups. 
Local Arthur packets are parameterized by local Arthur parameters. 
For simplicity, let $\RG_n=\Sp_{2n}, \SO_{2n+1}$ be a  symplectic or split odd special orthogonal group and let $G_n=\RG_n(F)$. Local Arthur parameters of $G_n$ are defined as $\widehat{\mathrm{G}}_n(\BC)$-conjugacy classes of
direct sum of irreducible representations
$$\psi: W_F \times \SL_2(\mathbb{C}) \times \SL_2(\mathbb{C}) \rightarrow \widehat{\mathrm{G}}_n(\BC),$$
\begin{equation}\label{lap}
  \psi = \bigoplus_{i=1}^r \phi_i|\cdot|^{x_i} \otimes S_{a_i} \otimes S_{b_i},  
\end{equation}
satisfying the following conditions: 
\begin{enumerate}
    \item [(1)]$\phi_i$ is a $d_i$-dimensional irreducible representation of $W_F$ whose image is bounded and consists of semi-simple elements;
    \item [(2)] $x_i \in \R$ and $|x_i|<\half{1}$;
    \item [(3)]the restrictions of $\psi$ to the two copies of $\SL_2(\mathbb{C})$ are analytic, $S_k$ is the $k$-dimensional irreducible representation of $\SL_2(\mathbb{C})$, and 
    $$\sum_{i=1}^r d_ia_ib_i = N:= 
\begin{cases}
2n+1 & \text{ when } G_n=\Sp_{2n}(F),\\
2n & \text{ when } G_n=\SO_{2n+1}(F).
\end{cases}
$$ 
\end{enumerate}
The first copy of $\SL_2(\mathbb{C})$ is called the Deligne-$\SL_2(\mathbb{C})$, denoted by $\SL_2^D(\mathbb{C})$. The second copy of $\SL_2(\mathbb{C})$ is called the Arthur-$\SL_2(\mathbb{C})$, denoted by $\SL_2^A(\mathbb{C})$. 
A local Arthur parameter $\psi$ given in \eqref{lap} is called {\it generic} if $b_i=1$ for $i=1, \ldots, r$; and is called {\it tempered} if additionally $x_i=0$ for $i=1, \ldots, r$. 
A local Arthur packet $\Pi_{\psi}$ is called generic (resp. tempered) if the corresponding local Arthur parameter $\psi$ is so.  
We let $\Psi^{+}(G_n)$ be the set of local Arthur parameters of $G_n$ and $\Psi(G_n)$ be the subset of $\Psi^+(G_n)$ consisting of local Arthur parameters $\psi$ whose restriction to $W_F$ is bounded. In other words, $\psi$ is in $\Psi(G_n)$ if and only if $x_i=0$ for $i=1,\dots, r$ in the decomposition \eqref{lap}. 

Given a local Arthur parameter $\psi$ as in \eqref{lap}, the local Arthur packet $\Pi_{\psi}$ defined in \cite[Theorem 2.2.1 and formula (1.5.1)]{Art13} is a finite multi-set of irreducible representations of $G_n$, satisfying certain twisted endoscopic character identities. 
Arthur showed that 
$\Pi_{\psi}$ consists of unitary representations when $\psi \in \Psi(G_n)$ (\cite[Theorem 1.5.1]{Art13}) and conjectured that $\Pi_{\psi}$ also consists of unitary representations when $\psi \in \Psi^+(G_n)$ (\cite[Conjecture 8.3.1]{Art13}). In a series of papers (\cite{Moe06a, Moe06b, Moe09a, Moe10, Moe11a}), M{\oe}glin explicitly constructed each local Arthur packet $\Pi_{\psi}$ and showed that it is multiplicity free. 
Then, Xu (\cite{Xu21a}) gave an algorithm to determine whether the representations in M{\oe}glin's construction are nonzero. In recent work (\cite{Ato20b}), 
Atobe gave a reformulation on  M{\oe}glin's construction, using the derivatives (see \S \ref{sec derivatives}) considered in \cite{Jan14, AM23}, which provides a way to compute the $L$-data.

To each local Arthur parameter $\psi,$ one can associate a local $L$-parameter $\phi_{\psi}$  defined as follows:
\begin{align}\label{eq phi_psi}
    \phi_{\psi}(w, x) := \psi\left(w, x, \begin{pmatrix}
        |w|^{\frac{1}{2}} & 0 \\
        0 & |w|^{-\frac{1}{2}}\\
\end{pmatrix}\right).
\end{align}
Arthur showed that the map $\psi\mapsto\phi_\psi$ is injective and the local $L$-packet $\Pi_{\phi_{\psi}}$ is contained in $\Pi_{\psi}$ (see \cite[Proposition 7.4.1]{Art13}). We say that a local $L$-parameter $\phi$, hence the corresponding local $L$-packet $\Pi_{\phi}$, is of Arthur type if $\phi=\phi_\psi$ for some local Arthur parameter $\psi.$
We also say that a representation $\pi$ is of Arthur type if $\pi\in\Pi_\psi$ for some local Arthur parameter $\psi$ and let $\Pi_A(G_n)$ denote the set of representations of $G_n$ of Arthur type. 

Since local Arthur packets are enlarged from local $L$-packets of Arthur type, 
while local $L$-packets are always disjoint, local Arthur packets often have nontrivial intersections. 
This intersection problem accounts for the difficulties of many problems related to local Arthur packets.
For symplectic and split odd special orthogonal groups, Atobe (\cite{Ato23}), the first three-named authors (\cite{HLL22}), independently gave different algorithms to determine whether a given representation is of Arthur type, and to determine all local Arthur packets containing a given representation of Arthur type. 

Note that $\Pi_{\psi}$ generally strictly contains $\Pi_{\phi_{\psi}}$.
Towards understanding how local Arthur packets are patched up via subsets of 
local $L$-packets, a direct question is as follows.

\begin{ques}\label{que intro}
Fix a local Arthur parameter $ \psi$, 
besides $\phi_{\psi}$, what are the other local $L$-parameters $\phi$ such that $\Pi_{\phi} \cap \Pi_{\psi} \neq \emptyset$?
\end{ques}

Given any local Arthur packet $\Pi_{\psi}$, one can use Atobe's reformulation to compute all local $L$-parameters $\phi$ such that $\Pi_{\phi} \cap \Pi_{\psi} \neq \emptyset$. 
Or, given any local $L$-packet $\Pi_{\phi}$, one can apply the algorithms in \cite{Ato23} or \cite{HLL22} to determine whether each representation in $\Pi_{\phi}$ is of Arthur type or not, and in the aﬀirmative case, compute all the local Arthur packets containing it. 
However, for many problems related to local Arthur packets, it is desirable to develop theoretical characterizations of local $L$-parameters appears in local Arthur packets. 

Towards Question \ref{que intro}, there is a closure ordering conjecture inspired by the work of local ABV packets (\cite{ABV92, Vog93, CFMMX22}).

\begin{conj}[Closure Ordering Conjecture, {\cite[Conjecture 2.1]{Xu21b}}]\label{main conj}
Let $\RG$ be a connected reductive group over $F$. Assume that there is a local Arthur packets theory for $G$ as conjectured in \cite[Conjecture 6.1]{Art89}. Let $\psi$ be a local Arthur parameter of $G$. Then for any 
$\pi \in \Pi_{\psi}$, we have
\begin{align} \label{eq main conj}
\phi_{\pi} \geq_C \phi_{\psi},   
\end{align}
Where $\geq_C$ is the closure ordering defined on the Vogan variety (see Definition \ref{def C ordering}).
\end{conj}

Taking Conjecture \ref{main conj} into account, towards Question \ref{que intro}, what remains is that for those $\phi \geq_C \phi_{\psi}$ in the Vogan variety, to determine which one satisfies the property that $\Pi_{\phi} \cap \Pi_{\psi} \neq \emptyset$. This is expected to be a hard question. 

We remark that the theory of local ABV packets over non-Archimedean local fields has recently been developed and studied by Cunningham, Fiori, Moussaoui, Mracek, and Xu in \cite{CFMMX22}. In particular, they prove the ABV analogue of Conjecture \ref{main conj} (see \cite[Proposition 7.10]{CFMMX22}) using geometric methods, namely, given any local ABV packet $\Pi_{\phi}^{ABV}$, for any $\pi \in \Pi_{\phi}^{ABV}$, $\phi_{\pi} \geq_C \phi$. 
The Vogan conjecture (see \cite[Section 8.3, Conjecture 1]{CFMMX22}), which is still open currently, states that if $\phi$ is of Arthur type, say $\phi=\phi_{\psi}$, then the corresponding local ABV packet will coincide with the local Arthur packet $\Pi_{\psi}$. In this paper, we prove Conjecture \ref{main conj} for symplectic and split odd special orthogonal groups, applying the intersection theory of local Arthur packets developed in \cite{HLL22}. This provides evidence for the Vogan conjecture in these cases. 

% \begin{thm}\label{thm main intro}
% Conjecture \ref{main conj} is true for $\RG_n=\Sp_{2n}$ or split $\SO_{2n+1}$. 
% \end{thm}

% Theorem \ref{thm main intro} follows directly from the following theorem. 

\begin{thm}[Theorem \ref{thm three orderings} and \S\ref{sec proof}]\label{thm max intro}
Let $\RG_n$ be $\Sp_{2n}$ or split $\SO_{2n+1}$. If $\pi\in\Pi_A(G_n)$, then we have
\begin{enumerate}
    \item  for any $\psi \in \Psi(\pi):=\{\psi\ |\ \pi \in \Pi_{\psi}\}$,
    \[  \phi_{\psi^{max}(\pi)} \geq_C \phi_{\psi}; \]
    \item \[ \phi_{\pi} \geq_C \phi_{\psi^{max}(\pi)}. \]
\end{enumerate}
Hence, Conjecture \ref{main conj} is true for $\RG_n=\Sp_{2n}$ or split $\SO_{2n+1}$. 
    Here, $\psi^{max}(\pi)$ is ``the" local Arthur parameter of $\pi$, introduced in \cite{HLL22} (see Definition \ref{def max min}).
\end{thm}

Theorem \ref{thm max intro} Part (1) is included in Theorem \ref{thm three orderings}, which is about studying the structure of the set $\Psi(\pi)$.
We remark that there are five orderings on $\Psi(\pi)$:
\begin{enumerate}
    \item Operator ordering $\geq_O$ (see \cite[Definition 11.3]{HLL22} and Definition \ref{def operator ordering});
    \item Deligne partition ordering $\geq_D$ (see Definition \ref{def Dordering});
    \item Arthur partition ordering $\geq_A$ (see \cite[Definition 11.5]{HLL22} and Definition \ref{def Jordering});
    \item Normalized intertwining operators' zeros ordering $\geq_N$ (see \cite[Definition 11.11]{HLL22});
    \item Closure ordering $\geq_C$ (see Definition \ref{def C ordering}).
\end{enumerate}
Under these five orderings, we have the following result on extremal elements in $\Psi(\pi)$.

\begin{thm}[{Theorems \ref{thm max min intro}, \ref{thm three orderings}, \cite[\S 11.2]{HLL22}}]
\label{thm five ordering}
    For any $\pi \in \Pi_{A}(G_n)$, 
under any of these five orderings $\geq_X$ on $\Psi(\pi)$, $\psi^{max}(\pi)$ and $ \psi^{min}(\pi)$ (see Definition \ref{def max min}) are the unique elements in $\Psi(\pi)$ such that for any $\psi \in \Psi(\pi)$ the following inequality holds
\begin{equation}\label{clo rel for max}
    \psi^{max}(\pi) \geq_X \psi \geq_X \psi^{min}(\pi). 
\end{equation} 

\end{thm}

The proof of Theorem \ref{thm five ordering}, which implies Part (1) of Theorem \ref{thm max intro}, relies on certain operators
introduced in \cite{HLL22} (see \S \ref{sec Operators on extended multi-segments} and Definition \ref{def operators on parameters}).

Theorem \ref{thm max intro} Part (2) is proved by induction on the rank of $G_n$ and one ingredient is an important property of $\psi^{max}(\pi)$  (Proposition \ref{prop max triangle}) which implies that the local $L$-parameter of $\pi$ and $\phi_{\psi^{max}(\pi)}$ share certain common direct summands. 

We summarize that the key ingredients in the proof of Conjecture \ref{main conj} above are the reduction to $\psi^{max}(\pi)$ and the operators introduced in \cite{HLL22} on the intersection problem of local Arthur packets.

Note that a particular feature of the closure ordering $\geq_C$ is that it can be defined for general connected reductive group $\RG$. Suppose that there is a theory of local Arthur packets for $ G=\RG(F)$ and let $\pi \in \Pi_{A}(G)$, we conjecture below that there are also unique elements $\psi^{max}(\pi)$ and $ \psi^{min}(\pi)$ satisfying the inequality \eqref{clo rel for max}. This would give a generalization of the definitions of $\psi^{max}(\pi)$ and $ \psi^{min}(\pi)$.

\begin{conj}\label{conj max intro}
Let $\RG$ be a connected reductive group defined over a non-Archimedean local field. Assume that there is a local Arthur packets theory for $G=\RG(F)$ as conjectured in \cite[Conjecture 6.1]{Art89}. Let $\pi\in\Pi_A(G)$. Then, for any $\psi_1,\psi_2\in\Psi(\pi)$, we have $\lambda_{\phi_{\psi_1}}=\lambda_{\phi_{\psi_2}}.$ Furthermore, there are unique elements $\psi^{max}(\pi)$ and $\psi^{min}(\pi)$ in $\Psi(\pi)$ such that
\begin{align*}
    \psi^{max}(\pi) \geq_C \psi \geq_C \psi^{min}(\pi),
\end{align*} 
for any local Arthur parameter $\psi \in \Psi(\pi)$. 
\end{conj}

A further interesting question here is to study the structure of non-extremal elements in $\Psi(\pi)$, for example, under the closure ordering $\geq_C$.

The famous Shahidi conjecture states that 
for any quasi-split reductive group $\RG$, tempered $L$-packets have generic members (\cite[Conjecture 9.4]{Sha90}). This conjecture has been generalized by Jiang (Jiang conjecture, \cite{Jia14}, see also \cite[Conjecture 1.6]{LS22}) to consider the upper bound of wavefront sets of representations in local Arthur packets. In this paper, as an application, we show that 
under certain assumptions, Conjecture \ref{main conj} implies one direction of the enhanced Shahidi conjecture as follows. 

\begin{conj}[{\cite[Conjecture 1.5]{LS22}}, Enhanced Shahidi Conjecture]\label{Enhanced Shahidi conjecture intro}
For any quasi-split reductive group $\RG$, suppose that there is a theory of local Arthur packets for $G=\RG(F)$ as conjectured in \cite[Conjecture 6.1]{Art89}. Then, a local Arthur packet is generic if and only if it has a generic member.
\end{conj}

In \cite{LS22}, the second-named author and Shahidi proved Conjecture \ref{Enhanced Shahidi conjecture intro} for quasi-split classical groups, with certain assumption.  
In \cite{HLL22}, the first three-named authors proved Conjecture \ref{Enhanced Shahidi conjecture intro} for symplectic and split odd special orthogonal groups, without any assumption. 

For any quasi-split connected reductive group $\RG$, suppose that there is a theory of local Arthur packets for $G=\RG(F)$ as conjectured in \cite[Conjecture 6.1]{Art89}, we show that Conjecture \ref{main conj}, along with Working Hypotheses \ref{assumption}, implies that a local Arthur packet of $G$ is tempered if it has a generic member, which is an essential part of the enhanced Shahidi conjecture (see Theorem \ref{proof of enhanced shahidi}). Then, as an application of Theorem \ref{thm max intro}, we verify these assumptions for symplectic and split odd special orthogonal groups (see Theorem \ref{dual packets} and Lemma \ref{dual generic}), hence giving a new proof of the enhanced Shahidi conjecture \ref{Enhanced Shahidi conjecture intro} in these cases (see Theorem \ref{new proof of enhanced Shahidi conjecture}). The merit of this new proof is that it provides a framework of proving the enhanced Shahidi conjecture for general quasi-split connected reductive groups.

Conjecture \ref{main conj} reveals a very interesting geometric nature of local Arthur packets and the closure ordering result in Theorem \ref{thm max intro} has also been applied to many other problems related to local Arthur packets. We name a few as follows.

\begin{enumerate}
    \item In \cite{CHLLRZ24}, jointly with Cunningham and Ray, we are
    working toward proving the Vogan conjecture on the equalities of local Arthur packets and local ABV packets of split odd orthogonal groups. The closure ordering result in Theorem \ref{thm max intro} plays an important role in this work. Among others, we apply it to gave a nice characterization of local Arthur packets corresponding to simple local Arthur parameters $\psi =  \phi \otimes S_{a} \otimes S_{b}$. More precisely, $\pi \in \Pi_{\psi}$ if and only if $\phi_{\pi} \geq_C \phi_{\psi}$ and $\phi_{\widehat{\pi}} \geq_C \phi_{\widehat{\psi}}$. Here $\widehat{\psi}=\phi \otimes S_{b} \otimes S_{a}$, the dual of $\psi$ (see Definition \ref{def operators on parameters}).

\item 
Combining the recent work of Ciubotaru-Mason-Brown-Okada (\cite{CMO21, CMO22, CMO23}) and Waldspurger (\cite{Wal18}), applying Theorem \ref{thm max intro}, jointly with Shahidi, the first three-named authors showed that the Jiang conjecture holds for any local Arthur parameter $\psi$ of $\Sp_{2n}(F)$ or split $\SO_{2n+1}(F)$ which is trivial on $W_F$ (see in \cite[\S 7]{HLLS23}).

\item In \cite{HLLS23}, jointly with Shahidi, the first three-named authors have been studying the upper bound of wavefront sets of irreducible admissible representations of connected reductive groups. We formulated a new conjecture on the upper bound and showed that it can be reduced to that of anti-discrete representations, namely, those whose Aubert-Zelevinsky involution are discrete series. In particular, assuming Conjecture \ref{main conj}, we showed that this new conjecture is equivalent to the Jiang conjecture.
\end{enumerate}

It is known that local Arthur packets could have nontrivial intersections in general. On the other hand, it is an interesting question that whether a local Arthur packet can be fully contained in another one.
Barbasch and Trapa, M{\oe}glin and Renard (see \cite[Introduction and Theorem 12.5]{MR17}) showed that the answer is affirmative for classical groups over complex fields. In this paper, applying certain operators introduced in \cite{HLL22} (see \S \ref{sec Operators on extended multi-segments} and Definition \ref{def operators on parameters}) and a reduction to $\psi^{max}(\pi)$, we show that this can not happen for $\RG_n$ over non-Archimedean local fields. 

\begin{thm}[Theorem \ref{noncontainment}]\label{noncontainment intro}
Given any $\psi_1,\psi_2 \in \Psi^+(G_n)$. 
If $\Pi_{\psi_1} \supseteq \Pi_{\psi_2}$, then $\psi_1=\psi_2$.
\end{thm}

As a direct implication of Theorem \ref{noncontainment intro}, given a local Arthur packet $\Pi_{\psi}$, there are no proper subsets providing stable distributions which are compatible with endoscopic transfers (for the compatibility properties, see \cite[Theorem 2.2.1(a), (2.2.3) and (2.2.4)]{Art13}). This result has its own interests. 

We remark that the methods in this paper are expected to extend to other classical groups, once the corresponding analogous theory in \cite{HLL22} is developed. 

Following is the structure of this paper. In \S \ref{sec notation}, we recall necessary notation and preliminaries. 
In \S \ref{structure and orderings}, we introduce various orderings on the set $\Psi(\pi)$ and study its structure.
In \S \ref{sec operator}, we prove Theorem 
\ref{thm five ordering} for $C$ ordering, which implies Theorem \ref{thm max intro} Part (1). In \S \ref{sec proof}, we prove Theorem \ref{thm max intro} Part (2). 
In \S \ref{sec Shahidi}, we give our new proof of Conjecture \ref{Enhanced Shahidi conjecture intro} for symplectic and split odd special orthogonal groups.  In \S \ref{sec noncontainment}, we prove Theorem \ref{noncontainment intro}.

\subsection*{Acknowledgements} 

The authors would like to thank  Dihua Jiang and Freydoon Shahidi for the constant support and encouragement. The fourth-named author would like to thank Clifton Cunningham for introducing  his Voganish project to him, which is an inspiration of the topics in this paper, and for his constant support and encouragement. 
The authors would like to thank Hiraku Atobe for helpful comments and suggestions. The authors also would like to thank Bin Xu for helpful communications on the question of non-containment of local Arthur packets.

\section{Notation and preliminaries}\label{sec notation}
Let $F$ be a non-Archimedean local field of characteristic zero and let $q=q_F$ denote the cardinality of its residue field. Let $\RG_n$ denote one of the groups $\Sp_{2n}$ or $\SO_{2n+1}$ defined and split over $F$ and let $G_n=\RG_n(F)$. Let $|\cdot|$ denote the normalized absolute value of $F$ or the Weil group $W_F$. We also consider it as a character $\GL_n(F) \to \BC^{\times}$ by composing with the determinant. In this section, we introduce necessary notation and preliminaries.

\subsection{Langlands classification}
In this subsection, we recall the Langlands classification for $\GL_n(F)$ and $G_n.$ These follow from the Langlands classification for $p$-adic reductive groups. For a more general setup, we refer to \cite{Kon03}. 

Let $n$ be a positive integer and fix a Borel subgroup of $\GL_n$ to be the subgroup of upper triangular matrices. Let $P$ be a standard parabolic subgroup of $\GL_n(F)$ with Levi subgroup $M\cong \GL_{n_1}(F)\times\cdots\times\GL_{n_r}(F).$ Let $\tau_i\in \Pi(\GL_{n_i}(F))$ for $i=1,2,\dots,r.$ We set
$$
\tau_1\times\cdots\times\tau_r := \mathrm{Ind}_{P}^{\GL_n(F)}(\tau_1\otimes\cdots\otimes\tau_r)
$$
to be the normalized parabolic induction. Let $\rho$ be an irreducible unitary supercuspidal representation of $\GL_n(F).$ For $x,y\in\mathbb{R}$ such that $x-y$ is a non-negative integer, we define a \emph{segment}, denoted $[x,y]_\rho$, by
$$
[x,y]_\rho=\{\rho|\cdot|^x, \rho|\cdot|^{x-1},\dots,\rho|\cdot|^y\}.
$$
We denote the \emph{Steinberg representation} attached to the segment $[x,y]_\rho$ by $\Delta_\rho[x,y].$ This is the unique irreducible subrepresentation of 
\[\rho|\cdot|^x\times\cdots\times\rho|\cdot|^y.\]
It is an essentially discrete series representation of $\GL_{n(x-y+1)}(F).$ When it is clear in context, we refer to both $[x,y]_\rho$ and $\Delta_\rho[x,y]$ as segments. We also set $Z_\rho[y,x]$ to be the unique irreducible quotient of $\rho|\cdot|^x\times\cdots\times\rho|\cdot|^y.$ In the case $y=x+1,$ we set $\Delta_\rho[x,x+1]=Z_\rho[x+1,x]$ to be the trivial representation of $\GL_0(F).$

Now we state the Langlands classification for $\GL_n(F).$ Let $\tau\in \Pi(\GL_n(F)).$ Then $\tau$ can be realized as a unique irreducible subrepresentation of a standard representation
$$\Delta_{\rho_1}[x_1,y_1]\times\cdots\times\Delta_{\rho_r}[x_r,y_r],$$
where $\rho_i$ is an irreducible unitary supercuspidal representation of $\GL_{n_i}(F),$ $[x_i,y_i]_{\rho_i}$ is a segment, and $x_1+y_1\leq\cdots\leq x_r+y_r.$ The multi-set of segments $\{ [x_i,y_i] \}_{i=1,\dots, r}$ with above requirement is an invariant of $\tau$ that uniquely characterizes it. In this setting, we write
$$
\tau=L(\Delta_{\rho_1}[x_1,y_1],\dots,\Delta_{\rho_r}[x_r,y_r]).
$$

Next, fix an $F$-rational Borel subgroup of $G_n$ and let $P$ be a standard parabolic subgroup of $G_n$ with Levi subgroup $M\cong\GL_{n_1}(F)\times\cdots\times\GL_{n_r}(F)\times G_{m}.$ Let $\tau_i\in\Pi(\GL_{n_i}(F))$ for $i=1,2,\dots,r$ and $\sigma\in\Pi(G_{n_0})$. We set
$$
\tau_1\times\cdots\times\tau_r\rtimes\sigma := \mathrm{Ind}_{P}^{G_n}(\tau_1\otimes\cdots\otimes\tau_r\otimes\sigma)
$$
to be the normalized parabolic induction. 

The Langlands classification for $G_n$ states that any $\pi\in\Pi(G_n)$ can be realized as a unique irreducible subrepresentation of a standard representation
\[\Delta_{\rho_1}[x_1,y_1]\times\cdots\times\Delta_{\rho_r}[x_r,y_r]\rtimes\pi_0,\]
where $\rho_i$ is an irreducible unitary supercuspidal representation of $\GL_{n_i}(F),$ $x_1+y_1\leq\cdots\leq x_r+y_r<0,$ and $\pi_0$ is an irreducible tempered representation of $G_{n_0}.$  The multi-set of segments $\{ [x_i,y_i] \}_{i=1,\dots, r}$ and the tempered representation $\pi_0$ with above requirement are invariants of $\pi$ that uniquely characterize it. We denote this by
\begin{align}\label{eq L-data of pi 1}
    \pi=L(\Delta_{\rho_1}[x_1,y_1],\dots,\Delta_{\rho_r}[x_r,y_r];\pi_0)
\end{align}
and call $(\Delta_{\rho_1}[x_1,y_1],\dots,\Delta_{\rho_r}[x_r,y_r];\pi_0)$ the Langlands data, or $L$-data, of $\pi.$ 

Finally, to specify the tempered representation $\pi_0$ in each $L$-data, we recall Arthur's classification of tempered representations.

\begin{thm}[{\cite[Theorem 1.5.1]{Art13}}]\label{thm Arthur tempered}
Any irreducible tempered representation of $G_n$ lies in $\Pi_\psi$ for some tempered local Arthur parameter $\psi.$ Moreover, if $\psi_1 $ and $\psi_2$ are two non-isomorphic tempered local Arthur parameters, then 
$$
\Pi_{\psi_1}\cap\Pi_{\psi_2}=\emptyset.
$$
Finally, if one fixes a choice of Whittaker datum for $G_n$ and $\psi$ is tempered, then there is a bijective map between the tempered local Arthur packet $\Pi_{\psi}$ and $\widehat{\mathcal{S}}_\psi$, the Pontryagin dual of the component group
\[\mathcal{S}_\psi:= \pi_0(\textrm{Cent}(\textrm{im}(\psi), \widehat{G}_n) / Z(\widehat{G}_n)^{\Gamma}). \]
\end{thm}

Hereinafter, we implicitly fix a choice of Whittaker datum for $G_n.$ When $\psi$ is tempered and of good parity, we write $\pi(\psi,\varepsilon)$ or $\pi(\phi_\psi,\varepsilon)$ for the element of $\Pi_\psi$ corresponding to $\varepsilon\in\widehat{\mathcal{S}}_\psi$ via the bijection in Theorem \ref{thm Arthur tempered}.

As a consequence, for each irreducible representation $\pi$ of $G_n$, we may rewrite \eqref{eq L-data of pi 1} as
\begin{align*}
    \pi=L(\Delta_{\rho_1}[x_1,y_1],\dots,\Delta_{\rho_r}[x_r,y_r];\pi(\phi,\varepsilon)),
\end{align*}
where $\phi= \phi_{\psi}$ is a tempered $L$-parameter (thus $\psi$ is also tempered) and $\varepsilon$ is an irreducible representation of the component group $\mathcal{S}_{\psi}$, which is always a character in the case of $G_n$.

\subsection{Good parity}
In this subsection, we recall the good parity condition for local Arthur parameters and irreducible representations, and a result of M{\oe}glin (Theorem \ref{thm reduction to gp}) that reduces the construction of general local Arthur packets to the construction of local Arthur packets of good parity.

Recall from \eqref{lap} that a local Arthur parameter $\psi$ of $G_n$ is of the form
\begin{equation}\label{A-param decomp}
    \psi = \bigoplus_{i=1}^r \phi_i|\cdot|^{x_i} \otimes S_{a_i} \otimes S_{b_i}.
\end{equation}
By the Local Langlands Correspondence for general linear groups, we often identify the $d_i$-dimensional bounded irreducible representation $\phi_i$ of $W_F$ with an irreducible unitary supercuspidal representation $\rho_i$ of $\GL_{d_i}(F)$ (\cite{Hen00, HT01, Sch13}), and write $\rho_i|\cdot|^{x_i} \otimes S_{a_i} \otimes S_{b_i}$ instead of $\phi_i|\cdot|^{x_i} \otimes S_{a_i} \otimes S_{b_i}$.

If $\phi_i$ is self-dual, then it preserves a non-degenerate bilinear form which is either orthogonal or symplectic. We say $\phi_i$ is orthogonal or symplectic correspondingly. We say the summand $\phi_i|\cdot|^{x_i} \otimes S_{a_i} \otimes S_{b_i}$ is of \emph{good parity} if it is self-dual of the same type as $\psi$. More explicitly, this means $x_i=0$, $\phi_i$ is self-dual, 
\begin{itemize}
    \item if $G_n=\Sp_{2n}(F)$ and $\phi_i$ is orthogonal (resp. symplectic), then $a_i+b_i$ is even (resp. odd);
    \item if $G_n=\SO_{2n+1}(F)$ and $\phi_i$ is orthogonal (resp. symplectic), then $a_i+b_i$ is odd, (resp. even).
\end{itemize}
We say $\psi$ is of \emph{good parity} if every irreducible summand in the decomposition \eqref{A-param decomp} is of good parity.

Recall that the local Arthur parameter $\psi$ has image in $\widehat{G}_n$. Therefore, if a summand $\phi_i|\cdot|^{x_i} \otimes S_{a_i} \otimes S_{b_i}$ is not of good parity, then there exists $j \in \{1,\dots, r\} \setminus\{i\}$ such that
\[ \phi_j|\cdot|^{x_j} \otimes S_{a_j} \otimes S_{b_j} \cong (\phi_i|\cdot|^{x_i} \otimes S_{a_i} \otimes S_{b_i} )^{\vee}. \]
As a consequence, we may decompose 
\begin{align}\label{eq decomp of A-par gp}
     \psi= \psi_1 \oplus \psi_0 \oplus \psi_{1}^{\vee},
\end{align}
where $\psi_0$ is of good parity and any irreducible summand in $\psi_1$ is not of good parity. Note that the choice of $\psi_1$ is not unique, while $\psi_0$ is uniquely determined by $\psi$.

For any choice of $\psi_1$, write
\[ \psi_1= \bigoplus_{i=1}^s \phi_i|\cdot|^{x_i} \otimes S_{a_i} \otimes S_{b_i}. \]
We associate an irreducible representation of general linear group by 
$$
\tau_{\psi_1}:=\bigtimes_{i=1}^s L\left(
\Delta_{\rho_i}\left[\frac{a_i-b_i}{2}+x_i, \frac{a_i+b_i}{2}-1+x_i\right], \dots, \Delta_{\rho_i}\left[
\frac{-a_i-b_i}{2}+1+x_i, \frac{b_i-a_i}{2}+x_i\right]
\right).
$$
The following result of M{\oe}glin reduces the construction of $\Pi_{\psi}$ to the construction of $\Pi_{\psi_0}$ of good parity.

\begin{thm}[{\cite[Proposition 5.1]{Moe11b}}]\label{thm reduction to gp}
With above notation, for any $\pi\in\Pi_{\psi_0}$, the induced representation $\tau_{\psi_1}\rtimes\pi$ is irreducible, independent of the choice of $\psi_1$, and
$$
\Pi_\psi=\{\tau_{\psi_1}\rtimes\pi \, | \, \pi\in\Pi_{\psi_0}\}.
$$
\end{thm}

Next, we define the \emph{good parity} condition for irreducible representations.

\begin{defn}\label{def good parity reps}
We say an irreducible representation
\[ \pi= L(\Delta_{\rho_1}[x_1,y_1],\dots,\Delta_{\rho_r}[x_r,y_r]; \pi(\phi,\varepsilon))\] 
of $G_n$ is of \emph{good parity} if 
\begin{enumerate}
    \item [$\oldbullet$] the $L$-parameter $\phi$ is of good parity. More precisely, the local Arthur parameter defined by $\psi(w,x,y)=\phi(w,x)$ is of good parity. Note that $\phi_\psi=\phi.$
    \item [$\oldbullet$] For $1 \leq i \leq r$, $x_i,y_i \in \half{1} \Z$ and $\rho_i \otimes S_{x_i-y_i+1}\otimes S_1 $ is of good parity.
\end{enumerate}
We define $\Pi^{gp}(G_n)$ to be the subset of $\Pi(G_n)$ which consists of representations of good parity, and define $\Pi_{A}^{gp}(G_n):= \Pi_{A}(G_n) \cap \Pi^{gp}(G_n).$ 
\end{defn}

By the explicit construction of local Arthur packets of good parity in \cite{Ato20b}, if $\psi$ is of good parity and $\pi \in \Pi_{\psi}$, then $\pi$ is also of good parity.

\subsection{Derivatives}\label{sec derivatives}
In this subsection, we recall the notation of derivatives considered in \cite{Jan14, AM23} and \cite{Ato20b}.

Let $\pi$ be a smooth representation of $G_n$ of finite length. We let $Jac_{P}(\pi)$ denote the Jacquet module of $\pi$ with respect to a parabolic subgroup $P$ of $G_n.$  We also denote the semisimplification of $Jac_{P}(\pi)$ by $[Jac_{P}(\pi)].$

\begin{defn}\ 
\begin{enumerate}
    \item [(1)]Let $P_d$ be a standard parabolic subgroup of $G_n$ with Levi subgroup isomorphic to $\GL_{d}(F)\times G_{n-d},$ $x\in\mathbb{R},$ and $\rho$ be an irreducible unitary self-dual supercuspidal representation of $\GL_d(F).$ We define the $\rho|\cdot|^x$-derivative of $\pi$, denoted $D_{\rho|\cdot|^x}(\pi),$ to be a semisimple representation satisfying
$$
[Jac_{P_d}(\pi)]=\rho|\cdot|^x\otimes D_{\rho|\cdot|^x}(\pi) + \sum_i \tau_i\otimes\pi_i,
$$
where the sum is over all irreducible representations $\tau_i$ of $\GL_d(F)$ such that $\tau_i\not\cong\rho|\cdot|^x.$
\item [(2)] We define $D_{\rho|\cdot|^x}^{(k)}(\pi)$ recursively by $D_{\rho|\cdot|^{x}}^{(1)}(\pi)=D_{\rho|\cdot|^{x}}(\pi)$ and
$$
D_{\rho|\cdot|^{x}}^{(k)}(\pi)=\frac{1}{k}D_{\rho|\cdot|^{x}}\circ D_{\rho|\cdot|^{x}}^{(k-1)}(\pi).
$$
\item [(3)] For a sequence of real numbers $\{x_1,\dots ,x_r\}$, we denote the composition of derivatives by
\[ D_{\rho|\cdot|^{x_1,\dots ,x_r}}(\pi):=D_{\rho|\cdot|^{x_r}} \circ \cdots \circ D_{\rho|\cdot|^{x_1}}(\pi).\]
\item [(4)] We say that $D_{\rho|\cdot|^{x}}^{(k)}(\pi)$ is the \emph{highest $\rho|\cdot|^{x}$-derivative} of $\pi$ if $D_{\rho|\cdot|^{x}}^{(k)}(\pi)\neq 0$, but $D_{\rho|\cdot|^{x}}^{(k+1)}(\pi)=0.$
\end{enumerate}
\end{defn}
We recall the following properties of derivatives.

\begin{thm}[{\cite[Lemma 3.1.3]{Jan14}, \cite[Proposition 6.1, Theorem 7.1]{AM23}}]\label{thm derivative-socle}
Let $\rho$ be an irreducible unitary self-dual supercuspidal representation of $\GL_d(F),$ $\pi\in \Pi(G_n),$ and $x\in\mathbb{R}\setminus\{0\}.$ Then the highest $\rho|\cdot|^{x}$-derivative of $\pi$, say $D_{\rho|\cdot|^{x}}^{(k)}(\pi),$ is irreducible. Moreover, the $L$-data of $D_{\rho|\cdot|^{x}}^{(k)}(\pi)$ can be explicitly computed from the $L$-data of $\pi$.
\end{thm}

The following lemma is useful to argue the non-vanishing of composition of non-highest derivatives.

\begin{lemma}[{\cite[Lemma 2.6]{HLL22}}]\label{lem Frobenius}
Suppose $\rho$ is a self-dual supercuspidal representation of $\GL_d(F)$, and $\pi$ is an irreducible representation of $G_{n+dt}$. There exists a representation $\sigma$ of $G_n$ and $x_1,\dots,x_t\in\mathbb{R}$ such that
\[ \pi \hookrightarrow \rho|\cdot|^{x_1} \times \cdots \times \rho|\cdot|^{x_t} \rtimes \sigma \]
if and only if
\[ D_{\rho|\cdot|^{x_1,\ldots,x_t}}( \pi) \neq 0. \]
In this case, if $\sigma$ has a unique irreducible subrepresentation $\sigma'$, then 
\[   D_{\rho|\cdot|^{x_1,\dots,x_t}}( \pi) \geq \sigma'\]
in the sense of Grothendieck group.
\end{lemma}

For $\pi\in\Pi^{gp}(G_n),$ we describe the possible changes of the segments part of the $L$-data of $\pi$ under the highest derivative $D_{\rho|\cdot|^{\alpha}}^{(k)}$ when $k=1$ in the following lemma.

\begin{lemma}\label{lem highest derivative of order 1}
Suppose 
\[ \pi= L(\Delta_{\rho_1}[x_1,-y_1],\dots ,\Delta_{\rho_f}[x_f,-y_f]; \pi(\phi,\varepsilon))\]
is a representation of good parity of $G_n$ and $\rho\otimes S_{2\alpha+1}$ is of the same type as $G_n$. Assume that $D_{\rho|\cdot|^{\alpha}}^{(1)}(\pi)$ is a highest derivative. Then the derivative $D_{\rho|\cdot|^{\alpha}}$ either leaves the multi-set 
\[\{\Delta_{\rho_1}[x_1,-y_1],\dots ,\Delta_{\rho_f}[x_f,-y_f]\}\]
of $\pi$ unchanged, or changes it in one of the following ways.
\begin{enumerate}
    \item [(i)] Replace $\Delta_{\rho_i}[x_i,-y_i]$ by $\Delta_{\rho_i}[x_i-1,-y_i]$ for some $i$ such that $\rho_i\cong \rho$ and $x_i=\alpha$.
     \item [(ii)] Replace $\Delta_{\rho_i}[x_i,-y_i]$ by $\Delta_{\rho_i}[x_i,-y_i+1]$ for some $i$ such that $\rho_i\cong \rho$ and $y_i=\alpha$.
      \item [(iii)] Insert $\Delta_{\rho}[\alpha-1,-\alpha]$.
\end{enumerate}
\end{lemma}
\begin{proof}
This follows from the explicit formula in \cite[Theorem 7.1]{AM23}.
\end{proof}

\subsection{Aubert-Zelevinsky involution}

Let $\RG$ be a connected reductive group defined over $F$. Let $\pi$ be an irreducible representation of $G=\RG(F)$. In \cite{Aub95}, Aubert showed that there exists $\varepsilon\in\{\pm 1\}$ such that
\begin{align}\label{eq A-Z dual}
\widehat{\pi}:=\varepsilon\sum_P (-1)^{\mathrm{dim}(A_P)}[\mathrm{Ind}_{P}^{G}(\textrm{Jac}_P(\pi))]
\end{align}
gives an irreducible representation. Here the sum is over all standard parabolic subgroups $P$ of $G$ and $A_P$ is the maximal split torus of the center of the Levi subgroup of $P.$ We say $\widehat{\pi}$ is the Aubert-Zelevinsky dual or Aubert-Zelevinsky involution of $\pi.$ 

For $\Sp_{2n}(F)$ and split $\SO_{2n+1}(F)$, Atobe and M{\'i}nguez showed that the Aubert-Zelevinsky involution is compatible with derivatives (\cite[Proposition 3.9]{AM23}), and they give an algorithm to compute the involution in terms of the $L$-data (\cite[Algorithm 4.1]{AM23}).

\section{\texorpdfstring{Structure of $\Psi(\pi)$}{}}\label{structure and orderings}

Let $\pi$ be an irreducible representation of $G_n$. In this section, we recall the structure of the set
\[ \Psi(\pi) := \{ \psi \in \Psi^+(G_n)\ | \ \pi \in \Pi_{\psi}  \}\]
studied in \cite{HLL22}, and give the definition of the four orderings $\geq_{O}, \geq_A, \geq_C, \geq_D$ on $\Psi(\pi)$. Especially, we recall the definition of $\psi^{max}(\pi), \psi^{min}(\pi) \in \Psi(\pi)$ (Definition \ref{def max min}) and their characterization properties via these orderings (Theorems \ref{thm max min intro}, \ref{thm three orderings}).

\subsection{\texorpdfstring{Extended multi-segments and the operator ordering $\geq_O$}{}}\label{sec Operators on extended multi-segments}

In this subsection, we recall the definition of extended multi-segments and various operators on them. Then we recall the construction of $\Psi(\pi)$ given in \cite{HLL22} and \cite{Ato23} and its structure studied in \cite{HLL22}, especially the distinguished members $\psi^{max}(\pi)$ and $\psi^{min}(\pi)$. Finally, we recall definition of the operator ordering $\geq_O$.

First, we recall the definition of extended multi-segments.

\begin{defn} [{\cite[Definition 3.1]{Ato20b}}]
(Extended multi-segments)\label{def multi-segment}
\begin{enumerate}
\item
An \emph{extended segment} is a triple $([A,B]_\rho, l, \eta)$,
where
\begin{itemize}
\item
$[A,B]_\rho = \{\rho|\cdot|^A, \rho|\cdot|^{A-1}, \dots, \rho|\cdot|^B \}$ is a segment 
for an irreducible unitary supercuspidal representation $\rho$ of some $\GL_d(F)$; 
\item
$l \in \Z$ with $0 \leq l \leq \frac{b}{2}$, where $b = \#[A,B]_\rho = A-B+1$; 
\item
$\eta \in \{\pm1\}$. 
\end{itemize}
\item Consider a multi-set of extended segments of the form $$\{([A_i,B_i]_{\rho},l_i,\eta_i)\}_{i \in I_{\rho}}.$$
We say a total order $>$ on $I_{\rho}$ is admissible (or satisfies (P)) if
\[ A_i< A_j, B_i< B_j\Longrightarrow i<j. \]
We say an admissible order $>$ satisfies (P') if
\[  B_i< B_j\Longrightarrow i<j. \]
\item
An \emph{extended multi-segment} for $G_n$ is 
an equivalence class (via the equivalence defined below) of multi-sets of extended segments 
\[
\EE = \cup_{\rho}\{ ([A_i,B_i]_{\rho}, l_i, \eta_i) \}_{i \in (I_\rho,>)}
\]
such that 
\begin{itemize}
\item
$I_\rho$ is a totally ordered finite set with a fixed total order $>$ satisfies (P);

\item
$A_i + B_i \geq 0$ for all $\rho$ and $i \in I_\rho$; 

\item
as a representation of $W_F \times \SL_2(\BC) \times \SL_2(\BC)$, 
\[
\psi_{\EE} = \bigoplus_\rho \bigoplus_{i \in I_\rho} \rho \otimes S_{a_i} \otimes S_{b_i} 
\]
where $(a_i, b_i) = (A_i+B_i+1, A_i-B_i+1)$,
is a local Arthur parameter for $G_n$ of good parity. We shall denote $\psi_{\EE}$ the local Arthur parameter associated with $\EE$. 
\item The sign condition
\begin{align*}
\prod_{\rho} \prod_{i \in I_\rho} (-1)^{[\frac{b_i}{2}]+l_i} \eta_i^{b_i} = 1
\end{align*}
holds.
\end{itemize}

\item
Two extended segments $([A,B]_\rho, l, \eta)$ and $([A',B']_{\rho'}, l', \eta')$ are \emph{weakly equivalent} 
if 
\begin{itemize}
\item
$[A,B]_\rho = [A',B']_{\rho'}$; 
\item
$l = l'$; and 
\item
$\eta = \eta'$ whenever $l = l' < \frac{b}{2}$. 
\end{itemize}
We say that two extended multi-segments 
$$\EE = \cup_{\rho}\{ ([A_i,B_i]_{\rho}, l_i, \eta_i) \}_{i \in (I_\rho,>)}$$
and 
$$\EE' = \cup_{\rho}\{ ([A'_i,B'_i]_{\rho}, l'_i, \eta'_i) \}_{i \in (I_\rho,>)}$$
are \emph{weakly equivalent}
if for any $\rho$ and $i \in I_\rho$, the extended segments $([A_i,B_i]_\rho, l_i, \eta_i)$ and $([A'_i,B'_i]_{\rho}, l'_i, \eta'_i)$ are weakly equivalent.
\item For each extended multi-segment $\EE$, we let $\pi(\EE)$ denote the representation associated with $\EE$ as in \cite[\S 3.2]{Ato20b} (or see \cite[Definition 3.4]{HLL22}). $\pi(\EE)$ is either irreducible or zero. We denote $\Rep$ the set of extended multi-segments that give nonzero representations, and $\Rep^{(P')}$ the subset of $\Rep$ consists of extended multi-segments whose total order on any $I_{\rho}$ satisfies (P').
\end{enumerate}
\end{defn}

Atobe showed that local Arthur packets of good parity can be constructed by extended multi-segments as follows.

\begin{thm}[{\cite[Theorem 3.4]{Ato20b}}]\label{thm Atobe's reformulation}
Suppose $\psi= \bigoplus_{\rho} \bigoplus_{i \in I_{\rho}} \rho \otimes S_{a_i} \otimes S_{b_i}$ is a local Arthur parameter of $G_n$ of good parity. Fix an admissible order $>$ on $I_{\rho}$ for each $\rho$ that satisfies ($P'$) if $\half{a_i-b_i}<0$ for some $i \in I_{\rho}$. Then
\[ \bigoplus_{\pi \in \Pi_{\psi}} \pi= \bigoplus_{\EE} \pi(\EE),\]
where $\EE$ runs over all extended multi-segments with $\psi_{\EE}=\psi$ and $\pi(\EE) \neq 0$, and the total orders are the ones fixed above.
\end{thm}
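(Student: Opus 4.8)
This statement is Atobe's reformulation (\cite[Theorem 3.4]{Ato20b}) of M{\oe}glin's explicit construction of local Arthur packets of good parity, so the plan is to recover it from M{\oe}glin's construction. First I would recall that M{\oe}glin parametrizes $\Pi_\psi$ by sign-and-multiplicity data $(\underline l, \underline\eta)$ attached to an admissible ordering of the summands $\rho\otimes S_{a_i}\otimes S_{b_i}$, each member $\pi_{\mathrm M}(\psi,\underline l,\underline\eta)$ being produced from a tempered representation by a sequence of parabolic inductions, socle-extractions and (partial) Jacquet modules; and that by the work of M{\oe}glin and Xu (\cite{Moe06a, Moe06b, Moe09a, Moe10, Moe11a, Xu17}) $\Pi_\psi$ is multiplicity free and this set is independent of the chosen ordering. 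The core of the argument then splits into: (i) a bijection between the extended multi-segments $\EE$ with $\psi_\EE=\psi$ and $\pi(\EE)\neq 0$ and M{\oe}glin's data, and (ii) a verification that Atobe's combinatorial recipe for $\pi(\EE)$ computes the Langlands data of the corresponding $\pi_{\mathrm M}$.

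For step (ii) I would induct on the ``size'' of $\psi$, e.g. on $\sum_{\rho, i} A_i$, which measures the number of non-tempered boxes. In the base case all $b_i=1$, so $\psi$ is tempered, every $l_i$ is forced to equal $0$, and the statement becomes Arthur's tempered classification (Theorem~\ref{thm Arthur tempered}); here the sign condition $\prod\eta_i=1$ and the weak-equivalence relation together encode exactly the passage from $\mathcal A_\psi$ to $\mathcal S_\psi$, the vanishing $\pi(\EE)=0$ accounting for the repeated summands. For the inductive step I would fix an admissible order (imposing (P') once some $B_i<0$), isolate the extremal summand, and peel off a segment: M{\oe}glin's construction expresses $\pi_{\mathrm M}(\psi,\underline l,\underline\eta)$ either as $\mathrm{socle}\big(\Delta_\rho[A,B]\rtimes\pi_{\mathrm M}(\psi^-,\dots)\big)$ or via a Zelevinsky-type operation $Z_\rho[\dots]\rtimes(\cdot)$ for a parameter $\psi^-$ with one box shrunk. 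Using Theorem~\ref{thm derivative-socle} and Lemma~\ref{lem Frobenius} I can then read off the change in $L$-data and match it, box by box, against Atobe's recipe; the moves (i)--(iii) of Lemma~\ref{lem highest derivative of order 1} are precisely the elementary changes occurring here.

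It then remains to remove the dependence on the admissible order and to establish bijectivity. I would show that two admissible orders differing by transposing two adjacent indices yield the same $\pi(\EE)$: when $\half{a_i-b_i}\ge 0$ this is a commutation relation among the relevant induction and Jacquet functors, and when $\half{a_i-b_i}<0$ for some index it is exactly what condition (P') secures (it excludes the orderings for which the naive recipe would be ill-behaved), following Xu \cite{Xu17}. Combining this with the already-recorded fact that each $\pi(\EE)$ is irreducible or zero, with the injectivity of $\EE\mapsto\pi(\EE)$ on nonzero classes (again by comparison with M{\oe}glin's parametrization), and with a cardinality count against $|\Pi_\psi|$, gives $\bigoplus_\EE \pi(\EE)=\bigoplus_{\pi\in\Pi_\psi}\pi$.

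The main obstacle is step (ii): tracing the Langlands data through M{\oe}glin's construction is delicate because her operations interleave parabolic induction, Jacquet modules, socles and non-standard intertwining operators, and the combinatorics is genuinely intricate in the ``negative-box'' regime $A_i+B_i<0$ (equivalently $\half{a_i-b_i}<0$) --- which is why the (P') hypothesis appears and why the equivalence on extended segments remembers $\eta$ only when $l<\tfrac b2$. Pinning down the precise commutation and compatibility lemmas that drive the induction, and checking their consistency with the (P') constraint, is where essentially all the work lies.
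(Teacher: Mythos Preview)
The paper does not prove this theorem; it is quoted verbatim as a result of Atobe \cite[Theorem 3.4]{Ato20b} and used as a black box. There is therefore no proof in the paper to compare your proposal against.

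That said, your sketch is in the right spirit for how Atobe's original argument proceeds, but it conflates two distinct things. Atobe does not prove the theorem by directly tracking Langlands data through M{\oe}glin's construction and then matching it box-by-box against his combinatorial recipe. Rather, he \emph{defines} $\pi(\EE)$ by a recursive procedure (shift the segments far apart so the parameter becomes of ``discrete diagonal restriction'' type, read off a tempered representation, then pull back via a sequence of highest derivatives), and shows that this recursion is the same recursion M{\oe}glin's representations satisfy. The equality $\pi(\EE)=\pi_{\mathrm M}(\psi,\underline l,\underline\eta)$ is thus proved by showing both sides obey identical reduction rules and agree at the tempered base, not by an independent computation of the $L$-data of either side. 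Your step~(ii), as written, would require computing $L$-data of M{\oe}glin's representations directly, which is exactly the problem Atobe's reformulation is designed to \emph{avoid}; the explicit $L$-data formula for $\pi(\EE)$ is a consequence of the reformulation, not an input to its proof. The independence-of-order and the role of (P') are handled in Atobe's paper via the row-exchange operator $R_k$ rather than by commutation arguments on induction/Jacquet functors.
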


In this paper, we use the following operators on extended multi-segments. We omit their precise definitions, which can be found in \cite{Ato20b} and \cite{HLL22}. Instead, we describe their effect on the local Arthur parameters explicitly in Definition \ref{def operators on parameters} below.
\begin{enumerate}
    \item [$\oldbullet$] Row exchange: $R_k$ (\cite[\S4.2]{Ato20b} or \cite[Definition 3.14]{HLL22} ).
    \item [$\oldbullet$] Union-intersection: $ui_{i,j}$ (\cite[\S5.2]{Ato20b} or \cite[Definitions 3.22, 5.1]{HLL22} ).
    \item [$\oldbullet$] Aubert-Zelevinsky dual: $dual$ (\cite[Definition 6.1]{Ato20b} or \cite[Definition 3.27]{HLL22}).
    \item [$\oldbullet$] Partial dual: $dual_k^{-}, dual_k^{+}$  (\cite[Definition 6.5]{HLL22}).
\end{enumerate}

We recall the definition of the operators $sh_j^{d}, add_j^{d}$ on extended multi-segments that will be used in the computation.

\begin{defn}(shift, add)\\
Let $\EE = \cup_{\rho}\{ ([A_i,B_i]_{\rho}, l_i, \eta_i) \}_{i \in (I_\rho,>)}$ be an extended multi-segment. For $j \in I_{\rho'}$ and $d \in \Z$, we define the following operators. It is immediate that the operators commute with each other and so we denote the composition by summation. 

\begin{enumerate}
    \item [1.] $sh_j^{d}(\EE)= \cup_{\rho}\{ ([A_i',B_i']_{\rho}, l_i, \eta_i) \}_{i \in (I_\rho,>)}$ with 
    \[ [A_i',B_i']_{\rho}= \begin{cases}
    [A_i+d,B_i+d]_{\rho} & \text{ if }\rho=\rho' \text{ and } i = j,\\
     [A_i,B_i]_{\rho} & \text{ otherwise, }\end{cases} \]
    and $sh^d_{\rho'}=\sum_{j\in I_{\rho'}} sh_j^{d}$.
     \item [2.] $add_j^{d}(\EE)= \cup_{\rho}\{ ([A_i',B_i']_{\rho}, l_i', \eta_i) \}_{i \in (I_\rho,>)}$ with 
    \[ ([A_i',B_i']_{\rho},l_i')= \begin{cases}
    ([A_i+d,B_i-d]_{\rho},l_i+d) & \text{ if }\rho=\rho' \text{ and } i = j,\\
     ([A_i,B_i]_{\rho},l_i) & \text{ otherwise, }\end{cases} 
    \]
    and $add^d_{\rho'}=\sum_{j\in I_{\rho'}} add_j^{d}$. We remove the extended segments in $add_j^{d}(\EE)$ of the form $([A,A-1]_{\rho'},0,\ast)$.
\end{enumerate}
We only use these notations in the case that the resulting object is still an extended multi-segment.
\end{defn}

Here are some identities between these operators.
\begin{lemma}[{\cite[Lemma 3.30(ii), and Corollary 5.5]{HLL22}}]\label{lem equalities of operators}
Let 
$$\EE=\cup_{\rho} \{([A_i,B_i]_{\rho},l_i,\eta_i)\}_{i \in (I_\rho, >)}$$
be an extended multi-segment.
\begin{enumerate}
    \item For any $k \in I_{\rho}$ and $t \in \Z$, we have 
        \[dual \circ sh_{k}^t(\EE)= add_{k}^t \circ dual(\EE)\]
        if any side of the equation is still an extended multi-segment.
        \item Suppose $\EE \in \Rep^{(P')}$ and $ ui_{i,j}$ is applicable on $\EE$ not of type 3'. Then we have 
        \[ dual \circ ui_{j,i} \circ dual \circ ui_{i,j}(\EE)=\EE. \]
\end{enumerate}
\end{lemma}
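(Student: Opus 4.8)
The plan is to prove both identities by comparing the explicit combinatorial descriptions of the operators $dual$, $sh_k^t$, $add_k^t$ and $ui_{i,j}$ on extended multi-segments, using for part (2) the bijectivity of Atobe's parametrization (Theorem \ref{thm Atobe's reformulation}). The common starting observation is that on the underlying local Arthur parameter, $dual$ sends $\rho\otimes S_a\otimes S_b$ to $\rho\otimes S_b\otimes S_a$, which in endpoint coordinates reads $[A_i,B_i]_\rho\mapsto[A_i,-B_i]_\rho$: the left endpoint $A_i$ is fixed and $B_i$ is negated. Since $sh_k^t$ sends $[A_k,B_k]_\rho$ to $[A_k+t,B_k+t]_\rho$ (fixing $l_k,\eta_k$) and $add_k^t$ sends $([A_k,B_k]_\rho,l_k,\eta_k)$ to $([A_k+t,B_k-t]_\rho,l_k+t,\eta_k)$, both composites $dual\circ sh_k^t$ and $add_k^t\circ dual$ act on the $k$-th row by $[A_k,B_k]_\rho\mapsto[A_k+t,-B_k-t]_\rho$, fix every other row, and leave every $\eta$ unchanged. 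So for part (1) only the integer datum of the $k$-th row needs to be matched: on the $add_k^t\circ dual$ side it is $\widehat{l}_k+t$, where $\widehat{l}_k$ is the $l$-datum of the $k$-th row of $dual(\EE)$, whereas on the $dual\circ sh_k^t$ side it is the $l$-datum of the $k$-th row of $dual(sh_k^t(\EE))$. I would deduce their equality from the explicit formula for $dual$ (\cite[Definition 6.1]{Ato20b}, see also \cite{Xu17,HLL22}): that formula produces each row's dual $l$-datum from the multiset $\{[A_i,B_i]_\rho\}_{i\in I_\rho}$ together with the $l_i$, in a way equivariant under a diagonal translation of a single row by $t$, up to the corresponding $+t$ shift of that row's $l$-value, and leaving the other rows' dual $l$-data untouched. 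One reduces to $t=1$ by iteration — all intermediate shifts stay legal because $A_k+B_k\ge 0$ for a genuine extended multi-segment — and the case $t<0$ follows formally from $t=1$ and $dual^2=\mathrm{id}$; the hypothesis that one of the two sides is a genuine extended multi-segment guarantees that no boundary case is missed.

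For part (2), I would argue structurally, to avoid tracking the $l$ and $\eta$ data through the composition. Put $\EE'=ui_{i,j}(\EE)$ and $\EE''=dual\circ ui_{j,i}\circ dual(\EE')$. Because $ui_{i,j}$ is not of type 3', the index set $I_\rho$ is unchanged, and $ui_{i,j}$ leaves the $B$-coordinates of the two affected rows untouched, so $\EE'\in\Rep^{(P')}$ with the same ordered index set as $\EE$; likewise, after the (P')-resort, $\EE''\in\Rep^{(P')}$ with the same ordered index set as $\EE$ (these preservation statements are part of the setup of \cite{Ato20b,HLL22}). By Theorem \ref{thm E max}(1), both $ui_{i,j}$ and $dual\circ ui_{j,i}\circ dual$ preserve the represented class, so $\pi(\EE'')=\pi(\EE')=\pi(\EE)$. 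Next I would verify $\psi_{\EE''}=\psi_{\EE}$ at the parameter level: by Definition \ref{def operators on parameters}(2), $ui_{i,j}$ sends the endpoint pair $(A_i,B_i),(A_j,B_j)$ to $(A_j,B_i),(A_i,B_j)$; the operator $dual\circ ui_{j,i}\circ dual$ is then applicable on $\EE'$, since its applicability and "no interference" conditions from Remark \ref{rmk operator}(2) become, after negating the $B$-coordinates, exactly the conditions already imposed by the applicability of $ui_{i,j}$ on $\EE$ together with $A_i+B_i\ge 0$ — the latter inequality also excluding the degenerate (type 3') case — and by Remark \ref{rmk operator}(2) it sends $(A_j,B_i),(A_i,B_j)$ back to $(A_j,B_j),(A_i,B_i)$, so after the (P')-resort it restores the segments of $\EE$ row by row. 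Finally, by Theorem \ref{thm Atobe's reformulation} together with M{\oe}glin's multiplicity-one theorem for local Arthur packets, for a fixed good-parity local Arthur parameter and a fixed admissible order satisfying (P') the map $\EE\mapsto\pi(\EE)$ is injective on the set of $\EE$ with that parameter and order giving nonzero representations; since $\EE$ and $\EE''$ both lie in that set and $\pi(\EE)=\pi(\EE'')$, we conclude $\EE''=\EE$.

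I expect the main difficulty to lie in part (1), specifically in extracting from the (rather involved) algorithm defining $dual$ the precise shift-equivariance of the dual $l$-data under a diagonal translation of a single row. In part (2) the only subtle point is checking that $dual\circ ui_{j,i}\circ dual$ is applicable on $\EE'$, i.e. that the "no interference from intermediate rows" hypothesis built into $ui_{i,j}$ is inherited by $\EE'$; once that is in place, the injectivity of Atobe's parametrization does the rest. One could instead prove part (2) by directly computing the transformed $l$ and $\eta$ data from \cite{Ato20b}, but the structural route above keeps the bookkeeping to a minimum.
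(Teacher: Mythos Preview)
The paper does not prove this lemma; it is imported verbatim from \cite[Lemma~3.30(ii) and Corollary~5.5]{HLL22}, so there is no in-paper argument to compare against. I comment on your proposal on its own merits.

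For part (1), your outline is correct: the segment and sign data match by the endpoint calculation you give, and the only issue is the $l$-datum of the $k$-th row. You acknowledge that the real work is extracting from the explicit dual formula of \cite[Definition~6.1]{Ato20b} the shift-equivariance of the dual $l$-datum under $sh_k^1$, and that is indeed precisely what \cite[Lemma~3.30(ii)]{HLL22} does. So your plan for (1) is the same as the cited source, with the core verification deferred.

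For part (2), your injectivity idea is elegant and genuinely different from a direct case check, but there is a gap. You verify that $ui_{j,i}$ is applicable on $dual(\EE')$ only at the \emph{parameter} level, by invoking Remark~\ref{rmk operator}(2). However, applicability of $ui$ on an \emph{extended multi-segment} is strictly more restrictive: it depends on the $(l,\eta)$ data of the two rows involved (see \cite[\S5.2]{Ato20b} and \cite[Definition~5.1]{HLL22}), and Remark~\ref{rmk operator}(3) gives an explicit example where the parameter-level conditions hold but $ui$ is applicable on no extended multi-segment with that parameter. Until you check those $(l,\eta)$ conditions for $dual(\EE')$, the object $\EE''$ is not known to be defined, so the injectivity step never gets off the ground. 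Unfortunately, checking those conditions amounts to tracking the $(l,\eta)$ data through $ui_{i,j}$ and $dual$---exactly the direct computation you were trying to sidestep. A secondary issue: your appeal to Theorem~\ref{thm E max}(1) for $\pi(\EE'')=\pi(\EE)$ risks circularity, since that theorem is from \cite[\S6,\S11]{HLL22} and may already use Corollary~5.5; you should instead cite \cite[Theorems~5.2 and~6.2]{Ato20b} directly for the invariance of $\pi(\,\cdot\,)$ under $ui$ and under $dual$ (up to Aubert--Zelevinsky involution), which are logically prior.
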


One of the main results of \cite{HLL22} is the following theorem.

\begin{thm}[{\cite[\S 6]{HLL22}, \cite{Ato23}}]\label{thm operators extended multi-segment}
    Suppose $\pi\in \Pi_A^{gp}(G_n)$, and take $\EE \in \Rep^{(P')}$ such that $\pi=\pi(\EE)$. Then $\pi(\EE) \cong \pi(\EE')$ if and only if $\EE'$ can be obtained from $\EE$ by applying a sequence of operators involving $ R_k, ui_{i,j}, dual \circ ui_{j,i} \circ dual$ and $dual_k^{-}$ and their inverses.
\end{thm}

Remark that the statement in \cite{Ato23} is slightly different since he used different set of operators, but the results are logically equivalent. Note that when $\pi$ is of good parity, we have
\[ \Psi(\pi)= \{ \psi_{\EE} \ | \ \pi(\EE)= \pi \}.\]
Therefore, together with Theorem \ref{thm reduction to gp}, the above theorem gives a way to compute $\Psi(\pi)$. A more precise formula or algorithm can be found in \cite[Theorem 7.4]{HLL22}.

We shall often omit the operator $R_k$ since $\psi_{\EE}= \psi_{R_k(\EE)}$. In \cite[\S 10]{HLL22}, the first three-named authors observed that among the other six types of operators in Theorem \ref{thm operators extended multi-segment} (including the inverses)
, $ ui_{i,j}^{-1}$, $dual \circ ui_{j,i} \circ dual $ and $dual_k^{-}$ raise the ``temperedness" of local Arthur parameters under certain measurement of temperedness (see Theorem \ref{thm three orderings}(1) below). This observation leads to the following definition. 

\begin{defn}
We say an operator $T$ is a \emph{raising} operator if it is of the form $ ui_{i,j}^{-1}$, $dual \circ ui_{j,i} \circ dual,$ or $dual_k^{-}$.
\end{defn}

We say an extended multi-segment $\EE \in \Rep$ is \emph{absolutely maximal} if there are no raising operators applicable on $\EE$, and we say $\EE$ is \emph{absolutely minimal} if there are no inverse of raising operators applicable on $\EE$. Note that $\EE$ is absolutely minimal if and only if and only if $dual(\EE)$ is absolutely maximal. With the notation introduced so far, we summarize the other main results of \cite{HLL22} we need in the following theorem.

\begin{thm}[{\cite[ \S 11]{HLL22}}]\label{thm E max}
Suppose $\pi\in \Pi_A^{gp}(G_n)$ and take $\EE \in \Rep^{(P')}$ such that $\pi=\pi(\EE)$.
\begin{enumerate}
    \item There exists a unique absolutely maximal (minimal) member in the set \[\Psi(\EE):=\{\EE'\ | \ \pi(\EE')=\pi(\EE)\}/(\text{row exchanges}),\]
    which we denote by $\EE^{|max|}$ (resp. $\EE^{|min|}$).
    \item After row exchanges, there exists a sequence of raising operators $\{T_i\}_{i=1}^m$, $1 \leq r\leq m$ such that
    \begin{align*}
        \EE^{|max|}&=T_1 \circ \cdots \circ T_r(\EE),\\
        \EE^{|min|}&=T_m^{-1} \circ \cdots \circ T_{r+1}^{-1}(\EE).
    \end{align*}
\end{enumerate}
\end{thm}

From Part (1) of the above theorem, we give the following definition.

\begin{defn}\label{def max min}
Let $\pi\in \Pi_A(G_n)$. Write $\pi= \tau_{\psi_1} \rtimes \pi_0$ as in Theorem \ref{thm reduction to gp}, where $\pi_0$ is of good parity. Take any $\EE$ such that $\pi(\EE)=\pi_0$. Then we define
\begin{align*}
    \psi^{max}(\pi)&:= \psi_1  \oplus \psi_{\EE^{|max|}} \oplus  \psi_1^{\vee},\\
        \psi^{min}(\pi)&:= \psi_1  \oplus \psi_{\EE^{|min|}}+ \psi_1^{\vee}.
\end{align*}
\end{defn}

Intuitively, $\psi^{max}(\pi)$ (resp. $\psi^{min}(\pi)$) is the ``most tempered" (resp. ``least tempered") member in the set $\Psi(\pi)$. Also, Part (1) of the following proposition supports the intuition that $\psi^{max}(\pi)$ is ``closest" to the $L$-parameter $\phi_{\pi}$, and hence we may call $\psi^{max}(\pi)$ ``the" local Arthur parameter of $\pi$.

\begin{prop}[{\cite[\S 10]{HLL22}}]\label{properties of max and min}
Let $\pi \in \Pi_{A}(G_n)$.
\begin{enumerate}
    \item $\phi_\pi$ is of Arthur type if and only if $\pi \in \Pi_{\phi_{\psi^{max}(\pi)}}$.
    \item $\psi^{min}(\pi)= \widehat{\psi^{max}(\widehat{\pi})}$,
    where $\widehat{\pi}$ is the Aubert-Zelevinsky involution of $\pi$.
    \item  $\psi^{max}(\pi)= \psi^{min}(\pi)$ if and only if $\Psi(\pi)$ is a singleton.
\end{enumerate}
\end{prop}
Note that Theorem \ref{thm max intro} generalizes Part (1) of the above proposition.

Now we translate the definitions and results above in terms of local Arthur parameters. If $\psi_0$ is a local Arthur parameter of good parity, we often write
\begin{align}\label{eq local Arthur parameter}
    \psi_0 = \bigoplus_{\rho}\bigoplus_{i\in I_\rho} \rho \otimes S_{a_i} \otimes S_{b_i},
\end{align}
where the first sum runs over certain finite set of
irreducible unitary supercuspidal representations $\rho$ of $\GL_d(F)$, $d \in \mathbb{Z}_{\geq 1}$, and $I_\rho$ denotes an indexing set. When we are working with multiple local Arthur parameters, we denote the indexing set by $I_{\rho}(\psi_0)$.

\begin{defn}\label{def operators on parameters}
Suppose $\psi \in \Psi^+(G_n)$. Decompose $\psi= \psi_1 \oplus \psi_0 \oplus \psi_1^{\vee}$ as in \eqref{eq decomp of A-par gp} and write
\[ \psi_0=\bigoplus_{\rho} \bigoplus_{i \in I_{\rho}} \rho \otimes S_{a_i} \otimes S_{b_i}.  \]
Then for $i,j,k \in I_{\rho}$, we define the operators $dual$, $ui_{i,j}$ and $dual_k^{-}$ as follows. 
\begin{enumerate}
    \item Define $dual(\psi)=\widehat{\psi}$ to be the local Arthur parameter given by
    \begin{equation}\label{psi dual}
  \widehat{\psi}(w,x,y):= \psi(w,y,x).  
\end{equation}
Equivalently, $\widehat{\psi}$ can be obtained from $\psi$ by switching the $a_i$'s and $b_i$'s in the decomposition \eqref{lap}. We identify the index set $I_{\rho}(\psi_0)$ with $I_{\rho}(\widehat{\psi}_0)$ in the obvious way.
    \item For $r \in I_{\rho}$, let $A_r= \half{a_r+b_r}-1$ and $B_r= \half{a_r-b_r}$. Then we may rewrite the decomposition of $\psi_0$ as 
    \[ \psi_0= \bigoplus_{\rho} \bigoplus_{i \in I_\rho} \rho \otimes S_{A_i+B_i+1} \otimes S_{A_i-B_i+1}.\]
    The operator $ui_{i,j}$ is applicable on $\psi$ if the following conditions hold.
    \begin{enumerate}
    \item [$\oldbullet$] $A_j \geq A_i+1 \geq B_j >B_i.$
        \item  [$\oldbullet$] For any $r \in I_{\rho}$, if $B_i<B_r<B_j$, then $A_r \leq A_i $ or $A_r \geq A_j$.
    \end{enumerate}
    In this case, we define $ui_{i,j}(\psi_0)$ by replacing the summands 
    \[ \rho\otimes S_{A_i+B_i+1}\otimes S_{A_i-B_i+1} +\rho\otimes S_{A_j+B_j+1}\otimes S_{A_j-B_j+1} \]
of $\psi_0$ with
    \[\rho\otimes S_{A_j+B_i+1}\otimes S_{A_j-B_i+1} +\rho\otimes S_{A_i+B_j+1}\otimes S_{A_i-B_j+1}. \]
    If $A_i+1-B_j=0$, then we omit the last summand, and say this $ui_{i,j}$ is of type 3'. Finally, we define $ui_{i,j}(\psi)= \psi_1 \oplus ui_{i,j}(\psi_0) \oplus \psi_1^{\vee}$.
    \item The operator $dual_k^{-}$ is applicable on $\psi$ if $b_k=a_k+1$. In this case, we define $dual_k^{-}(\psi_0)$ by replacing the summand
    \[ \rho \otimes S_{a_k}\otimes S_{a_{k}+1}\]
    of $\psi_0$ with 
    \[ \rho \otimes S_{a_k+1}\otimes S_{a_{k}},\]
    and we define $dual_k^{-}(\psi)= \psi_1 \oplus dual_k^{-}(\psi_0) \oplus \psi_1^{\vee}$. 
    \item 
    Let $T$ be any of the operators above or their inverses. If $T$ is not applicable on $\psi$, then we define $T(\psi)=\psi$.
\end{enumerate}
\end{defn}

\begin{remark}\label{rmk operator}
\begin{enumerate}
    \item We say $ui_{i,j}^{-1}$ is applicable on $\psi$ if there exists $\psi'$ such that $ui_{i,j}(\psi')=\psi$, and we define $ui_{i,j}^{-1}(\psi)=\psi'$ in this case. Note that $i,j$ are indices of $I_{\rho}(\psi')$ but not of $I_{\rho}(\psi)$. However, there is an obvious way to identify $ I_{\rho}(\psi')$ with $I_{\rho}(\psi)$ or  $I_{\rho}(\psi) \sqcup \{j\}$ if $ui_{i,j}$ is of type 3'.
    \item The composition $dual \circ ui_{j,i} \circ dual=:T$ can be described as follows. Keep the notation in above definition. $T$ is applicable on $\psi$ only if
    \[A_i \geq A_j+1 \geq -B_i > -B_j.\]
    When it is applicable, it replaces the summands
    \[\rho\otimes S_{A_i+B_i+1}\otimes S_{A_i-B_i+1} +\rho\otimes S_{A_j+B_j+1}\otimes S_{A_j-B_j+1}\]
    of $\psi$ with
    \[\rho\otimes S_{A_i+B_j+1}\otimes S_{A_i-B_j+1}+\rho\otimes S_{A_j+B_i+1}\otimes S_{A_j-B_i+1}.\]
    If $ A_j+1=-B_i$, then we omit the last summand.
    \item Note that it is possible that $\Pi_{\psi} \cap \Pi_{T(\psi)}=\emptyset$. For example, let $\rho$ be a symplectic representation, $\psi_1=\rho\otimes S_1\otimes S_1 + \rho\otimes S_3\otimes S_1,$ and $\psi_2=\rho\otimes S_2\otimes S_2.$ Then $ui_{1,2}(\psi_1)=\psi_2.$ However, $\Pi_{\psi_1} \cap \Pi_{\psi_2}=\emptyset$, since $\Pi_{\psi_1}$ consists of 2 tempered representations while $\Pi_{\psi_2}$ consists of a single non-tempered representation. One way to see this is that $ui_{1,2}$ is not applicable on any extended multi-segment $\EE$ such that $\psi_\EE=\psi_1.$
\end{enumerate}
\end{remark}

With this definition, Theorem \ref{thm operators extended multi-segment} can be translated as follows.
 
\begin{thm}[{\cite[Theorem 1.4]{HLL22}}]\label{thm structure of Psi(pi) intro}
If $\pi\in\Pi_A(G_n)$ and $\psi_1, \psi_2\in \Psi(\pi),$ then there exists a sequence of operators $\{T_l\}_{l=1}^m$ such that
\[ \psi_1= T_1 \circ \cdots \circ T_m (\psi_2),\]
where each $T_l$ is one of the operators $ui_{i,j}$,  $ dual \circ ui_{j,i}\circ dual$, $dual_k^{-}$, or their inverses.
\end{thm}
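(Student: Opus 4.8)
The plan is to reduce the statement to a concrete combinatorial setting and then run an induction that "funnels" an arbitrary $\psi\in\Psi(\pi)$ toward a canonical representative. First I would translate each $\psi\in\Psi(\pi)$ into the language of extended multi-segments (in the sense of Atobe's reformulation of M\oe glin's construction), so that the representation $\pi$ corresponds, via the Langlands classification data, to a fixed finite collection of invariants — the supercuspidal supports, the exponents, and the characters of the component groups — that every $\psi\in\Psi(\pi)$ must reproduce. The operators $ui_{i,j}$, $dual\circ ui_{j,i}\circ dual$, $dual_k^-$ and their inverses are, by construction in \cite{HLL22}, precisely the moves on extended multi-segments that preserve the associated representation $\pi$ (this is the content I would cite from \S\ref{sec Operators on extended multi-segments}); so the real task is to show that the graph on $\Psi(\pi)$ whose edges are these operators is \emph{connected}.

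To prove connectivity I would argue by induction on a suitable complexity measure of $\psi$ — for instance, the total size $\sum_{i} b_i$ of the Arthur-$\SL_2$ blocks, or more precisely a lexicographic invariant recording how far $\psi$ is from having all its segments "minimal." The inductive step is: given $\psi_1,\psi_2\in\Psi(\pi)$, I produce a single target $\psi_0\in\Psi(\pi)$ (the one with smallest complexity, which I expect to be essentially unique and to be recognizable as the parameter whose extended multi-segment is in a normal/reduced form) and show separately that $\psi_1$ and $\psi_2$ can each be connected to $\psi_0$ by a sequence of the allowed operators. To connect an arbitrary $\psi$ to $\psi_0$, I would look at a block $\rho\otimes S_{a}\otimes S_b$ of $\psi$ that is not in reduced form and identify, from the list of three operator types, one whose inverse strictly decreases the complexity measure while staying inside $\Psi(\pi)$; iterating, $\psi$ is driven down to $\psi_0$. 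Here the operators of type (iii), $dual_k^-$, handle the "deformation of a single segment" moves, while types (i) and (ii) handle "merging/splitting and reordering of two segments," so together they should suffice to reach the reduced form — this is the heart of the combinatorics and is where I expect the argument to match the structure already set up in \cite{HLL22}.

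The main obstacle, and the step I would spend the most care on, is verifying that at every stage the complexity-reducing operator we wish to apply is actually \emph{applicable} — i.e. its domain conditions (inequalities among the $a_i$'s, $b_i$'s, and the positions of the exponents, plus the sign/character constraints coming from the component group) are met — and that the result still lies in $\Psi(\pi)$ rather than in some larger set of parameters whose packets merely contain a representation close to $\pi$. In other words, one must rule out "stuck" configurations where $\psi$ is not yet reduced but none of the six (three operators and three inverses) moves lowers the complexity while preserving $\pi$. I would handle this by a case analysis on the relative position of adjacent segments attached to the same $\rho$: in each case either one of the $ui$-type operators or $dual_k^-$ applies directly, or a short explicit sequence (e.g. an inverse of type (i) followed by type (iii)) does, and I would check compatibility with the character of the component group using the explicit formulas for how these operators act on it, as recorded in \cite{HLL22}. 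Once connectivity is established, the theorem follows immediately: concatenate the sequence from $\psi_1$ down to $\psi_0$ with the reverse of the sequence from $\psi_2$ down to $\psi_0$, reading the latter as a sequence of inverse operators, to obtain the desired $\psi_1 = T_1\circ\cdots\circ T_m(\psi_2)$.
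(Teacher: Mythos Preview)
The paper does not give its own proof of this statement; it is quoted from \cite[Theorem 1.4]{HLL22}, and the present paper only records the extended multi-segment refinement in Theorem~\ref{thm E max}. So there is no in-paper argument to compare line by line against your sketch.

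That said, what the paper does record (Theorem~\ref{thm E max}) lets one see that your plan is essentially the right one and matches the strategy of \cite{HLL22}. The translation to extended multi-segments is exactly how the argument proceeds; the ``canonical representative $\psi_0$'' you posit exists and is unique --- it is $\psi^{max}(\pi)$ (equivalently $\EE^{|max|}$), characterized as the unique element of $\Psi(\pi)$ on which no raising operator is applicable --- and connectivity is obtained by driving any $\EE$ up to $\EE^{|max|}$ via raising operators (Theorem~\ref{thm E max}(3)), then concatenating. Your ``complexity measure'' is realized concretely there as the partial order $\geq_O$, or equivalently via the partitions $\underline{p}^A$, $\underline{p}^D$ that the raising operators strictly decrease.

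Two small points where your sketch is looser than what actually happens. First, at the level of extended multi-segments the list of moves must also include the row exchanges $R_k$ (see Theorem~\ref{thm E max}(1)); these are invisible on the underlying local Arthur parameter $\psi_{\EE}$, which is why they do not appear in the theorem statement, but they are needed to put $\EE$ into a form where the next $ui$ or $dual^-$ move is literally applicable. Second, your worry about ``stuck'' configurations is exactly the nontrivial content: in \cite{HLL22} this is handled not by an ad hoc case analysis but by proving that an $\EE$ with $\pi(\EE)\neq 0$ on which no raising operator applies is already in the canonical (absolutely maximal) form, and that this form is unique; that uniqueness is what makes the funneling argument terminate at a common target.
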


For $\pi\in \Pi_{A}(G_n)$, the raising operators induce the \emph{operator ordering} $\geq_O$ on $\Psi(\pi).$

\begin{defn}\label{def operator ordering}
We define a partial order $\geq_{O}$ on $\Psi^+(G_n)$ by $\psi_1 \geq_{O} \psi_2$ if $\psi_1=\psi_2$ or there exists a sequence of raising operators $\{T_l\}_{l=1}^m$ such that
\[ \psi_1= T_1 \circ \cdots \circ T_m(\psi_2).\]
\end{defn}

We shall often restrict this ordering to $\Psi(\pi)$ where $\pi \in \Pi_{A}(G_n)$. With this definition, Theorem \ref{thm E max} can be rephrased as follows.

\begin{thm}[{\cite[\S 11]{HLL22}}]\label{thm max min intro}

Let $\RG_n$ be $\Sp_{2n}$ or split $\SO_{2n+1}$ and $\pi \in \Pi_{A}(G_n)$. The parameters $\psi^{max}(\pi)$ and $ \psi^{min}(\pi)$ are the unique elements in $\Psi(\pi)$ such that for any $\psi \in \Psi(\pi)$, the following inequality holds
\begin{equation*}
    \psi^{max}(\pi) \geq_O \psi \geq_O \psi^{min}(\pi).
\end{equation*}  
\end{thm}

In the following subsections, we define $X$ orderings $\geq_X$  on $\Psi^+(G_n)$, where $X \in \{A,C,D\}$. These orderings measure the ``temperedness" in the sense that $\psi$ is a maximal element in $\Psi(G_n)_{\lambda_{\phi_\psi}}$ (see \eqref{eq psi lambda}) with respect to these orderings if and only if $\psi$ is tempered. The following analogue of Theorem \ref{thm max min intro} holds for these orderings, which supports the intuition that $\psi^{max}(\pi)$ (resp. $\psi^{min}(\pi)$) is the ``most tempered" (resp. ``least tempered") member in the set $\Psi(\pi)$.

\begin{thm}\label{thm three orderings}
Let $X\in \{A,C,D\}$. 
\begin{enumerate}
    \item For any $\psi \in \Psi^+(G_n)$ and any raising operator $T$ applicable on $\psi$, we have 
    \[T(\psi) \gneq_X \psi.\]
    In particular, if $\psi \geq_{O} \psi'$, then $\psi \geq_X \psi'$.
    \item Let $\pi\in \Pi_{A}(G_n)$. The parameters $\psi^{max}(\pi)$ and $ \psi^{min}(\pi)$ are the unique elements in $\Psi(\pi)$ such that for any $\psi \in \Psi(\pi)$, the following inequality holds
    \begin{align}\label{clo rel for max 2}
        \psi^{max}(\pi) \geq_X \psi \geq_X \psi^{min}(\pi).
    \end{align}
\end{enumerate}
\end{thm}

Remark that Part (2) follows from Part (1) and Theorem \ref{thm max min intro}. The above theorem for $A$ ordering is proved in \cite[\S 11.1]{HLL22}. We shall prove the theorem for $C$ ordering in \S \ref{sec computation for operators} and for $D$ ordering in \S \ref{sec noncontainment}. The relations among the four orderings are summarized in \S \ref{sec relations}.

\subsection{\texorpdfstring{Vogan variety and the closure ordering $\geq_C$}{}}
In this subsection, we recall the definition of Vogan variety and give the definition of the partial ordering $\geq_C$.

We consider connected reductive groups $\RG$ defined over $F$ and $G=\RG(F)$. Recall that $L$-group of $G$ is ${}^L G=\widehat{\RG}(\BC)\rtimes W_F$.
For $\phi\in \Phi(G)$, we associate a homomorphism 
$$\lambda_{\phi} : W_F \to {}^L G$$
by
\[ \lambda_{\phi}(w):= \phi\left(w,\begin{pmatrix}
|w|^{\half{1}} & \\ & |w|^{\half{-1}}
\end{pmatrix} \right), \]
which is an \emph{infinitesimal character} of $G$, i.e., it is a continuous section of the projection ${}^L G\rightarrow W_F$ and its image consists of semi-simple elements (see \cite[\S 4.1]{CFMMX22}). Conversely, for each infinitesimal character $\lambda$, denote
\[ \Phi(G)_{\lambda}:=\{\phi\in\Phi(G) \ | \ \lambda_{\phi}=\lambda\}. \]

Following \cite{Vog93}, we consider the sets
\begin{align*}
K_{\lambda}&:= \{g \in \widehat{\RG}(\BC) \ | \ \lambda(w)g= g\lambda(w), \forall w \in I_F \}, \\
H_{\lambda}&:= \{g \in \widehat{\RG}(\BC) \ | \ \lambda(w)g= g\lambda(w), \forall w \in W_F \},
\end{align*}
where $I_F$ is the inertia subgroup of $F.$ Note that by $g$, here we mean $(g,1)\in{}^L G.$ The \emph{Vogan variety} $V_{\lambda}$ for infinitesimal character $\lambda$ is defined by
\[ V_{\lambda}:= \{ x \in \mathrm{Lie}(K_\lambda) \ | \ \Ad(\lambda(w))x= |w|x, \forall w \in W_F\}, \]
where $\mathrm{Lie}(K_\lambda)$ is the Lie algebra of $K_\lambda$ ($K_\lambda$ is a reductive group). The vector space $V_{\lambda}$ admits an action of the group $H_\lambda$
with finitely many orbits (\cite[Proposition 5.6]{CFMMX22}). For each $\phi \in \Phi(G)_{\lambda}$, the element
\[ X_{\phi}:= d (\phi|_{\SL_2(\BC)}) \left(  \begin{pmatrix}
0&1\\0&0
\end{pmatrix}\right)\]
is in $V_{\lambda}$. Let $C_{\phi}$ denote the $H_{\lambda}$-orbit of $X_{\phi}$. Then we obtain a map
\begin{align*}
     \Phi(G)_{\lambda}& \to V_{\lambda}/H_{\lambda},\\
     \phi& \mapsto C_{\phi},
\end{align*}
which is in fact a bijection (see \cite[Proposition 4.2]{CFMMX22}). The geometric structure of $V_{\lambda}$ gives $\Phi(G)_{\lambda}$ a partial order $\geq_C$ defined by the closure relation as follows.
\begin{defn}\label{def C ordering}
For each infinitesimal character $\lambda$ of $G$, the finite set $\Phi(G)_{\lambda}$ is equipped with a partial ordering $\geq_C$ defined by $\phi_{1} \geq_C \phi_2$ if $\overline{C_{\phi_1}} \supseteq C_{\phi_2}$.
\end{defn}

We assume that there is an analogous definition of $\Psi^+(G)$ and assume the map $ \psi\mapsto \phi_{\psi}$ defined by \eqref{eq phi_psi} is an injection from $\Psi^{+}(G)$ to $\Phi(G)$. For an infinitesimal character $\lambda$ of $G$, we define the set
\begin{align}\label{eq psi lambda}
     \Psi(G)_{\lambda}:=\{\psi \in \Psi^+(G)\ | \ \lambda_{\phi_{\psi}}=\lambda\},
\end{align}
which can be viewed as a subset of $\Phi(G)_{\lambda}$. M{\oe}glin proved that all representations in a local Arthur packet $\Pi_{\psi}$ share the same extended cuspidal support (see \cite[Proposition 4.1]{Moe09b}) assuming there is a theory of local Arthur packets as in \cite[Conjecture 6.1]{Art89}. As a consequence, for any $\pi \in \Pi_{\psi}$, we have $\lambda_{\phi_{\pi}}=\lambda_{\phi_{\psi}}$, and for a fixed $\pi\in \Pi_A(G_n)$, we have an inclusion $\Psi(\pi) \subseteq \Psi(G)_{\lambda_{\phi_{\pi}}}$. Then we may restrict the partial order $\geq_C$ from $\Phi(G)_{\lambda}$ to $\Psi(G)_{\lambda}$ or $\Psi(\pi)$.
\begin{defn}\label{def Cordering}
For each infinitesimal character $\lambda$ of $G$, we define a partial order $\geq_{C}$ on the set $\Psi(G)_{\lambda}$ by $\psi_1 \geq_C \psi_2$ if $\phi_{\psi_1} \geq_C \phi_{\psi_2}$.
\end{defn}

If $\phi$ is a tempered $L$-parameter, then $\phi$ must be the unique maximal element in $\Phi(G_n)_{\lambda}$ under the closure ordering (see Proposition \ref{CFMZ}). Therefore, the closure ordering gives a measurement of ``temperedness," i.e., if $\psi_1 \geq_C \psi_2$, then $\psi_1$ is ``more tempered" than $\psi_2$.

% In \S \ref{sec computation for operators}, we prove the following analogue of Theorem \ref{thm max min intro} for the closure ordering $\geq_C$, which supports the intuition that $\psi^{max}(\pi)$ (resp. $\psi^{min}(\pi)$) is the ``most tempered" (resp. ``least tempered") member in the set $\Psi(\pi)$.

% \begin{thm}\label{thm operator intro}
% Let $\RG_n$ be $\Sp_{2n}$ or split $\SO_{2n+1}$. 
% \begin{enumerate}
%     \item If $T$ is a raising operator, then
%     \[T(\psi) \geq_{C} \psi,\]
%     for any $\psi \in \Psi(G_n)$. 
%     In other words, if $\psi \geq_{O} \psi'$, then $\psi \geq_{C} \psi'$.
%     \item Let $\pi \in \Pi_{A}(G_n)$. The parameters $\psi^{max}(\pi)$ and $ \psi^{min}(\pi)$ are the unique elements in $\Psi(\pi)$ such that for any $\psi \in \Psi(\pi)$, the following inequality holds
% \begin{equation}\label{clo rel for max 2}
%     \psi^{max}(\pi) \geq_C \psi \geq_C \psi^{min}(\pi). 
% \end{equation} 
% \end{enumerate}
% \end{thm}

Since the closure ordering $\geq_C$ can be defined for any $G$, suppose there is a theory of local Arthur packets for $ G$ and let $\pi \in \Pi_{A}(G)$. We conjecture that there are also unique elements $\psi^{max}(\pi)$ and $ \psi^{min}(\pi)$ satisfying the inequality \eqref{clo rel for max 2}. This would give a generalization of the definition of $\psi^{max}(\pi)$ and $ \psi^{min}(\pi)$.

\begin{conj}\label{conj max}
Let $\RG$ be a connected reductive group defined over a non-Archimedean local field. Assume that there is a local Arthur packets theory for $G$ as conjectured in \cite[Conjecture 6.1]{Art89}. Let $\pi\in\Pi_A(G)$. Then, for any $\psi_1,\psi_2\in\Psi(\pi)$, we have $\lambda_{\phi_{\psi_1}}=\lambda_{\phi_{\psi_2}}.$ Furthermore, there are unique elements $\psi^{max}(\pi)$ and $\psi^{min}(\pi)$ in $\Psi(\pi)$ such that for any local Arthur parameter $\psi \in \Psi(\pi)$, the following inequality holds
\begin{align*}
    \psi^{max}(\pi) \geq_C \psi \geq_C \psi^{min}(\pi).
\end{align*} 
\end{conj}

We also expect that there is an analogous conjecture for the Archimedean case and it is a very interesting question to define ``the" local Arthur parameters for irreducible representations of Arthur type for any real or complex connected reductive group.

\subsection{\texorpdfstring{The partition orderings $\geq_{D},$ $\geq_A$}{}}
In this subsection, we recall the definition of the orderings $\geq_{D}$ and $\geq_{A}$ from \cite[\S 11.1]{HLL22}. Let $\psi$ be a local Arthur parameter of $G_n$ and write
\[ \psi= \bigoplus_{i=1}^r \phi_i|\cdot|^{x_i} \otimes S_{a_i} \otimes S_{b_i}. \]
Let $d_i= \dim \phi_i $ (as a representation of $W_F$). Then we consider the partition
\begin{align*}
    \underline{p}^A(\psi)&:=[ b_1^{d_1a_1},\dots, b_r^{d_r a_r} ],
\end{align*}
which is exactly the partition corresponding to the nilpotent orbit of $\widehat{\RG}_n(\mathbb{C})$ containing the element 
\[d(\psi|_{\SL_2^{A}(\mathbb{C})}) \left( \begin{pmatrix}
0 &1 \\0 &0
\end{pmatrix} \right). \]
 We remark that the partition $\underline{p}^A(\psi)$ is a key ingredient in the Jiang conjecture (\cite[Conjecture 4.2]{Jia14} and \cite[Conjecture 1.6]{LS22}). Now we define the ordering $\geq_A$.

\begin{defn}\label{def Jordering}
We define a preorder $\geq_A$ on $\Psi^+(G_n)$ by $\psi_1 \geq_A \psi_2$ 
if 
$$\underline{p}^{A}(\psi_1) \leq \underline{p}^{A}(\psi_2)$$
under the dominance order of partitions. 
\end{defn}

Note that $\psi \in \Psi(G_n)$ is tempered if and only if $\underline{p}^A(\psi)=[1^{N}]$ ($N=2n$ or $2n+1$), which is the smallest partition under the dominance order. That is the reason that we reverse the direction of the inequality for the partitions in the above definition. Thus, $\geq_A$ gives a measurement of ``temperedness". I.e., if $\psi_1 \geq_A \psi_2$, then $\psi_1$ is ``more tempered" than $\psi_2$. Also note that it is possible that $\underline{p}^A(\psi_1)=\underline{p}^A(\psi_2)$ but $\psi_1 \neq \psi_2$, and hence $\geq_A$ is only a preorder but not a partial order.

% The following analogue of Theorem \ref{thm max min intro} for the ordering $\geq_A$ is proved in \cite[\S 11.1]{HLL22}, which again supports the intuition that $\psi^{max}(\pi)$ (resp. $\psi^{min}(\pi)$) is the ``most tempered" (resp. ``least tempered") member in the set $\Psi(\pi)$.

% \begin{thm}[{\cite[\S 11.1]{HLL22}}]\label{thm Jiang's partition intro} \
% \begin{enumerate}
%     \item If $T$ is a raising operator, then 
%     \[T(\psi) \geq_A \psi,\]
%     for any $\psi \in \Psi(G_n).$
%     In other words, if $\psi \geq_{O} \psi'$, then $\psi \geq_A \psi'$.
%     \item Let $\pi\in \Pi_{A}(G_n)$. The parameters $\psi^{max}(\pi)$ and $ \psi^{min}(\pi)$ are the unique elements in $\Psi(\pi)$ such that for any $\psi \in \Psi(\pi)$, the following inequality holds
%     \[ \psi^{max}(\pi) \geq_A \psi \geq_A \psi^{min}(\pi).\]
% \end{enumerate}
% \end{thm}

It is natural to consider the restriction to the Deligne-$\SL_2(\mathbb{C})$ instead of the Arthur-$\SL_2(\mathbb{C})$. Let
\[\underline{p}^D(\psi):=[a_1^{d_1b_1},\dots, a_r^{d_rb_r}],\]
which is exactly the partition corresponding to the nilpotent orbit of $\widehat{\RG}_n(\BC)$ containing the element
\[ d(\psi|_{\SL_2^{D}(\mathbb{C})})\left(\begin{pmatrix}
0&1\\0&0
\end{pmatrix}\right).\]
Note that $\underline{p}^D(\psi)=\underline{p}^A(\widehat{\psi})$ (see \eqref{psi dual} for the definition of $\widehat{\psi}$). Now we define the last ordering $\geq_{D}$.

\begin{defn}\label{def Dordering}
We define a preorder $\geq_{D}$ on $\Psi^+(G_n)$ by $\psi_1 \geq_{D} \psi_2$ if $\underline{p}^D(\psi_1)\geq \underline{p}^D(\psi_2)$.
\end{defn}

% At the end of \S \ref{sec noncontainment}, we prove the following analogue of Theorem \ref{thm max min intro} for the ordering $\geq_D$. 

% \begin{thm}\label{thm D ordering intro} \
% \begin{enumerate} 
%     \item If $T$ is a raising operator, then
%     \[T(\psi) \geq_{D} \psi,\]
%     for any $\psi\in\Psi(G_n).$ In other words, if $\psi \geq_{O} \psi'$, then $\psi \geq_{D} \psi'$.
%     \item Let $\pi\in \Pi_{A}(G_n)$. The parameters $\psi^{max}(\pi)$ and $ \psi^{min}(\pi)$ are the unique elements in $\Psi(\pi)$such that for any $\psi \in \Psi(\pi)$, the following inequality holds
%     \[ \psi^{max}(\pi) \geq_D \psi \geq_D \psi^{min}(\pi).\]
% \end{enumerate}
% \end{thm}

\subsection{Relation among the four orderings}\label{sec relations}
In this subsection, we summarize the relations among the four orderings in short. By Part (1) of Theorem \ref{thm three orderings},  the operator ordering $\geq_{O}$ implies $\geq_A,$ $\geq_D$, and $\geq_C$,  but the converse is not true in general (see Example \ref{exmp order comparison}). By certain geometric argument, one can show that the closure ordering $\geq_C$ implies $\geq_D$ (a detailed combinatorial proof is provided in Corollary \ref{cor comparison closure partition}), but the converse is also not true in general (again, see Example \ref{exmp order comparison}). The orderings $\geq_A$ and $\geq_D$ are related by taking the dual, i.e., $\psi\geq_A \psi'$ if and only if $\widehat{\psi'}\geq_D\widehat{\psi}$.  It is certainly an interesting question to study how the non-extremal elements in $\Psi(\pi)$ behave under these four orderings, particularly the partial order $\geq_C$. This is a work in progress of the authors.

\section{ \texorpdfstring{The $C$ ordering}{}}\label{sec operator}
The goal of this section is to prove Theorem \ref{thm three orderings} for the closure ordering $\geq_C$. Let $T$ be a raising operator applicable on a local Arthur parameter $\psi$. The relation between $\phi_1:=\phi_{T(\psi)}$ and $\phi_2:=\phi_{\psi}$ is stated explicitly in Definition \ref{def operators on parameters} and Remark \ref{rmk operator}. First, we develop two reduction lemmas that construct unramified $L$-parameters (i.e., trivial on the inertia group) $\phi_{1,ur}$ and $\phi_{2,ur}$ such that $ \phi_1 \geq_C \phi_2$ if $\phi_{1,ur} \geq_C \phi_{2,ur}$ (see \S \ref{sec two reduction lemmas}, Lemmas \ref{lem common part} and \ref{lem reduction to unramified} below). Then we recall the notion of \emph{rank triangles} from \cite[\S 10.2.1]{CFMMX22} that determines the closure ordering for unramified $L$-parameters (see \S\ref{sec closure ordering for unramified}). Finally, we show that $\phi_{1,ur} \geq_C \phi_{2,ur}$ by explicit computation and prove Theorem \ref{thm three orderings} for $C$ ordering in \S\ref{sec computation for operators}.

Part (2) of Theorem \ref{thm three orderings} follows directly from its Part (1) and Theorem \ref{thm max min intro}.
Let us demonstrate how to show Theorem \ref{thm three orderings} (1) for $C$ ordering using the reduction lemmas below in the following example.

\begin{exmp}
Let $\rho_1$ (resp. $\rho_2$) be an irreducible symplectic (resp. orthogonal) representation of dimension $2m_1$ (resp. $2m_2+1$) of $W_F$ such that $\det(\rho_2)$ is trivial. Consider the following two local Arthur parameters of $\Sp_{4m_1+2m_1}(F)$ of good parity
\begin{align*}
    \psi_1&=\rho_1 \otimes S_{1} \otimes S_2 + \rho_2 \otimes S_1 \otimes S_1,\\
    \psi_2&=\rho_1 \otimes S_{2} \otimes S_1 + \rho_2 \otimes S_1 \otimes S_1.
\end{align*}
Note that $\psi_2= dual_1^{-}(\psi_1)$, where the index $1$ is the unique element in $I_{\rho_1}$. We have
\begin{align*}
    \phi_{\psi_1}&= \rho_1|\cdot|^{\half{1}}\otimes S_1+\rho_1|\cdot|^{\half{-1}}\otimes S_1+ \rho_2\otimes S_1,\\
    \phi_{\psi_2}&= \rho_1 \otimes S_2+ \rho_2 \otimes S_1.
\end{align*}
First, Lemma \ref{lem common part} below allows us to cancel the common part $\rho_2 \otimes S_1$. To be explicit, consider the following local $L$-parameters of (split) $\SO_{4m_1}(F)$
\begin{align*}
    \phi_1'&=\rho_1|\cdot|^{\half{1}}\otimes S_1+\rho_1|\cdot|^{\half{-1}}\otimes S_1,\\
    \phi_2'&= \rho_1 \otimes S_2.
\end{align*}
Then $\phi_{\psi_2} \geq_C \phi_{\psi_1}$ if $\phi_2' \geq_C \phi_1'$. Next, Lemma \ref{lem reduction to unramified} below allows us to reduce to the unramified case. To be explicit, consider the following local $L$-parameters of $\SO_{3}(F)$
\begin{align*}
    \phi_{1,ur}'&=|\cdot|^{\half{1}}\otimes S_1+|\cdot|^{\half{-1}}\otimes S_1,\\
    \phi_{2,ur}'&= |\cdot|^{0}\otimes S_2.
\end{align*}
Then $\phi_2' \geq_C \phi_1'$ if and only if $\phi_{2,ur}' \geq_{C} \phi_{1,ur}'$. Finally, one can compute their rank triangles to conclude that $\phi_{2,ur}'\geq_C \phi_{1,ur}'$. In summary, we have verified \[dual_1^{-}(\psi_1)=\psi_2\geq_C \psi_1.\]
\end{exmp}

\subsection{Two reduction lemmas}\label{sec two reduction lemmas}

In this subsection, we let $\RG_n$ denote one of the quasi-split groups $\Sp_{2n}, \SO_{2n+1}$ or $\SO_{2n}$, and let $\phi_1, \phi_2$ be two local $L$-parameters of $G_n$ that share the same infinitesimal character $\lambda$. We introduce two reduction lemmas for comparing the closure ordering for $\phi_1$ and $\phi_2$.

The first lemma allows us to cancel the common part of $\phi_1$ and $\phi_2$ that is self-dual.

\begin{lemma}\label{lem common part}
If $\phi_1=\phi+ \phi_1'$ and $\phi_2=\phi+ \phi_2'$ are local $L$-parameters of $G_n$ such that $\lambda_{\phi_1}=\lambda_{\phi_2}$ and $\phi, \phi_1', \phi_2'$ are all self-dual, then $\phi_1 \geq_C \phi_2 $ if $\phi_1' \geq_C \phi_2'$. Here we regard $\phi_1', \phi_2'$ as local $L$-parameters of other quasi-split classical groups.
\end{lemma}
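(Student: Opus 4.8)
The plan is to realize the closure ordering $\geq_C$ on $\Phi(G_n)_\lambda$ via a corresponding statement about orbit closures in the Vogan variety, and then to reduce the comparison of $\phi_1 = \phi + \phi_1'$ and $\phi_2 = \phi + \phi_2'$ to that of $\phi_1'$ and $\phi_2'$ by ``factoring out'' the common summand $\phi$. First I would recall that for a split classical group the infinitesimal parameter $\lambda$ determines a decomposition of $\widehat{\mathfrak g}$ into $\Ad\circ\lambda$-isotypic pieces, and $V_\lambda$ is the $|w|$-eigenspace; since $\phi$, $\phi_1'$, $\phi_2'$ are all self-dual, the sum $\phi \oplus \phi_i'$ lands in a product of smaller classical (or general linear) groups, and the Vogan variety $V_\lambda$ and the group $H_\lambda$ factor accordingly. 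More precisely, writing $\lambda = \lambda' \oplus \lambda''$ where $\lambda'$ is the infinitesimal parameter of $\phi$ (appropriately embedded) and $\lambda''$ that of $\phi_i'$, one gets a block decomposition in which $X_{\phi_i} = X_\phi \oplus X_{\phi_i'} \oplus (\text{off-diagonal interaction terms})$. The key point is that, because $\phi$ is fixed and common to both parameters, the ``$X_\phi$ component'' and the Levi/unipotent structure governing the off-diagonal blocks are the same for $i = 1, 2$.

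Next I would make this precise using the product structure: the centralizer group $H_\lambda$ contains the product $H_{\lambda'} \times H_{\lambda''}$ acting blockwise, and the orbit $C_{\phi_i}$ fibers over $C_{\phi'} \times C_{\phi_i'}$ (the orbit of the pair $(X_\phi, X_{\phi_i'})$ in $V_{\lambda'} \times V_{\lambda''}$) with the off-diagonal blocks forming some $H_\lambda$-equivariant bundle. Since the fiber structure over the diagonal-block part is independent of $i$ (it depends only on $\lambda$, $X_\phi$, and the ambient group), the closure relation is controlled entirely by the base: $\overline{C_{\phi_1}} \supseteq C_{\phi_2}$ holds if $\overline{C_{\phi'} \times C_{\phi_1'}} \supseteq C_{\phi'} \times C_{\phi_2'}$ in $V_{\lambda'} \times V_{\lambda''}$, i.e. if $\overline{C_{\phi_1'}} \supseteq C_{\phi_2'}$ in $V_{\lambda''}$, which is exactly $\phi_1' \geq_C \phi_2'$. (One may cite \cite[Section 4, 5]{CFMMX22} for the product compatibility of Vogan varieties, or prove the needed special case directly.) The equality $\overline{C_{\phi_1}} = \overline{C_{\phi_1'}} \times (\text{fixed fiber data})$ is, in the cases relevant to this paper, exactly the statement that adding a fixed self-dual summand to an $L$-parameter translates orbit closures along a trivial factor.

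I expect the main obstacle to be the off-diagonal blocks: when $\phi$ and $\phi_i'$ share common irreducible constituents (which can happen, and in fact is the interesting case since we only assume $\lambda_{\phi_1} = \lambda_{\phi_2}$, not that the constituents of $\phi$ are disjoint from those of $\phi_i'$), the decomposition $V_\lambda = V_{\lambda'} \times V_{\lambda''} \times (\text{interaction})$ is not a literal product of the two smaller Vogan varieties, and one must check that the $H_\lambda$-orbit structure on the interaction part, restricted to the locus with fixed $X_\phi$ and varying $X_{\phi_i'}$, still degenerates compatibly. The cleanest way around this is probably to argue at the level of $L$-parameters directly using the combinatorial description of $\geq_C$ via \emph{rank triangles} (the notion from \cite[\S 10.2.1]{CFMMX22} that the paper is about to introduce in \S\ref{sec closure ordering for unramified}): one shows that adding a common self-dual summand $\phi$ shifts all the relevant rank functions by the same amount, so the dominance-type inequality defining $\overline{C_{\phi_1'}} \supseteq C_{\phi_2'}$ is preserved verbatim after adding $\phi$. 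This reduces everything to a bookkeeping computation with rank triangles, which is routine once the framework of \S\ref{sec closure ordering for unramified} is set up; alternatively, since the very next lemma (Lemma \ref{lem reduction to unramified}) reduces to the unramified case anyway, one could first apply that reduction and then invoke the known geometry of unramified Vogan varieties, where the product decomposition is transparent. I would present the rank-triangle argument as the main line, noting that it is the "if" direction only (as stated), so no converse bookkeeping is needed.
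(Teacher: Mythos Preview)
Your proposal is on the right track but significantly overcomplicates the argument, and the rank-triangle route you settle on as the ``main line'' is both a forward reference (that machinery is developed only in \S\ref{sec closure ordering for unramified}, and only for unramified parameters satisfying Assumption~\ref{assu good parity}) and requires a preliminary $\rho$-by-$\rho$ decomposition that is itself an instance of the lemma you are trying to prove.

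The paper's proof bypasses all of this with a direct closed-embedding argument. The crucial observation you are missing is that both $X_{\phi_1}$ and $X_{\phi_2}$ are already \emph{block-diagonal}: writing $E = E_\phi \oplus E'$ for the underlying spaces, one has $X_{\phi_i} = (X_\phi, X_{\phi_i'})$ with no off-diagonal part whatsoever. Hence there is no need to analyze interaction blocks or fibrations. The paper simply sets
\[
V' := \{(X_\phi, v) : v \in V_{\lambda'}\} \subset V_\lambda, \qquad H' := \{(I,h) : h \in H_{\lambda'}\} \subset H_\lambda,
\]
checks that $V'$ is a closed subvariety of $V_\lambda$ isomorphic to $V_{\lambda'}$, that $H'$ is a subgroup of $H_\lambda$, and that the $H'$-action on $V'$ is identified with the $H_{\lambda'}$-action on $V_{\lambda'}$. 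Then $\phi_1' \geq_C \phi_2'$ means $X_{\phi_2'}$ lies in the closure of the $H_{\lambda'}$-orbit of $X_{\phi_1'}$; transporting through the isomorphism, $X_{\phi_2}$ lies in the closure of the $H'$-orbit of $X_{\phi_1}$ inside $V'$. Since $V'$ is closed in $V_\lambda$ and $H' \subseteq H_\lambda$, this closure is contained in the closure of the $H_\lambda$-orbit of $X_{\phi_1}$, giving $\phi_1 \geq_C \phi_2$.

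So the ``obstacle'' you flag---shared constituents and off-diagonal blocks---never arises, because for the one-sided implication stated you only need that the $H_\lambda$-orbit closure \emph{contains} the $H'$-orbit closure, not that they coincide. Your product/fibration picture would be relevant if one wanted the converse, but the lemma only asserts ``if''.
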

\begin{proof}
Write $\lambda':=\lambda_{\phi_i'}$ for short. Then we may write $\lambda:= \lambda_{\phi_1}= \lambda_{\phi}\oplus \lambda'$. Let $E$ (resp. $E_{\phi},E'$) be the space that the image of $\lambda$ (resp. $\lambda_{\phi}, \lambda'$) acts on. Then $E= E_{\phi} \oplus E'$, and both $\phi_i$ and $\lambda$ have image in $\Aut(E_{\phi}) \times \Aut(E') \subseteq \Aut(E)$. In other words, we may visualize the decompositions as follows after choosing a basis 
\[ \phi_i= \left( \begin{array}{c|ccc}
     \phi& &&  \\
     \hline
     & &&\\
          & &\phi_i'&\\
               & &&\\
\end{array} \right),\ \ \lambda_{\phi_i}= \left( \begin{array}{c|ccc}
     \lambda_{\phi} & &&  \\
     \hline
     & &&\\
          & &\lambda'&\\
               & &&\\
\end{array} \right).  \]

Let $\iota$ be the injection from $ \End(E_{\phi}) \times \End(E')$ to $\End(E)$, and denote
\[X:=d(\phi|_{\SL_2(\BC)})\left( \begin{pmatrix}
     0&1\\0&0
     \end{pmatrix} \right).\]
Consider the following sets
\[H':= \{\iota(I,h) \in \Aut(E) \ | \ h \in H_{\lambda'} \},\ V':=\{ \iota(X,v) \in \End(E)\ | \ v \in V_{\lambda'}\}.\]
That is
\[  H'=\left( \begin{array}{c|ccc}
     I& &&  \\
     \hline
     & &&\\
          & &H_{\lambda'}&\\
               & &&\\
\end{array} \right),\ V'=\left( \begin{array}{c|ccc}
     X& &&  \\
     \hline
     & &&\\
          & &V_{\lambda'}&\\
               & &&\\
\end{array} \right). \]
One can check that $H'$ is a subgroup of $H_{\lambda}$, and $V'$ is a closed subvariety of $V_{\lambda}$. Clearly, we have $H' \cong H_{\lambda'}$, $V' \cong V_{\lambda'}$, and the conjugation action of $H'$ on $V'$ can be identified with the adjoint action of $H_{\lambda'}$ on $V_{\lambda'}$. Let 
\[ X_i:= d(\phi_i|_{\SL_2(\BC)})\left( \begin{pmatrix}
     0&1\\0&0
     \end{pmatrix} \right),\ X_i':=d(\phi_i'|_{\SL_2(\BC)})\left( \begin{pmatrix}
     0&1\\0&0
     \end{pmatrix} \right).\]
Then $X_i= \iota(X,X_i')$. That is
\[ X_i= \left(\begin{array}{c|ccc}
     X& &&  \\
     \hline
     & &&\\
          & &X_i'&\\
               & &&\\
\end{array} \right). \]

The assumption $\phi_1'\geq_C \phi_2'$ implies that $X_2'$ is in the closure of the $H'$ orbit of $X_1'$. Therefore, $X_2$ is in the closure of the $H_{\lambda}$ orbit of $X_1$, and hence $\phi_1 \geq_C \phi_2$. This completes the proof of the lemma.
\end{proof}

As a corollary, we show that it suffices to consider representations of good parity for Theorem \ref{thm max intro} and Theorem \ref{thm three orderings} for $C$ ordering.

\begin{cor}\label{cor reduction to gp}
Theorem \ref{thm three orderings} for $C$ ordering (resp. Theorem \ref{thm max intro})  holds for any representation $\pi\in \Pi_A(G_n)$ if and only if it holds for any representation $\pi_0\in \Pi_{A}^{gp}(G_n)$.
\end{cor}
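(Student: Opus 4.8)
The goal is to reduce the statements of Theorem \ref{thm operator intro} and Theorem \ref{thm max intro} from arbitrary representations of Arthur type to the good parity case, using the decomposition $\psi = \psi_1 \oplus \psi_0 \oplus \psi_1^\vee$ of Theorem \ref{thm reduction to gp} together with Lemma \ref{lem common part}. The plan is as follows. First I would recall that by Theorem \ref{thm reduction to gp}, any $\pi \in \Pi_A(G_n)$ can be written as $\pi = \tau_{\psi_1} \rtimes \pi_0$ with $\pi_0 \in \Pi_A^{gp}(G_m)$ of good parity, and that by Definition \ref{def max min}, $\psi^{max}(\pi) = \psi_1 + \psi_1^\vee + \psi_{\EE^{|max|}}$ where $\pi(\EE^{|max|})$ is obtained from $\pi_0$, and similarly for $\psi^{min}$. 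The key point is that the non-good-parity summand $\psi_1 \oplus \psi_1^\vee$ is unchanged under all of the operators in Definition \ref{def operators on parameters} (they only act on $\psi_0$), and more importantly, $\psi_1 \oplus \psi_1^\vee$ contributes a fixed self-dual summand to the associated $L$-parameter $\phi_\psi$.

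The second step is to verify that the $L$-parameter decomposes compatibly. For any $\psi = \psi_1 \oplus \psi_0 \oplus \psi_1^\vee$, we have $\phi_\psi = \phi_{\psi_1} \oplus \phi_{\psi_0} \oplus \phi_{\psi_1^\vee}$, and $\phi_{\psi_1} \oplus \phi_{\psi_1^\vee} = \phi_{\psi_1} \oplus \phi_{\psi_1}^\vee$ is self-dual. Similarly, for a representation $\pi = \tau_{\psi_1} \rtimes \pi_0$, one knows (this is part of the local Langlands correspondence / Arthur's classification as recalled in the earlier sections, or can be cited from \cite{HLL22}) that $\phi_\pi = \phi_{\pi_0} \oplus \phi_{\tau_{\psi_1}} \oplus \phi_{\tau_{\psi_1}}^\vee$ where $\phi_{\tau_{\psi_1}}$ is the $L$-parameter of the $\GL$-representation $\tau_{\psi_1}$, and this matches $\phi_{\psi_1} \oplus \phi_{\psi_1}^\vee$ on the non-good-parity part, since $\psi^{max}(\pi)$ and $\pi$ share the infinitesimal parameter $\lambda_{\phi_\pi}$ (by M\oe glin's result on extended cuspidal supports, cited in the introduction). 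In other words, $\phi_\pi$ and $\phi_{\psi^{max}(\pi)}$ (resp.\ $\phi_{\psi_1}$ and $\phi_{\psi_2}$ for $\psi_1 \geq_O \psi_2$ in $\Psi(G_n)$) have a common self-dual summand coming from $\psi_1 \oplus \psi_1^\vee$, and differ only in the good parity part, which are precisely $\phi_{\pi_0}$ and $\phi_{\psi^{max}(\pi_0)}$ (resp.\ $\phi_{(\psi_1)_0}$ and $\phi_{(\psi_2)_0}$).

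The third step is to apply Lemma \ref{lem common part} directly: with $\phi := \phi_{\psi_1} \oplus \phi_{\psi_1}^\vee$ being the common self-dual summand, and $\phi_1', \phi_2'$ the good-parity $L$-parameters, the hypothesis that $\lambda_{\phi_1} = \lambda_{\phi_2}$ is guaranteed by the shared infinitesimal parameter, so $\phi_1 \geq_C \phi_2$ follows from $\phi_1' \geq_C \phi_2'$, which is the good parity case of the theorem. Conversely, the good parity case is a special case of the general case (take $\psi_1 = 0$), so the equivalence holds. For Theorem \ref{thm operator intro}, one also needs that if $T$ is a raising operator applicable to $\psi$, then $T$ acts only on $\psi_0$ (immediate from Definition \ref{def operators on parameters}(4) and the structure of the operators), so $\psi \geq_O \psi'$ reduces to $\psi_0 \geq_O \psi_0'$ plus equality of the $\psi_1$ parts; the uniqueness of $\psi^{max}, \psi^{min}$ then transfers as well since $\Psi(\pi) \leftrightarrow \Psi(\pi_0)$ via $\psi \mapsto \psi_0$ bijectively (with $\psi_1$ determined by the common infinitesimal parameter).

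I expect the main obstacle to be bookkeeping rather than anything deep: one must be careful that the infinitesimal parameters genuinely match (so that Lemma \ref{lem common part}'s hypothesis $\lambda_{\phi_1} = \lambda_{\phi_2}$ is satisfied), which relies on $\psi$ and every $\psi' \in \Psi(\pi)$ — in particular $\psi^{max}(\pi)$ — sharing the extended cuspidal support of $\pi$; this is M\oe glin's \cite[Proposition 4.1]{Moe09b} together with the fact that the operators preserve the infinitesimal parameter (which is implicit in the fact that they preserve the packet-intersection structure in \cite{HLL22}, or can be checked directly from Definition \ref{def operators on parameters} since each operator preserves the multiset of $\rho|\cdot|^x$ occurring, i.e., $\phi_{T(\psi)}$ and $\phi_\psi$ have the same $\lambda$). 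Once that is in hand, the corollary is essentially a formal consequence of Lemma \ref{lem common part} and the definitions of $\psi^{max}, \psi^{min}$. I would also note explicitly that the "only if" direction is trivial since $\Pi_A^{gp}(G_n) \subseteq \Pi_A(G_n)$.
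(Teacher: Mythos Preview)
Your proposal is correct and follows essentially the same approach as the paper: decompose $\pi = \tau_{\psi_1}\rtimes\pi_0$ via Theorem \ref{thm reduction to gp}, note that all the relevant $L$-parameters $\phi_\pi$, $\phi_\psi$, $\phi_{\psi^{max}(\pi)}$, $\phi_{\psi^{min}(\pi)}$ share the common self-dual summand $\phi_{\psi_1}\oplus\phi_{\psi_1^\vee}$, and then apply Lemma \ref{lem common part} to reduce the closure comparison to the good parity pieces. Your additional remarks on why the infinitesimal parameters match and on the operator-reduction for Theorem \ref{thm operator intro}(1) are more explicit than the paper's write-up but entirely in line with it.
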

\begin{proof}
For an arbitrary representation $\pi \in \Pi_{\psi}$, we want to show that
\begin{align}\label{eq reduction to gp}
    \phi_{\pi} \geq_C \phi_{\psi^{max}(\pi)} \geq_C \phi_{\psi} \geq_C \phi_{\psi^{min}(\pi)}.
\end{align}
Applying Theorem \ref{thm reduction to gp}, we have
\begin{align*}
    \begin{cases}
    \psi&=( \psi_1 \oplus \psi_1^{\vee})\oplus \psi_0,\\
    \psi^{max}(\pi)&=( \psi_1 \oplus \psi_1^{\vee})\oplus \psi^{max}(\pi_0),\\
    \psi^{min}(\pi)&=( \psi_1 \oplus \psi_1^{\vee})\oplus \psi^{min}(\pi_0),
    \end{cases}
  \ \
    \begin{cases}
    \phi_\psi&=(\phi_{\psi_1}\oplus\phi_{\psi_1^{\vee}})\oplus \phi_{\psi_0},\\
    \phi_{\psi^{max}(\pi)}&=(\phi_{\psi_1}\oplus\phi_{\psi_1^{\vee}})\oplus \phi_{\psi^{max}(\pi_0)},\\
    \phi_{\psi^{min}(\pi)}&=(\phi_{\psi_1}\oplus\phi_{\psi_1^{\vee}})\oplus \phi_{\psi^{min}(\pi_0)},\\
    \phi_{\pi}&=(\phi_{\psi_1}\oplus\phi_{\psi_1^{\vee}})\oplus \phi_{\pi_0},
    \end{cases}  
\end{align*}
where $\pi_0 \in \Pi_{\psi_0} \subseteq \Pi_{A}^{gp}(G_{n_0})$. Therefore, Lemma \ref{lem common part} implies that \eqref{eq reduction to gp} holds if 
\[ \phi_{\pi_0} \geq_C \phi_{\psi^{max}(\pi_0)} \geq_C \phi_{\psi_0} \geq_C \phi_{\psi^{min}(\pi_0)},\]
which follows from the assumption. This completes the proof of the corollary.
\end{proof}

Now assume the restriction of $\phi_1, \phi_2$ on $W_F$ is a direct sum of twists of the same irreducible self-dual representation $\rho$ of $W_F$. To show $\phi_1 \geq_C \phi_2,$ the following lemma allows us to reduce to show $\phi_{1,ur} \geq_C \phi_{2,ur}$ where both $\phi_{1,ur}, \phi_{2,ur}$ are unramified local $L$-parameters of quasi-split classical groups.

\begin{lemma}\label{lem reduction to unramified}
Suppose $\phi_1,\phi_2$ are local $L$-parameters of $G_n$ that share the same infinitesimal character. We assume they are of the form
\[\phi_i= \bigoplus_{j \in I_i} \rho|\cdot|^{x_j} \otimes S_{a_j},\]
where $\rho$ is a self-dual irreducible representation of $W_F$ and $x_j \in \R$. Then we define 
\[ \phi_{i,ur}:= \bigoplus_{j \in I_i} |\cdot|^{x_j} \otimes S_{a_j},\]
which are local $L$-parameters of quasi-split $\Sp_{2m}(F), \SO_{2m+1}(F)$ or $\SO_{2m}(F)$. Then $\phi_{1,ur}$ and $\phi_{2,ur}$ also share the same infinitesimal character, and $\phi_1\geq_C \phi_2$ if and only if $\phi_{1,ur} \geq_C \phi_{2,ur}$.
\end{lemma}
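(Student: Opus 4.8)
The statement asserts that passing from a local $L$-parameter $\phi_i = \bigoplus_{j\in I_i} \rho|\cdot|^{x_j}\otimes S_{a_j}$ to its ``unramified model'' $\phi_{i,ur} = \bigoplus_{j\in I_i} |\cdot|^{x_j}\otimes S_{a_j}$ preserves the closure ordering. The plan is to exhibit an explicit isomorphism between the relevant Vogan variety data for $\phi_1,\phi_2$ and for $\phi_{1,ur},\phi_{2,ur}$, so that the closure containment $\overline{C_{\phi_1}}\supseteq C_{\phi_2}$ translates verbatim into $\overline{C_{\phi_{1,ur}}}\supseteq C_{\phi_{2,ur}}$ and conversely.

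First I would record the structural input: since $\phi_1,\phi_2$ share a common infinitesimal parameter $\lambda$, and all summands are twists of one fixed self-dual irreducible $\rho$ of $W_F$ (say $\dim\rho = d$), the multiset of twisting exponents $\{x_j\}$ (with multiplicity, keeping track of the $a_j$'s) appearing across $\phi_1$ and $\phi_2$ is the same — this is exactly what ``same infinitesimal character'' encodes once the $W_F$-type $\rho$ is fixed. The same statement then holds for $\phi_{1,ur},\phi_{2,ur}$ with $\rho$ replaced by the trivial character, which proves the first assertion of the lemma. Now $\lambda_{\phi_i}|_{W_F}$ decomposes as a sum of copies of $\rho\otimes(\text{unramified twist})$; I would describe $\widehat{\mathfrak g}_\lambda$ as a block matrix space indexed by the distinct isotypic pieces of $\lambda$, where each block is $\Hom$ between two $\rho$-isotypic components. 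The key observation is that $\Hom_{W_F}(\rho\otimes|\cdot|^{a}, \rho\otimes|\cdot|^{b})$ is canonically a line (when $a\equiv b$ appropriately) generated by $\mathrm{id}_\rho$, because $\rho$ is irreducible, so $\End_{W_F}(\rho) = \BC$ by Schur. Consequently the Vogan variety $V_\lambda$ and its $H_\lambda$-action are, after this block decomposition, identified with those of the unramified parameter $\lambda_{ur}$: each "$d\times d$ block of scalars times $\mathrm{id}_\rho$'' in the ramified picture corresponds to a single scalar entry in the unramified picture. Concretely, $H_\lambda \hookrightarrow \prod \GL(\text{multiplicity spaces})\otimes \mathrm{id}_\rho$ and $V_\lambda = \bigoplus(\text{Hom of multiplicity spaces})\otimes\mathrm{id}_\rho$, which is precisely $H_{\lambda_{ur}}$ acting on $V_{\lambda_{ur}}$ after removing the redundant $\mathrm{id}_\rho$ tensor factor.

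Next I would check that this identification carries $X_{\phi_i}$ to $X_{\phi_{i,ur}}$: the nilpotent $X_{\phi_i} = d(\phi_i|_{\SL_2})\bigl(\begin{smallmatrix}0&1\\0&0\end{smallmatrix}\bigr)$ is built, summand by summand, from the standard nilpotent in each $S_{a_j}$, tensored with $\mathrm{id}_\rho$; stripping the $\mathrm{id}_\rho$ factor yields exactly $X_{\phi_{i,ur}}$, which is the corresponding nilpotent for the unramified parameter. Hence $C_{\phi_i}$ and $C_{\phi_{i,ur}}$ correspond under a variety isomorphism that is equivariant for the identified group actions, so $C_{\phi_2}\subseteq\overline{C_{\phi_1}}$ if and only if $C_{\phi_{2,ur}}\subseteq\overline{C_{\phi_{1,ur}}}$, which is the claim.

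I do not expect any serious obstacle here; the content is bookkeeping about isotypic decompositions under the Weil group. The one point requiring a little care is the bilinear form: $G_n$ is a classical group, so $\widehat{\RG}_n$ sits inside $\GL_N$ preserving a form, and $H_\lambda$, $V_\lambda$ are the ``classical'' (symplectic/orthogonal) subdata rather than the full $\GL$ versions. I would therefore verify that the $\rho$-isotypic block decomposition is compatible with the duality pairing — pairing $\rho\otimes|\cdot|^{a}$ with its dual $\rho^\vee\otimes|\cdot|^{-a} = \rho\otimes|\cdot|^{-a}$ (using self-duality of $\rho$) — so that the induced form on the multiplicity spaces is again of classical type of the same flavor, matching the form for $\phi_{i,ur}$ on the smaller group $\Sp_{2m}$, $\SO_{2m+1}$, or $\SO_{2m}$. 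The sign/type of $\rho$ (orthogonal versus symplectic) shifts which classical group appears on the unramified side, but since the lemma only asserts that $\phi_{i,ur}$ is an $L$-parameter of \emph{some} one of these three split groups and that the closure orderings match, this causes no difficulty. With the form bookkeeping in hand, the equivariant isomorphism of pairs $(V_\lambda, H_\lambda)\cong(V_{\lambda_{ur}}, H_{\lambda_{ur}})$ sending $C_{\phi_i}\mapsto C_{\phi_{i,ur}}$ completes the proof.
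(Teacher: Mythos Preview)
Your proposal is correct and follows essentially the same approach as the paper. Both arguments identify $E = E_\rho \otimes E_{ur}$, use Schur's lemma on the irreducible $\rho$ to show that every element of $H_\lambda$ (resp.\ $V_\lambda$) has the form $I_\rho \otimes h$ (resp.\ $I_\rho \otimes v$) with $h \in H_{\lambda_{ur}}$ and $v \in V_{\lambda_{ur}}$, verify compatibility with the classical bilinear form, and conclude that stripping the $\rho$-factor gives an equivariant isomorphism $(V_\lambda, H_\lambda) \cong (V_{\lambda_{ur}}, H_{\lambda_{ur}})$ carrying $C_{\phi_i}$ to $C_{\phi_{i,ur}}$.
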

\begin{proof}
First, note that $\rho\otimes \lambda_{\phi_{i,ur}}= \lambda_{\phi_i}$, so $\lambda_{\phi_{1,ur}}=\lambda_{\phi_{2,ur}}$. We denote $\lambda_{ur}:=\lambda_{\phi_{1,ur}}$. Let $E$ (resp. $E_{\rho},E_{ur}$) be the space that the image of $\lambda$ (resp. $\rho, \lambda_{ur}$) acts on. That is, we view 
\begin{align*}
    \phi_i&: W_F \times \SL_2(\BC) \to \GL(E),\\
        \rho \otimes S_1&: W_F \times \SL_2(\BC) \to \GL(E_{\rho}),\\
        \phi_{i,ur}&: W_F \times \SL_2(\BC) \to \GL(E_{ur}).
\end{align*}
Then we may identify $\phi_i=(\rho \otimes S_1) \otimes \phi_{i,ur}$, where the second $\otimes$ is the tensor of representations of $W_F \times \SL_2(\BC)$, and $ E= E_{\rho} \otimes E_{ur}$. Consider
\[ H':=\{ I\otimes h \in \Aut(E)\ | \ h \in H_{\lambda_{ur}} \},\ V':=\{ I \otimes v \in \End(E)\ | \ v \in V_{\lambda_{ur}} \}, \]
where we view $V_{\lambda_{ur}}$ as a closed subset of $ \End(E_{ur})$, and $H_{\lambda_{ur}}$ as a subgroup of $\Aut(E_{ur})$. We have $H'$ is isomorphic to $H_{\lambda_{ur}}$ and $V'$ is isomorphic to $V_{\lambda_{ur}}$. The conjugation action of $H'$ on $V'$ can be identified with the action of $H_{\lambda_{ur}}$ on $V_{\lambda_{ur}}$. Note that
\begin{align*}
     \End(E_{ur}) &\hookrightarrow \End(E),\\
    h &\mapsto I \otimes h 
\end{align*}
is a closed immersion, and hence $V'$ is closed in $\End(E)$. To prove the lemma, it suffices to verify the following claims:
\begin{enumerate}
    \item $H'=H_{\lambda}$, and
    \item $V'=V_{\lambda}$.
\end{enumerate}

Now we verify Claim (1). We first show that $H' \subseteq H_{\lambda}$, i.e., for any $h \in H_{\lambda_{ur}}$,  $I \otimes h$ is also in $\widehat{\RG}_n(\BC)$ and commutes with the image of $\lambda$. Since the identity preserves the bilinear form on $E_{\rho}$ and $ h$ preserves the bilinear form on $E_{ur}$, we see that $I \otimes  h$ preserves the symplectic or orthogonal form on $E=E_{\rho} \otimes E_{ur}$. This shows that $I \otimes h \in \widehat{\RG}_n(\BC)$. To check $I \otimes h$ commutes with the image of $\lambda$, it suffices to check that 
\[\lambda(w) \circ (I\otimes h)(e_1 \otimes e_2)= (I\otimes h) \circ \lambda(w) (e_1 \otimes e_2),\]
for any $w \in W_F, e_1 \in E_{\rho}$ and $e_2 \in E_{ur}$. Recall that we have $\lambda= \rho \otimes \lambda_{ur}$. Thus
\begin{align*}
    \lambda(w) \circ (I\otimes h)(e_1 \otimes e_2)
    &= \lambda(w) ( e_1\otimes h(e_2))\\
    &= \rho(w)(e_1)\otimes (\lambda_{ur}(w) \circ h(e_2))\\
    &=\rho(w)(e_1)\otimes (  h \circ \lambda_{ur}(w)(e_2))\\
    &= (I\otimes h)( \rho(w)(e_1) \otimes \lambda_{ur}(w)( e_2))\\
    &= (I\otimes h )\circ \lambda(w) (e_1\otimes e_2).
\end{align*}
This completes the verification that $H' \subseteq H_{\lambda}$.

Now we show that $H_{\lambda} \subseteq H'$. Write $ \lambda_{ur}=\oplus_{i=1}^r |\cdot|^{y_{i}} $ and choose a basis $\{e_1, \dots , e_r\}$ of $E_{ur}$ such that 
\[\lambda(w)(e_{\rho} \otimes e_i)= |w|^{y_i}\rho(w)(e_{\rho}) \otimes e_i,   \]
for any $e_{\rho} \in E_{\rho}$ and $w \in W_F$. Denote $E_i:= E_{\rho} \otimes \BC e_i$, which is the underlying space of the irreducible representation $\rho \otimes |\cdot|^{y_i}$. Take a $T \in \Aut(E_{\rho}\otimes E_{ur})$ that commutes with the image of $\lambda$. From the assumptions, $ T(E_i)$ is invariant under the action of $W_F$, and hence $T(E_i)=E_j$ for some $j$ such that $y_i=y_j$. Then we may write 
\[ T|_{E_i}( e_{\rho} \otimes e_i )= T_i(e_{\rho}) \otimes e_j, \]
for some $T_i \in \Aut(E_{\rho})$. Since $T$ commutes with the image of $\lambda$, $T_i$ also commutes with the image of $\rho$. By Schur's lemma, the irreducibility of $\rho$ implies that $T_i$ is a scalar multiplication. Therefore, we may write $T=I \otimes h$ for some $h \in \Aut(E_{ur})$. Moreover, since $T$ sends $E_i$ to $E_j$ such that $y_i=y_j$, the linear map $h$ also commutes with the image of $\lambda_{ur}$. Finally, we further assume $T$ is in $\widehat{\RG}_n(\BC)$, i.e., $T=I\otimes h$ preserves the symplectic or orthogonal form on $E_{\rho} \otimes E_{ur}$. Then $h$ must also preserve the symplectic or orthogonal form on $E_{ur}$. This completes the proof that $H_{\lambda} \subseteq H'$ and the verification of Claim (1).

For Claim (2), we first check that $V' \subseteq V_{\lambda}$, i.e., $V' \subseteq \widehat{\mathfrak{g}}_n$, and any element $I \otimes v$ in $V'$ satisfies the following equation for any $w \in W_F$
\[ \Ad(\lambda(w))(I \otimes v)=|w|I \otimes v.  \]
Let $\widehat{\RG}_{\lambda_{ur}}(\BC)$ be the subgroup of $\Aut(E_{ur})$ that preserves the symplectic or orthogonal form on $E_{ur}$. From the proof of Claim (1) above, the map
\begin{align*}
    \widehat{\RG}_{\lambda_{ur}}(\BC) &\hookrightarrow \widehat{\RG}_n(\BC),\\
    g &\mapsto I \otimes g
\end{align*}
is a Lie group homomorphism. Thus $V'$, a subset of the image of differential of above map, is contained in the Lie algebra of $\widehat{\RG}_n(\BC)$. 

Next, we check that
\[ \lambda(w) \circ (I\otimes v) \circ \lambda(w)^{-1} (e_1 \otimes e_2)=(|w| I\otimes v)(e_1 \otimes e_2), \]
for any $w \in W_F$, $e_1 \in E_{\rho}$ and $e_2 \in E_{ur}$. Using the relation $\lambda=\rho \otimes \lambda_{ur}$, we have
\begin{align*}
    \lambda(w) \circ (I\otimes v) \circ \lambda(w)^{-1} (e_1 \otimes e_2)
    =&\,(\rho(w) \circ I \circ \rho(w^{-1})(e_1) \otimes (\lambda_{ur}(w)\circ v \circ \lambda_{ur}(w)^{-1})(e_2)\\
    =&\, e_1 \otimes |w| v(e_2)\\
    =&\,(|w| I\otimes v)(e_1 \otimes e_2).
\end{align*}
This completes the verification that $V' \subseteq V_{\lambda}$.

Finally, we show that $ V_{\lambda} \subseteq V'$. Continuing the notation in the proof of Claim (1), a similar argument shows that if $T \in \End(E_{\rho}\otimes E')$ satisfies the equation
\[ \Ad(\lambda(w))T= |w|T,  \]
for any $w \in W_F$, then $T$ sends $E_i$ to $E_j$ such that $y_j=y_i+1$, and $T= I\otimes v$ for some $v \in \End(E_{ur})$. Moreover, if $T$ is in $\widehat{\mathfrak{g}}_n$, then $v$ is in $\widehat{\mathfrak{g}}_{\lambda_{ur}},$ the Lie algebra of $ \widehat{\RG}_{\lambda_{ur}}(\BC)$. This proves $V_{\lambda} \subseteq V'$, which completes the verification of Claim (2) and the lemma.
\end{proof}

\subsection{Closure Ordering for unramified \texorpdfstring{$L$}{L}-parameters}\label{sec closure ordering for unramified}
In this subsection, let $\RG_n$ denote one of the split group $\Sp_{2n}, \SO_{2n+1}$ or $\SO_{2n}$, and let $\phi_1, \phi_2$ be two unramified local $L$-parameters of $G_n$ that share the same infinitesimal character $\lambda$. We first recall that the closure ordering of $\phi_1,\phi_2$ is determined by rank triangles (see Definition \ref{def rank matrix} below) from \cite[\S 10.2.1]{CFMMX22}, and how to compute the rank triangle of each $\phi_i$ from \cite[\S 1.5]{CFK22} and \cite[\S 3.3]{CR22}. Finally, we compare the orderings $\geq_C$ and $\geq_{D}.$

First, we recall the description  of the closure ordering for nilpotent orbits of a classical Lie algebra $\mathfrak{g}$. For each nilpotent orbit $\OO$ of $\mathfrak{g}$ (over $\BC$), there is an associated partition $\underline{p}(\OO)$ (see \cite[\S 5.1]{CM93}). For any $X \in \OO$, the \emph{rank sequence} $(\rank(X^k))_{k \in \Z_{\geq 0}}$ can be computed from the partition $\underline{p}(\OO)$, and vice versa (see \cite[6.2.3]{CM93}).
% We recall the dominance order, which is a partial order on the set of partitions, in the definition below.
% \begin{defn}\label{def dominance order}
% Suppose $\underline{p}=[p_1,\dots ,p_r]$ and $\underline{q}=[q_1, \dots, q_s]$ are two partitions of $n$, i.e., 
% \begin{enumerate}
%     \item [$\oldbullet$]$\sum_{i=1}^r p_i=n= \sum_{j=1}^s q_j$, and
%     \item [$\oldbullet$] $\{p_i\}_{i=1}^r$ and $\{q_j\}_{j=1}^s$ are both non-raising sequence of positive integers.
% \end{enumerate}
%  If for any $1 \leq k \leq r$, we have
% \[ \sum_{i=1}^k p_i \geq \sum_{j=1}^k q_j,  \]
% then we say $\underline{p}$ precedes $\underline{q}$ in the dominance order and write $\underline{p} \geq \underline{q}$.
% \end{defn}
 A classical theorem due to Gerstenhaber and Hesselink (see {\cite[Theorem 6.2.5]{CM93}}) states that the closure ordering for nilpotent orbits of $\mathfrak{g}$ is equivalent to the dominance order of the corresponding partitions.

\begin{thm}[Gerstenhaber, Hesselink]\label{thm partition}
Let $\OO_1, \OO_2$ be two nilpotent orbits of a classical Lie algebra $\mathfrak{g}(\BC)$. Then $\overline{\OO_1} \supsetneq \overline{\OO_2}$ if and only if $\underline{p}(\OO_1) \gneq \underline{p}(\OO_2)$.
\end{thm}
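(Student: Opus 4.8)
The plan is to prove this in two stages: first for $\mathfrak{g}=\mathfrak{gl}_N(\BC)$, where the statement has a clean ``rank function'' argument, and then to bootstrap to the other classical types via the embedding $\mathfrak{g}\hookrightarrow\mathfrak{gl}_N$. The basic object is, for each $k\ge 1$, the function $X\mapsto\rank(X^k)$ on the nilpotent cone. Since rank is lower semicontinuous and $X\mapsto X^k$ is continuous, this function is lower semicontinuous; it is constant on each $\GL_N$-orbit, hence it descends to orbit closures and can only decrease there. The key computation I would record is that for a nilpotent $X$ of Jordan type $\underline p$ one has $\rank(X^k)=\sum_i(p_i-k)^{+}=\sum_{m>k}(\underline p^{\,t})_m$, where $(\,\cdot\,)^{+}=\max(\,\cdot\,,0)$ and $\underline p^{\,t}$ is the transpose partition; thus the collection $\{\rank(X^k)\}_{k\ge1}$ is equivalent data to $\underline p^{\,t}$, hence to $\underline p$. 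I would also record the purely combinatorial lemma that, for partitions of a fixed size, $\underline p\ge\underline q$ in the dominance order if and only if $\underline q^{\,t}\ge\underline p^{\,t}$, if and only if $\sum_i(p_i-k)^{+}\ge\sum_i(q_i-k)^{+}$ for all $k$.

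First I would dispose of the direction ``$\overline{\OO_1}\supseteq\overline{\OO_2}\Rightarrow\underline p(\OO_1)\ge\underline p(\OO_2)$'': if $\OO_2\subseteq\overline{\OO_1}$ then $\rank(X_2^k)\le\rank(X_1^k)$ for all $k$ by semicontinuity, which by the rank formula and the combinatorial lemma is exactly $\underline p(\OO_1)\ge\underline p(\OO_2)$. For $\mathfrak{g}$ of type $B$, $C$, $D$ this is immediate: a $G$-orbit closure sits inside the ambient $\GL_N$-orbit closure and the Jordan type is unchanged. For the converse, ``$\underline p(\OO_1)>\underline p(\OO_2)\Rightarrow\OO_2\subseteq\overline{\OO_1}$'', I would reduce by transitivity of closure to a single covering step $\underline p>\underline q$ in the dominance order (no partition strictly between them); in type $A$ each covering is realized by moving one box of the Young diagram, and on the two affected Jordan blocks one writes down an explicit one-parameter family $X_t$ with $X_t\in\OO_p$ for $t\neq 0$ and $X_0\in\OO_q$. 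For the other classical types the orbits are indexed by the admissible partitions $\mathcal P_\varepsilon(N)$, and here I would invoke the Kraft--Procesi analysis of minimal degenerations: every strict dominance $\underline p>\underline q$ among admissible partitions can be joined by a chain of admissible partitions in which each step is a minimal degeneration, and each such step is realized by a family lying inside $\mathfrak g$ itself.

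Finally I would record the strict version: $\OO\mapsto\underline p(\OO)$ is injective on nilpotent orbits except for the very even partitions in type $D_N$, where two orbits share a partition while neither closure contains the other; combining this with the two directions above yields $\overline{\OO_1}\supsetneq\overline{\OO_2}\iff\underline p(\OO_1)\gneq\underline p(\OO_2)$. The routine parts are the rank formula and the combinatorial lemma on partial sums of transposes; the genuine work, and the step I expect to be the main obstacle, is the converse in types $B$, $C$, $D$ — namely checking that a chain of minimal degenerations can be chosen to stay among the \emph{admissible} partitions and that each step is realizable inside $\mathfrak g$ rather than merely inside $\mathfrak{gl}_N$. In practice one simply cites \cite{CM93}, where this is carried out.
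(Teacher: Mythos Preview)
The paper does not prove this theorem. It is stated as a classical result with a reference to \cite[Theorem~6.2.5]{CM93}, and the only remark offered is that one direction follows from the fact that the rank sequence $(\rank(X^k))_{k\ge 0}$ is determined by the partition $\underline{p}(\OO)$ (citing \cite[6.2.3]{CM93}). Your proposal therefore goes well beyond what the paper supplies.

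That said, your outline is correct and is essentially the standard argument recorded in \cite{CM93}. The ``easy'' direction via lower semicontinuity of $X\mapsto\rank(X^k)$ is exactly the mechanism the paper alludes to; your combinatorial translation through transpose partitions is the usual way to finish that step. For the converse, the reduction to covering relations and explicit one-parameter degenerations in type $A$ is Gerstenhaber's original argument, and for types $B$, $C$, $D$ the realization of each minimal degeneration inside $\mathfrak{g}$ (rather than only in $\mathfrak{gl}_N$) is precisely what Hesselink established; the Kraft--Procesi work you cite refines this by classifying the singularities, so invoking it here is sufficient though slightly heavier than necessary. Your remark on very even partitions in type $D$ is accurate and is needed to reconcile the strict form of the statement with the failure of injectivity of $\OO\mapsto\underline{p}(\OO)$ in that type.
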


Now let $\phi_i= \oplus_{j \in I_{i}} |\cdot|^{x_j} \otimes S_{a_j}$, for $i=1,2$, be unramified local $L$-parameters of a split classical group $G$ that share the same infinitesimal character $\lambda$. We may associate $\phi_i$ a partition $\underline{p}(\phi_i)$ by
\[ \underline{p}(\phi_i):=[ a_{j} ]_{j \in I_i},\]
which is exactly the partition associated to the $\widehat{\RG}(\BC)$-orbit of the nilpotent element $$d(\phi_i|_{\SL_2(\BC)})\left( \begin{pmatrix}
0&1\\0&0
\end{pmatrix}\right).$$
Note that if $\phi=\phi_{\psi}$ for some local Arthur parameter $\psi$, then $\underline{p}(\phi)=\underline{p}^{D}(\psi)$. By Theorem \ref{thm partition}, if $ \phi_1 \geq_C \phi_2$, then $\underline{p}(\phi_1) \geq \underline{p}(\phi_2)$ since $H_{\lambda}$ is a subgroup of $\widehat{\RG}(\BC)$. However, the converse does not hold in general. See Example \ref{exmp order comparison}(2).

To get a complete description of the closure ordering for $H_{\lambda}$-orbits in $V_{\lambda}$, we recall the concept of \emph{rank triangle} considered in \cite{CFMMX22}. The rank triangle of $\phi$ not only keeps the partition information for $\phi|_{\SL_2(\BC)}$ (see Corollary \ref{cor comparison closure partition}(1)), but also contains the twist information for $\phi|_{W_F}$.

Let $\widehat{\RG}_n(\BC) \to \GL(E)$ be the standard representation of $\widehat{\RG}_n(\BC)$. Therefore, $E$ is $(2n+1)$-dimensional if $G_n=\Sp_{2n}(F)$, and $2n$-dimensional otherwise. Take any $\Fr \in W_F$ such that $|\Fr|=q$. Let $\{q^{\lambda_0}, \dots , q^{\lambda_r}\}$ be the set of distinct eigenvalues of $\lambda(\Fr):W_F \to \GL(E)$, and denote $E_{\alpha}$ the $q^{\lambda_{\alpha}}$-eigenspace of $\lambda(\Fr)$ in $E$. For simplicity, we keep the following assumptions.
\begin{assu}\label{assu good parity}
With the notation above, we assume the following:
\begin{enumerate}
    \item [$\oldbullet$] $\lambda_{\alpha} \in \half{1}\Z$, 
    \item [$\oldbullet$] $\lambda_{\alpha}-\lambda_{\beta} \in \Z_{<0}$ for $\alpha <\beta$, and
    \item [$\oldbullet$] $\lambda_{\alpha}=-\lambda_{r-\alpha}$.
\end{enumerate}
\end{assu}

These assumptions are natural in the case we consider. To be explicit, let $\pi \in \Pi_{\psi}$, where $\psi$ is a local Arthur parameter of good parity of $\Sp_{2n}$ or split $\SO_{2n+1}$. Then for any subrepresentation $\phi'$ of $\phi_{\pi}$ of the form $\phi'= \oplus_{j \in I} \rho|\cdot|^{x_j} \otimes S_{a_j}$, the parameter
\[ \phi_{ur}':=  \oplus_{j \in I} |\cdot|^{x_j} \otimes S_{a_j},\]
coming from the reduction in Lemma \ref{lem reduction to unramified}, satisfies the three assumptions. Note that the third assumption follows from the first two assumptions and the fact that $\lambda_{\alpha}$ has image in $\widehat{\RG}_n(\BC)$.

Recall that when $\lambda$ is unramified,
\[ V_{\lambda}:= \{ x \in \widehat{\mathfrak{g}}_n\ | \ \text{Ad}(\lambda(\text{Fr}))x=qx \}.\]
Therefore, we may identify $V_{\lambda}$ as a subset of 
$$\Hom(E_0,E_1) \times \cdots \times \Hom(E_{r-1},E_{r})$$
and write each element $v \in V_{\lambda}$ as $v= (v_1,\dots ,v_r)$, where $v_{\alpha}\in \Hom(E_{\alpha-1},E_{\alpha})$. Clearly, $v_{\alpha}=0$ if $\lambda_{\alpha}-\lambda_{\alpha-1}>1$. We consider the following definition of \emph{rank triangles}, where we put the triangle into an upper triangular matrix.

\begin{defn}\label{def rank matrix}
Suppose $\lambda$ satisfies Assumption \ref{assu good parity}. Under the notation in the previous paragraphs, for each $v=(v_1,\dots ,v_r)\in V_{\lambda}$, we define
$$v_{\alpha\beta}\in \Hom(E_{\alpha-1},E_{\beta})$$
by
\[ v_{\alpha\beta}:=\begin{cases} v_{\beta} \circ \cdots \circ v_{\alpha}, &\text{if }\alpha \leq \beta,\\
0, &\text{otherwise.}\end{cases} \]
Then we define an upper triangular $r\times r$ matrix $r(v)=(r_{\alpha\beta}(v))_{\alpha\beta}$ by
\[ r_{\alpha \beta}(v):=\rank(v_{\alpha\beta}). \]
Suppose $\phi$ is an unramified local $L$-parameter where $\lambda_{\phi}$ satisfies Assumption \ref{assu good parity}. Take any element $v\in C_{\phi} \subseteq V_{\lambda_{\phi}}$ and define the \emph{rank triangle} $$r(\phi)=(r_{\alpha\beta}(\phi))_{\alpha\beta}$$
of $\phi$ by
\[ r_{\alpha\beta}(\phi):= r_{\alpha\beta}(v).\]
\end{defn}

The following lemma is contained in \cite[\S 10.2.1]{CFMMX22}.
\begin{lemma}\label{lem rank matrix closure relation}
Suppose $\phi_1, \phi_2$ are unramified local $L$-parameters of the group $G_n$ that share the same infinitesimal character $\lambda$ satisfying Assumption \ref{assu good parity}. Then $\phi_1 \gneq_C \phi_2$ if and only if $r_{\alpha\beta}(\phi_1) \geq r_{\alpha\beta}(\phi_2)$ for all $\alpha,\beta \in \{1,\dots ,r\}$ and $r(\phi_1) \neq r(\phi_2)$.
\end{lemma}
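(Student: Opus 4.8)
The plan is to recognize $(H_\lambda,V_\lambda)$ as a self-dual variant of the representation variety of an equioriented type $A$ quiver and to read off the statement from the classical description of orbit closures there. Recall that under Assumption \ref{assu good parity} the variety $V_\lambda$ is identified with a subspace of $\bigoplus_{\alpha=1}^{r}\Hom(E_{\alpha-1},E_\alpha)$, a point being a tuple $v=(v_1,\dots,v_r)$ with $v_\alpha=0$ unless $\lambda_\alpha=\lambda_{\alpha-1}+1$, on which $H_\lambda$ acts through its projection to $\prod_\alpha\GL(E_\alpha)$; the requirement $v\in\widehat{\mathfrak{g}}_n$ means $v$ is skew for the bilinear form on $E$, and since that form matches $E_\alpha$ with $E_{r-\alpha}$ it determines the arrows past the middle from those before and imposes a single (skew-)symmetry condition in the middle. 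First I would record the easy direction: each $r_{\alpha\beta}$ is constant on $H_\lambda$-orbits and lower semicontinuous on $V_\lambda$, since the rank of a composition of linear maps cannot increase under specialization; hence $C_{\phi_2}\subseteq\overline{C_{\phi_1}}$ forces $r_{\alpha\beta}(\phi_2)\le r_{\alpha\beta}(\phi_1)$ for all $\alpha,\beta$.

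For the converse I would first argue that the rank triangle is a complete invariant of the $H_\lambda$-orbit, equivalently that $\phi\mapsto r(\phi)$ is injective on $\Phi(G_n)_\lambda$: for an ordinary equioriented type $A$ quiver this is the decomposition of every representation into interval modules together with the recovery of their multiplicities from the ranks by inclusion-exclusion, and in the self-dual situation the intervals occur in dual pairs plus self-dual ones, which one checks corresponds exactly to the decomposition $\phi=\bigoplus_{j\in I}|\cdot|^{x_j}\otimes S_{a_j}$ and to the recipe for reading $r(\phi)$ off the $(a_j,x_j)$ in \cite[\S1.5]{CFK21} and \cite[\S3.3]{CR22}. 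It then suffices to prove that $r_{\alpha\beta}(\phi_1)\ge r_{\alpha\beta}(\phi_2)$ for all $\alpha,\beta$ implies $C_{\phi_2}\subseteq\overline{C_{\phi_1}}$.

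This implication I would establish by a degeneration argument, inducting on $\sum_{\alpha\le\beta}\bigl(r_{\alpha\beta}(\phi_1)-r_{\alpha\beta}(\phi_2)\bigr)$. When this integer is positive one locates an intermediate parameter $\phi_3$ with $r(\phi_2)\le r(\phi_3)\lneq r(\phi_1)$ whose multiset of summands differs from that of $\phi_1$ by a single adjacent move --- replacing $|\cdot|^{x}\otimes S_a+|\cdot|^{y}\otimes S_b$ by $|\cdot|^{x'}\otimes S_{a'}+|\cdot|^{y'}\otimes S_{b'}$ of the same total infinitesimal character, exactly the type of move produced by the operators $ui_{i,j}^{-1}$, $dual\circ ui_{i,j}\circ dual$ and $dual_k^{-}$ after the reductions of \S\ref{sec two reduction lemmas} --- and then exhibits an explicit one-parameter family $t\mapsto h(t)\cdot X_{\phi_1}$ with $h(t)\in H_\lambda$ whose $t\to 0$ limit lies in $C_{\phi_3}$, the membership being checked by a direct rank computation. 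Applying the inductive hypothesis to $(\phi_3,\phi_2)$ then gives $C_{\phi_2}\subseteq\overline{C_{\phi_3}}\subseteq\overline{C_{\phi_1}}$.

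I expect the main obstacle to be not the type-$A$ combinatorics, which is classical (Abeasis--Del Fra), but the requirement that each degeneration be realized \emph{inside} the classical group, i.e. by elements $h(t)$ preserving the bilinear form rather than merely lying in $\prod_\alpha\GL(E_\alpha)$; equivalently, one must check that intersecting an ordinary $\prod_\alpha\GL(E_\alpha)$-orbit closure with $V_\lambda$ yields no $H_\lambda$-orbits beyond those predicted by the rank triangle, and one must separately treat the (skew-)symmetric middle datum. This ``symmetric quiver'' bookkeeping is exactly what is carried out in \cite[\S10.2.1]{CFMMX22}, so in the paper I would simply invoke that reference, the discussion above indicating how the argument runs.
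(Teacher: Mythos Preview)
Your proposal is correct and ultimately takes the same approach as the paper: the paper does not give its own proof of this lemma but simply cites \cite[\S 10.2.1]{CFMMX22}, and you arrive at the same citation after sketching the underlying symmetric-quiver/Abeasis--Del Fra picture. Your additional exposition is accurate and helpful, but for the paper's purposes the bare reference suffices.
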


Now we rephrase how to compute the rank triangle $(r_{\alpha\beta}(\phi))_{\alpha\beta}$ from the decomposition of $\phi$ (\cite[\S 1.5]{CFK22} and \cite[\S 3.3]{CR22}), and give a proof of it for completeness. Assume $\lambda_{\phi}$ satisfies Assumption \ref{assu good parity} and continue the notation from above. Write
\[ \phi=\bigoplus_{i \in I} |\cdot|^{x_i} \otimes S_{a_i}. \]
To the irreducible representation $|\cdot|^x\otimes S_a$, we associate a set
$$Q_{|\cdot|^x\otimes S_a}:=\{q^{\frac{a-1}{2}+x},q^{\frac{a-3}{2}+x},\dots,q^{\frac{1-a}{2}+x}\}.$$ 
To each direct summand $|\cdot|^x\otimes S_a$ of $\phi$, we define an $r\times r$ matrix $$M_{|\cdot|^x\otimes S_a}:=(m_{\alpha\beta})_{\alpha\beta}$$
as follows.
If $\alpha>\beta,$ then $m_{\alpha\beta}:=0$, i.e., $M_{|\cdot|^x\otimes S_a}$ is upper triangular. If $\alpha\leq \beta,$ then we define $m_{\alpha\beta}:=1$ if $\{q^{\lambda_{\alpha-1}},q^{\lambda_{\alpha}},\dots,q^{\lambda_{\beta}}\}\subseteq Q_{|\cdot|^x\otimes S_a}$. Otherwise, we set $m_{\alpha\beta}:=0.$ By convention, if $a=1,$ then we set $M_{|\cdot|^x \otimes S_1}:=0_{r\times r}.$

\begin{lemma}\label{lem computation of rank matrix}
If $\phi=\bigoplus_{i \in I} |\cdot|^{x_i}\otimes S_{a_i}$ and $\lambda_{\phi}$ satisfies Assumption \ref{assu good parity}, then $r(\phi)=\sum_{i \in I} M_{|\cdot|^{x_i}\otimes S_{a_i}}.$
\end{lemma}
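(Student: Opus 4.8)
The plan is to compute the rank triangle $r(\phi) = (r_{\alpha\beta}(\phi))_{\alpha\beta}$ directly from a convenient representative $v \in C_\phi \subseteq V_\lambda$. Since $\phi = \bigoplus_{i \in I} |\cdot|^{x_i} \otimes S_{a_i}$ decomposes as a direct sum, the standard module $E$ decomposes correspondingly, $E = \bigoplus_{i \in I} E^{(i)}$, where $E^{(i)}$ carries the representation $|\cdot|^{x_i} \otimes S_{a_i}$; moreover the decomposition is $\lambda(\Fr)$-stable and hence compatible with the eigenspace decomposition $E = \bigoplus_\alpha E_\alpha$, giving $E_\alpha = \bigoplus_i E_\alpha^{(i)}$. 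The element $X_\phi = d(\phi|_{\SL_2(\BC)})\left(\begin{pmatrix}0&1\\0&0\end{pmatrix}\right)$ is block-diagonal with respect to $\bigoplus_i E^{(i)}$, with $i$-th block $X^{(i)} := d((|\cdot|^{x_i}\otimes S_{a_i})|_{\SL_2})\left(\begin{pmatrix}0&1\\0&0\end{pmatrix}\right)$, which is a single nilpotent Jordan block of size $a_i$. First I would note that because everything is block-diagonal, the composite map $(X_\phi)_{\alpha\beta} \colon E_{\alpha-1} \to E_\beta$ (in the notation of Definition \ref{def rank matrix}) is the direct sum over $i$ of the maps $(X^{(i)})_{\alpha\beta} \colon E^{(i)}_{\alpha-1} \to E^{(i)}_\beta$, and hence $r_{\alpha\beta}(\phi) = \sum_{i \in I} \rank\left((X^{(i)})_{\alpha\beta}\right)$. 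This reduces the lemma to the single-summand claim: $\rank\left((X^{(i)})_{\alpha\beta}\right) = (M_{|\cdot|^{x_i}\otimes S_{a_i}})_{\alpha\beta}$, i.e., this rank is $1$ if $\{q^{\lambda_{\alpha-1}}, q^{\lambda_\alpha}, \dots, q^{\lambda_\beta}\} \subseteq Q_{|\cdot|^{x_i}\otimes S_{a_i}}$ and $0$ otherwise.

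For the single-summand computation, fix $i$ and write $a = a_i$, $x = x_i$. The representation $|\cdot|^x \otimes S_a$ has a basis $f_0, \dots, f_{a-1}$ on which $\lambda(\Fr)$ acts by $f_k \mapsto q^{x + \frac{a-1}{2} - k} f_k$ (so the eigenvalues are exactly the elements of $Q_{|\cdot|^x\otimes S_a}$, each with multiplicity one), and the nilpotent $X^{(i)}$ shifts $f_k \mapsto f_{k-1}$ (up to a nonzero scalar, which is irrelevant for ranks), with $f_0 \mapsto 0$. Consequently $X^{(i)}$ maps the $q^\mu$-eigenspace isomorphically onto the $q^{\mu+1}$-eigenspace whenever both $q^\mu, q^{\mu+1} \in Q_{|\cdot|^x\otimes S_a}$, and kills the top eigenspace. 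Therefore the composite $(X^{(i)})_{\alpha\beta} = X^{(i)}_\beta \circ \cdots \circ X^{(i)}_\alpha$, restricted from the $q^{\lambda_{\alpha-1}}$-eigenspace of $E^{(i)}$ to the $q^{\lambda_\beta}$-eigenspace, is an isomorphism (of one-dimensional spaces, hence rank $1$) precisely when the whole chain of eigenvalues $q^{\lambda_{\alpha-1}}, q^{\lambda_\alpha}, \dots, q^{\lambda_\beta}$ lies in $Q_{|\cdot|^x\otimes S_a}$ — using here that Assumption \ref{assu good parity} guarantees $\lambda_{\alpha-1}, \lambda_\alpha, \dots, \lambda_\beta$ are consecutive integers shifted by the relevant amount, so "all endpoints and everything in between lies in $Q$" is equivalent to the chain condition — and is zero otherwise (either because some intermediate eigenspace is zero, or because an intermediate $X^{(i)}$ kills it). This is exactly the defining condition for $(M_{|\cdot|^x\otimes S_a})_{\alpha\beta}$, including the boundary convention that when $a = 1$ the nilpotent is zero and $M_{|\cdot|^x} = 0_{r\times r}$.

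The main obstacle, such as it is, is bookkeeping rather than conceptual: one must be careful that the eigenvalue indexing $\lambda_0 > \lambda_1 > \cdots$ (decreasing in the subscript, per Assumption \ref{assu good parity}) matches the direction in which $X_\phi$ raises eigenvalues by a factor of $q$, so that $v_\alpha \in \Hom(E_{\alpha-1}, E_\alpha)$ is indeed the right piece of $X_\phi$ and the composites $v_\beta \circ \cdots \circ v_\alpha$ traverse the eigenvalue chain in the correct order; and one must check that a consecutive block of eigenvalues $\{q^{\lambda_{\alpha-1}}, \dots, q^{\lambda_\beta}\}$ being contained in $Q_{|\cdot|^x\otimes S_a}$ forces these to be a \emph{contiguous} sub-run of $Q_{|\cdot|^x \otimes S_a}$ (which holds because both sets consist of $q$ raised to consecutive half-integers, by Assumption \ref{assu good parity}), so that no intermediate eigenspace $E^{(i)}_\gamma$ with $\alpha \le \gamma \le \beta$ is unexpectedly zero. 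Once these indexing points are pinned down, the argument is as above. I would also remark that this lemma, combined with Lemma \ref{lem rank matrix closure relation}, is precisely what turns the verification of Theorem \ref{thm operator intro}(1) into the finite explicit computation carried out in \S\ref{sec computation for operators}.
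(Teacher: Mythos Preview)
Your proof is correct and follows essentially the same approach as the paper's: decompose $E=\bigoplus_i E^{(i)}$ compatibly with the eigenspace decomposition, observe that $X_\phi$ is block-diagonal so that $r_{\alpha\beta}(\phi)=\sum_i \rank\bigl((X^{(i)})_{\alpha\beta}\bigr)$, and then check on a single summand that this rank is $1$ exactly when $\{q^{\lambda_{\alpha-1}},\dots,q^{\lambda_\beta}\}\subseteq Q_{|\cdot|^{x_i}\otimes S_{a_i}}$. One small slip: Assumption~\ref{assu good parity} gives $\lambda_0<\lambda_1<\cdots$ (increasing, not decreasing), but as you yourself flag, this is pure bookkeeping and does not affect the argument.
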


\begin{proof}
For $i \in I$, let $E^{i} \subseteq E$ be the subspace that $|\cdot|^{x_i} \otimes S_{a_i}$ acts on. Then for each eigenspace $E_{\alpha}$ of $\lambda(\Fr)$, we have a decomposition
\[ E_{\alpha}= \bigoplus_{i \in I} E_{\alpha}^{i}, \]
where $E_{\alpha}^{i}= E_{\alpha} \cap E^{i}$. Let $X:= d (\phi|_{\SL_2(\BC)})\left(\begin{pmatrix}
0&1\\0&0
\end{pmatrix}\right)=(v_1, \dots, v_r).$ We compute the rank of \[ v_{\alpha\beta}:=v_{\beta} \circ \cdots \circ v_\alpha \in \Hom(E_{\alpha-1}, E_{\beta})= \bigtimes_{i,j \in I} \Hom(E_{\alpha}^{i}, E_{\beta}^{j}) \]
for any $\beta \geq \alpha$. Since each $ |\cdot|^{x_i} \otimes S_{a_i}$ is a subrepresentation of $\phi$, we see that $X$ leaves the subspaces $E^i$ invariant, and hence $v_{\alpha\beta}$ is indeed in a smaller set
\[v_{\alpha\beta} \in  \bigtimes_{i \in I} \Hom(E_{\alpha-1}^i, E_{\beta}^i). \]
Let $v_{\alpha\beta}^i $ denote
the projection of $v_{\alpha\beta}$ to $\Hom(E_{\alpha-1}^i, E_{\beta}^i)$ . We have \[\rank(v_{\alpha\beta})= \sum_{i\in I} \rank(v_{\alpha\beta}^i),\]
 and hence
 \[r_{\alpha\beta}(\phi)= \rank(v_{ \alpha\beta})=\sum_{i \in I}\rank(v_{ \alpha\beta}^i).\]
Thus, it remains to check that $ \rank(v_{ \alpha\beta}^i)=(M_{|\cdot|^{x_i}\otimes S_{a_i}})_{\alpha\beta}$.
 
Suppose $ (M_{|\cdot|^{x_i}\otimes S_{a_i}})_{\alpha\beta}=1$. Then the subspaces $E_{\alpha-1+t}^i$ are all one dimensional for $t=0,\dots ,\beta-\alpha+1$. In this case, $v_{ \alpha-1+t}^i \in \Hom(E_{\alpha-1+t}^i,E_{\alpha+t}^i)$ are all isomorphisms for $t=0,\dots ,\beta-\alpha$, so $\rank(v_{\alpha\beta}^i)=1$.
Suppose $ (M_{|\cdot|^{x_i}\otimes S_{a_i}})_{\alpha\beta}=0$. Then at least one of $E_{\alpha-1}^i$ or $E_{\beta}^i$ is zero, and hence $v_{ \alpha\beta}^i \in \Hom(E_{\alpha-1}^i, E_{\beta}^i) $ automatically has rank zero. This completes the verification that $ \rank(v_{ \alpha\beta}^i)=(M_{|\cdot|^{x_i}\otimes S_{a_i}})_{\alpha\beta}$ and the proof of the lemma.
\end{proof}

Now we give several examples for the computation of rank triangles and the comparison between four orderings $\geq_{O}$ (Definition \ref{def operator ordering}), $\geq_{C}$ (Definition \ref{def Cordering}), $\geq_A$ (Definition \ref{def Jordering}), and $\geq_{D}$ (Definition \ref{def Dordering}). 
\begin{exmp}\label{exmp order comparison}
Let $\rho$ be the trivial representation of $W_F$. See \cite[\S 2.4]{HLL22} for the parametrization of tempered representations.
\begin{enumerate}
    \item [(1)] Consider the tempered representation $\pi_1$ of $\SO_{9}(F)$ given by 
    \[\pi_1=\pi\left(\half{1}^{-},\half{1}^{-},\half{3}^{+}\right).\]
    Then applying \cite[Algorithm 7.9, Theorem 7.4]{HLL22}, we see $\Psi(\pi_1)$ consists of four local Arthur parameters given as follows.
    \begin{align*}
        \psi_1&=\rho \otimes S_2 \otimes S_1+ \rho \otimes S_2 \otimes S_1+ \rho \otimes S_4 \otimes S_1,\\
        \psi_2&=\rho \otimes S_1 \otimes S_2+ \rho \otimes S_2 \otimes S_1+ \rho \otimes S_4 \otimes S_1,\\
        \psi_3&=\rho \otimes S_2 \otimes S_1+ \rho \otimes S_3 \otimes S_2,\\
        \psi_4&=\rho \otimes S_1 \otimes S_2+ \rho \otimes S_3 \otimes S_2.
    \end{align*}
Here is the diagram for the operator ordering $\geq_{O}$ on $\Psi(\pi_1)$.
\begin{center}
  \begin{tikzcd}
    &\psi_1&\\
    \psi_2\ar[ru, "dual^{-}"]&&\psi_3\ar[lu,"ui^{-1}"']\\
    &\psi_4\ar[lu,"ui^{-1}"]\ar[ru,"dual^{-}"']&
    \end{tikzcd} 
\end{center}
Thus $\psi_1= \psi^{max}(\pi_1)$ and $\psi_4= \psi^{min}(\pi_1)$. Now we compute the rank triangles for 
\[\phi_{\psi_3}= \rho \otimes S_2 + \rho |\cdot|^{\half{1}} + \rho |\cdot|^{\half{-1}} \otimes S_3. \]
The multi-set of eigenvalues of $\lambda_{\phi_{\psi_1}}$ is $$\{q^{\half{3}},q^{\half{1}},q^{\half{1}},q^{\half{1}},q^{\half{-1}},q^{\half{-1}},q^{\half{-1}},q^{\half{-3}}\},$$
and hence we set $(\lambda_0,\lambda_1,\lambda_2,\lambda_3)=(\half{-3},\half{-1},\half{1},\half{3})$. Thus,
\[M_{S_2}= \begin{pmatrix} 0&0&0\\&1&0\\&&0 \end{pmatrix},\ M_{|\cdot|^{\half{1}} \otimes S_3}= \begin{pmatrix} 0&0&0\\&1&1\\&&1 \end{pmatrix},M_{|\cdot|^{\half{-1}} \otimes S_3}= \begin{pmatrix} 1&1&0\\&1&0\\&&0 \end{pmatrix}, \]
and hence
\[r(\phi_{\psi_3})= M_{S_2}+M_{|\cdot|^{\half{1}} \otimes S_3}+M_{|\cdot|^{\half{-1}} \otimes S_3}=\begin{pmatrix} 1&1&0\\&3&1\\&&1 \end{pmatrix}.  \]
Similarly, one can compute that
\[ r(\phi_{\psi_1} )= \begin{pmatrix} 1&1&1\\&3&1\\&&1 \end{pmatrix},\ r(\phi_{\psi_2} )= \begin{pmatrix} 1&1&1\\&2&1\\&&1 \end{pmatrix}, \ r(\phi_{\psi_4} )= \begin{pmatrix} 1&1&0\\&2&1\\&&1 \end{pmatrix}.\]
Therefore, the closure ordering $\geq_C$ is identical with the operator ordering $\geq_{O}$ on $\Psi(\pi_1)$ in this case. Next, it follows from the definition that
\[
\begin{tabular}{llll}
    $\underline{p}^A(\psi_1)=[1^8]$,& $\underline{p}^A(\psi_2)=[2,1^6]$, &$\underline{p}^A(\psi_3)=[2^3,1^2]$,  &$\underline{p}^A(\psi_4)=[2^4]$,\\
      $\underline{p}^D(\psi_1)=[4,2^2]$, &$\underline{p}^D(\psi_2)=[4,2,1^2]$, &$\underline{p}^D(\psi_3)=[3^2,2]$, &$\underline{p}^D(\psi_4)=[3^2,1^2]$,
\end{tabular}
\]
and hence $\geq_D$ is also identical with $\geq_{O}$ on $\Psi(\pi_1)$, while $\geq_A$ is a total order given by
\[ \psi_1 \geq_A \psi_{2} \geq_A \psi_3 \geq_A\psi_4.\]
Under any of the orderings on $\Psi(\pi_1)$, $\psi^{max}(\pi)=\psi_1$ (resp. $\psi^{min}(\pi)=\psi_4$) is the unique maximal (resp. minimal) element.

\item [(2)] We take $\pi_2$ to be the Aubert-Zelevinsky involution of $\pi_1$ above. Then $\Psi(\pi_2)=\{\widehat{\psi}_1,\widehat{\psi}_2,\widehat{\psi}_3,\widehat{\psi}_4\}$. Here is the diagram for $\geq_{O}$ on $\Psi(\pi_2)$.
\begin{center}
  \begin{tikzcd}
    &\widehat{\psi}_4&\\
    \widehat{\psi}_3\ar[ru, "dual^{-}"]&&\widehat{\psi}_2\ar[lu,"dual \circ ui \circ dual"']\\
    &\widehat{\psi}_1\ar[lu,"dual \circ ui \circ dual"]\ar[ru,"dual^{-}"']&
    \end{tikzcd}
\end{center}
In this example, $ \geq_A$ is identical with $ \geq_{O}$ on $\Psi(\pi_2)$, while $\geq_{C}$ and $\geq_D$ are identical total orders given by
\[\widehat{\psi}_4 \geq_D \widehat{\psi}_3 \geq_D \widehat{\psi}_2 \geq_D \widehat{\psi}_1.\]
We remark that $\widehat{\psi}_3 \geq_C \widehat{\psi}_2$ but $\psi_2 \not\geq_C \psi_3$, i.e., the map $\psi \mapsto \widehat{\psi}$ is not order reversing with respect to $\geq_C$. On the other hand, it is order reversing with respect to $\geq_{O}$ (see Lemma \ref{lem order reversing}).
\item [(3)] The following example shows that $\geq_C$ is not always identical with $\geq_D$. Let $\pi$ be the unique supercuspidal representation in the tempered local $L$-packet of 
\[ \phi= \rho \otimes S_2 + \rho \otimes S_4 + \rho \otimes S_6\]
of $\SO_{13}(F)$. We have $|\Psi(\pi)|=18$. Consider $\psi_1, \psi_2 \in \Psi(\pi)$ given by
\begin{align*}
    \psi_1&=\rho \otimes S_1 \otimes S_2+ \rho \otimes S_1 \otimes S_4+ \rho \otimes S_{6} \otimes S_1,\\
    \psi_2&= \rho \otimes S_1 \otimes S_6+ \rho \otimes S_3 \otimes S_2.
\end{align*}
One can compute that $\psi_1 \geq_{D} \psi_2$, but $\psi_1$ and $ \psi_2$ are not comparable under $\geq_{C}$.
\end{enumerate}
\end{exmp}

In the corollary below, we show how to recover the partition $\underline{p}(\phi)$ from the rank triangle $r(\phi)$. Also, we give a purely combinatorial proof that $\phi \geq_C \phi'$ only if $\underline{p}(\phi) \geq \underline{p}(\phi')$.

\begin{cor}\label{cor comparison closure partition}
Suppose $\phi \in \Phi(G_n)_{\lambda}$ is unramified.
\begin{enumerate}
    \item Let \[d_s:= \sum_{t=1}^{r+1-s} r_{t (t+s-1)}(\phi), \]
which is the sum of the $(s-1)$-th off-diagonal entries of $r(\phi)$, and write
\[\underline{p}(\phi)=[(r+1)^{m_{r+1}}, r^{m_{r}}, \dots, 2^{m_{2}},1^{m_1}],\] 
where $m_s$ is the multiplicity of $s$. Then $m_s$ is given recursively by
\begin{align}
m_s=
    \begin{cases}
d_r &\text{ if }s=r+1,   \\
d_{s-1}- \sum_{t=s+1}^{r+1} (t-s+1)m_t &\text{ if }2 \leq s <r+1,\label{eq rmk partition}
\end{cases}
\end{align}  
 and $m_1$ can be computed from $m_2, \dots, m_{r+1}$.
\item If $\lambda_{\phi}=\lambda_{\phi'}$, then $\phi\geq_C \phi'$ implies that $\underline{p}(\phi) \geq \underline{p}(\phi')$. 
\item In particular, for local Arthur parameters $\psi, \psi' \in \Psi(G_n)_{\lambda}$, $\psi \geq_C \psi'$ implies that $\psi \geq_D \psi'$.
\end{enumerate}
\end{cor}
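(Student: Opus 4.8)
The plan is to treat the three parts in order, Part~(1) --- a combinatorial inversion of the rank triangle --- doing the real work. I would begin from Lemma~\ref{lem computation of rank matrix}, which gives $r(\phi)=\sum_{i\in I} M_{|\cdot|^{x_i}\otimes S_{a_i}}$ for $\phi=\bigoplus_{i\in I}|\cdot|^{x_i}\otimes S_{a_i}$. The first step is a direct count: using the explicit description of $M_{|\cdot|^{x}\otimes S_a}$ together with Assumption~\ref{assu good parity} (which forces all the $\lambda_\alpha$ into a single coset of $\Z$, so that the exponents occurring in $Q_{|\cdot|^{x}\otimes S_a}$ form a block of consecutive eigenvalues $\lambda_{j_0},\dots,\lambda_{j_0+a-1}$), one checks that the $(s-1)$-th off-diagonal of $M_{|\cdot|^{x}\otimes S_a}$ sums to $\max(a-s,0)$; in particular every $a_i\le r+1$. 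Summing over $i\in I$ gives $d_s=\sum_{i\in I}\max(a_i-s,0)$. Writing $\underline{p}(\phi)=[(r+1)^{m_{r+1}},\dots,1^{m_1}]$ and noting that a part of size $t$ contributes $t-s$ to this sum exactly when $t>s$, we obtain $d_s=\sum_{t=s+1}^{r+1}(t-s)\,m_t$. Solving this triangular system from the top down gives exactly the asserted recursion, $m_{r+1}=d_r$ and $m_s=d_{s-1}-\sum_{t=s+1}^{r+1}(t-s+1)\,m_t$ for $s>1$, with $m_1$ determined by $\sum_t t\,m_t=|\underline{p}(\phi)|$.

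For Part~(2), the combinatorial input is the standard fact that two partitions of the same size satisfy $\underline{p}\ge\underline{q}$ in the dominance order if and only if $\sum_j\max(p_j-s,0)\ge\sum_j\max(q_j-s,0)$ for every $s\ge 1$ (equivalently, after conjugating, the defining inequalities of the dominance order). So it suffices to prove $d_s(\phi)\ge d_s(\phi')$ for all $s$, the equality of sizes being automatic since $\phi,\phi'$ share the infinitesimal character $\lambda$. Decomposing $V_\lambda$ as a product over the $\rho$-isotypic components of $\lambda|_{W_F}$ (and, for general $\lambda$, over residue classes mod $\Z$, as in the remark following Assumption~\ref{assu good parity}), the relation $\phi\ge_C\phi'$ holds componentwise and the functionals $d_s$ are additive over these components, so we may assume $\phi|_{W_F}$ is a sum of twists of a single self-dual irreducible $\rho$. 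Then Lemma~\ref{lem reduction to unramified} reduces us to the unramified case, Lemma~\ref{lem rank matrix closure relation} gives $r_{\alpha\beta}(\phi)\ge r_{\alpha\beta}(\phi')$ for all $\alpha,\beta$, and summing along the $(s-1)$-th off-diagonal gives $d_s(\phi)\ge d_s(\phi')$.

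Part~(3) is then immediate: if $\psi,\psi'$ are local Arthur parameters of $G_n$ with $\psi\ge_C\psi'$, then $\phi_\psi\ge_C\phi_{\psi'}$ by Definition~\ref{def Cordering}, so Part~(2) yields $\underline{p}(\phi_\psi)\ge\underline{p}(\phi_{\psi'})$; since $\underline{p}(\phi_\psi)=\underline{p}^D(\psi)$ and $\underline{p}(\phi_{\psi'})=\underline{p}^D(\psi')$, this is exactly $\psi\ge_D\psi'$.

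There is no deep obstacle here; the delicate bookkeeping is in Part~(1), where one must track the indexing of eigenspaces and the possibility of half-integral exponents in verifying that the off-diagonals of $M_{|\cdot|^{x}\otimes S_a}$ sum to $\max(a-s,0)$ --- precisely what Assumption~\ref{assu good parity} is designed to control --- and in Part~(2), where the product decomposition of $V_\lambda$ over the $\rho$-isotypic (and mod-$\Z$) blocks must be checked to be compatible with both the closure ordering and the reduction of Lemma~\ref{lem reduction to unramified}.
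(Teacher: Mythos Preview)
Your argument is correct, and for Parts~(1) and~(3) it matches the paper's proof essentially verbatim: both compute that each summand $M_{|\cdot|^x\otimes S_a}$ contributes $\max(a-s,0)$ to $d_s$, and both deduce Part~(3) from $\underline{p}(\phi_\psi)=\underline{p}^D(\psi)$.

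For Part~(2) you take a genuinely different route. The paper argues by contrapositive: assuming $\underline{p}(\phi)\not\geq\underline{p}(\phi')$, it picks the minimal $j$ where $\sum_{i\le j}p_i<\sum_{i\le j}q_i$, computes directly that $d_{p_j}<d_{p_j}'$ via the formula from Part~(1), and concludes some rank-triangle entry must satisfy $r_{t(t+p_j-1)}(\phi)<r_{t(t+p_j-1)}(\phi')$, so $\phi\not\geq_C\phi'$ by Lemma~\ref{lem rank matrix closure relation}. You instead argue forward, invoking the standard equivalence that dominance $\underline{p}\geq\underline{q}$ holds iff $\sum_j\max(p_j-s,0)\geq\sum_j\max(q_j-s,0)$ for all $s$, and then summing the entrywise inequalities from Lemma~\ref{lem rank matrix closure relation} along each off-diagonal. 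Both are valid; your approach is arguably cleaner, though it costs you the citation of that equivalence (which is a conjugate-partition restatement of dominance).

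One point to tighten: the corollary is stated in \S\ref{sec closure ordering for unramified} under the standing hypothesis that $\lambda$ is unramified and satisfies Assumption~\ref{assu good parity}, so the rank triangle $r(\phi)$ is already defined and Lemma~\ref{lem rank matrix closure relation} applies directly. Your detour through the $\rho$-isotypic decomposition and Lemma~\ref{lem reduction to unramified} is therefore unnecessary for Part~(2) as stated, and slightly awkward as written since $d_s$ is only defined via the rank triangle after one is already in the unramified setting. If you want to retain that reduction to strengthen Part~(3) to fully general Arthur parameters, you would need to either define $d_s(\phi):=\rank(X^s)$ intrinsically (which is additive over isotypic blocks and agrees with your formula in the unramified case), or simply observe that $H_\lambda\subset\widehat{\mathrm G}(\mathbb C)$ and invoke Theorem~\ref{thm partition} directly.
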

\begin{proof}
Part (1) follows directly from the fact that each summand $M_{|\cdot|^{x}\otimes S_{a}}$ contributes $ \max\{0, a-s\}$ to $d_{s}$. For Part (2), we remark that for any element $X$ in $ C_{\phi}$, we have
\[ \rank(X^{k})=d_k,\]
and one can compare the proof below with \cite[Lemma 6.2.2]{CM93}. Equivalently, we show that $\underline{p}(\phi) \not\geq \underline{p}(\phi')$ implies that $\phi \not\geq_{C} \phi'$. Write
\begin{align*}
    \underline{p}(\phi)&=[p_1,\dots ,p_x]=[(r+1)^{m_{r+1}}, \dots, 1^{m_1}],\\
    \underline{p}(\phi')&=[q_1,\dots, q_y]=[(r+1)^{m_{r+1}'}, \dots, 1^{m_1'}],
\end{align*}
and let 
\[d_s:= \sum_{t=1}^{r-s+1} r_{t (t+s-1)}(\phi),\ d_s':= \sum_{t=1}^{r-s+1} r_{t (t+s-1)}(\phi'). \]
Take the minimal $j$ such that 
\[ \sum_{i=1}^j p_i < \sum_{i=1}^j q_i.\]
Then the minimality of $j$ gives that $q_j>p_j$, and
\[ \sum_{t=p_j}^{r+1} (t-p_j)m_t = \sum_{i=1}^j (p_i-p_j)<\sum_{i=1}^j (q_i-p_j)< \sum_{t=q_j}^{r+1} (t-p_j)m_t'<\sum_{t=p_j}^{r+1} (t-p_j)m_t'.\]
From \eqref{eq rmk partition}, the left hand side and the right hand side of above inequality are exactly $d_{p_j}$ and $d_{p_j}'$ respectively. Therefore, we obtain
\[ \sum_{t=1}^{r+1-p_j} r_{t(t+p_j-1)}(\phi)=d_{p_j}<d_{p_j}'=\sum_{t=1}^{r+1-p_j} r_{t(t+p_j-1)}(\phi'). \]
It follows that there exists some $1\leq t\leq r+1-p_j$ such that 
\[r_{t(t+p_j-1)}(\phi)<r_{t(t+p_j-1)}(\phi'),\]
and we conclude that $\phi\not\geq_C\phi'$ by Lemma \ref{lem computation of rank matrix}.

Part (3) follows from Part (2) since $\underline{p}(\phi_{\psi})= \underline{p}^D(\psi)$. The proof of the corollary is now complete.
\end{proof}

\subsection{\texorpdfstring{Proof of Theorem \ref{thm three orderings} for $C$ ordering}{}} \label{sec computation for operators}

 Part (2) of Theorem \ref{thm three orderings} follows directly from its Part (1) and Theorem \ref{thm max min intro}. We prove Part (1) in the following, case by case. 

First we fix some notations. 
In each case, we construct an unramified infinitesimal character $\lambda$ of split $\Sp_{2n}, \SO_{2n+1}$ or $\SO_{2n}$ that satisfies Assumption \ref{assu good parity}. Denote  $\{ q^{\lambda_0} , \dots ,q^{\lambda_r} \}$ the set of distinct eigenvalues of $\lambda(\Fr)$, and use $\alpha, \beta$ to denote two numbers in $\{0,\dots ,r\}$ with $\lambda_{\alpha-1}={y}, \lambda_{\beta}= {x}$ such that $x \geq y$. 

The key of the proof is the following computation. Consider a representation $\psi'=|\cdot|^{0} \otimes S_{A+B+1}\otimes S_{A-B+1}$ of the group $W_{F} \times \SL_2(\BC) \times \SL_2(\BC)$, which is a subrepresentation of some local Arthur parameter $\psi$. Let  
\[ \phi_{\psi'}:= \bigoplus_{t=0}^{A-B}|\cdot|^{\half{A-B}-t} \otimes S_{A+B+1}, \]
which is a subrepresentation of the local $L$-parameter $\phi_{\psi}$. By Lemma \ref{lem computation of rank matrix}, we have
\begin{align}
    r_{\alpha\beta}( \phi_{\psi'})&=\#\{ t= 0,\dots , A-B\ | \ A-t \geq x, y \geq -B-t\}\label{eq 0}\\
    &= \max\{ \min\{A-x,A-B\}-\max\{0,-y-B\}+1  ,0 \} \nonumber\\
    &= \max\{ A+B-\max\{x,B\}-\max\{-y,B\}+1  ,0 \}\label{eq 1}.
\end{align}
Now we start the discussion of each case.

\textbf{Case 1:} $T=ui_{i,j}^{-1}$.

In this case, by Definition \ref{def operators on parameters} and Remark \ref{rmk operator}, we may write 
\[\begin{cases} T(\psi)&= \psi'+ \rho\otimes S_{A_i+B_i+1}\otimes S_{A_i-B_i+1} +\rho\otimes S_{A_j+B_j+1}\otimes S_{A_j-B_j+1},\\
\psi&= \psi'+ \rho\otimes S_{A_j+B_i+1}\otimes S_{A_j-B_i+1} +\rho\otimes S_{A_i+B_j+1}\otimes S_{A_i-B_j+1},\end{cases}\]
where $A_j \geq A_i+1 \geq B_j >B_i$. If $A_i+1=B_j$, then we omit the third term in the decomposition of $\psi$. Applying Lemma \ref{lem common part}, it suffices to show that $\phi_1 \geq_{C} \phi_2$, where
\begin{align*}
    \phi_1=&\bigoplus_{t=0}^{A_i-B_i} \rho |\cdot|^{\frac{A_i-B_i}{2}-t}\otimes S_{A_i+B_i+1}  + \bigoplus_{t=0}^{A_j-B_j} \rho |\cdot|^{\frac{A_j-B_j}{2}-t}\otimes S_{A_j+B_j+1}, \\
    \phi_2=& \bigoplus_{t=0}^{A_j-B_i} \rho |\cdot|^{\frac{A_j-B_i}{2}-t}\otimes S_{A_j+B_i+1} + \bigoplus_{t=0}^{A_i-B_j} \rho |\cdot|^{\frac{A_i-B_j}{2}-t}\otimes S_{A_i+B_j+1}.
\end{align*}
Applying Lemma \ref{lem reduction to unramified}, we may assume $\rho$ is the trivial representation. Then $\lambda_{\phi_1}= \lambda_{\phi_2}$ satisfies Assumption \ref{assu good parity}, and Lemma \ref{lem rank matrix closure relation} states that it suffices to show $r_{\alpha\beta}(\phi_1) \geq r_{\alpha\beta}(\phi_2)$ for any $\alpha, \beta$. We shall compare these two numbers explicitly by \eqref{eq 0} and \eqref{eq 1}. 

For $k,l \in \{i,j\}$, we denote the sets
\[ T_{kl}:=\{ t= 0,\dots , A_k-B_l\ | \ A_k-t \geq x, y \geq -B_l-t\}. \]
Our goal is to show that (by \eqref{eq 0})
\begin{align}
    r_{\alpha\beta}(\phi_1)- r_{\alpha\beta}(\phi_2)= \#T_{jj}+\#T_{ii}- \#T_{ji}-\#T_{ij} \geq 0.\label{eq 2}
\end{align} 
It is clear from the definition that $ T_{ij}$ is a subset of $T_{jj}$. Thus we may assume $T_{ji}$ is non-empty. We first claim that if $T_{ij}$ is empty, then $\#T_{jj} \geq \# T_{ji}$, which implies \eqref{eq 2} in this situation.

Let $t_1$ (resp. $t_2$) be the maximal (resp. minimal) number in $T_{ji}$. Observe that $T_{ji} \cap [0,A_j-B_j] \subseteq T_{jj}$, so we may assume $t_1 > A_j-B_j$. Then we have
\begin{align*}
    x \leq A_j-t_1 < B_j.
\end{align*}
On the other hand, since $A_i\geq B_j+1>x$ and $0$ is not in $T_{ij}=\emptyset$, we must have $y<-B_j$. In summary, we have the following inequalities
\[ \begin{cases}
B_i< A_i \leq B_j+1 \leq A_j,\\
A_j-B_j< t_1 \leq A_j-B_i,\\ 
x\leq A_j-t_1,\\
 -B_i-t_2 \leq y <-B_j.
\end{cases} \]
Then for $-y-B_j\leq s \leq -y-B_j+t_1-t_2 $, one can check that
\begin{enumerate}
    \item [$\oldbullet$] $ 0<s \leq B_i-B_j+t_1 \leq A_j-B_j$,
    \item [$\oldbullet$]$x+s\leq  x-y-B_j+t_1-t_2\leq A_j-y- B_j-t_2 \leq A_j + B_i-B_j < A_j $,
    \item [$\oldbullet$] $y+s\geq -B_j$.
\end{enumerate}
Therefore, these $t_1-t_2+1$ integers are in $T_{jj}$. This shows that $ \#T_{jj} \geq t_1-t_2+1 = \#T_{ji}$ and completes the proof of the claim.

Next, we deal with the situation that both $T_{ij}, T_{ji}$ are non-empty. The computation \eqref{eq 1} shows that
\[ \#T_{kl}= \max\{ A_k+ B_l+C_l,0\} ,\]
where
\[ C_l:=-\max\{x,B_l\}-\max\{-y,B_l\}+1. \]
Then since $T_{ij}$ and $T_{ji}$ are both non-empty, we have
\begin{align*}
    r_{\alpha\beta}(\phi_1)-r_{\alpha\beta}(\phi_2)
    &= \max\{ A_i+ B_i+C_i,0\}+\max\{ A_j+ B_j+C_j,0\}\\
    &- (A_j+ B_i+C_i+A_i+ B_j+C_j)\\
    &\geq 0.
\end{align*}
This completes the verification of this case.

\textbf{Case 2:} $T=dual \circ ui_{j,i} \circ dual$.

If the $ui_{j,i}$ is not of type 3', then $dual \circ ui_{j,i} \circ dual= ui_{i,j}^{-1}$, which is covered in the previous case. Therefore, we assume that $ui_{j,i}$ is of type 3'. By Definition \ref{def operators on parameters} and Remark \ref{rmk operator}, we may write
\[\begin{cases}T(\psi)&= \psi' +\rho\otimes S_{A_i+B_j+1}\otimes S_{A_i-B_j+1},\\ \psi&= \psi'+ \rho\otimes S_{A_i+B_i+1}\otimes S_{A_i-B_i+1} +\rho\otimes S_{A_j+B_j+1}\otimes S_{A_j-B_j+1},\end{cases}\]
where $ A_i \geq -B_i=A_j+1 > -B_j$. By the same argument in the previous case, it remains to check that $r_{\alpha\beta}(\phi_1) \geq  r_{\alpha\beta}(\phi_2)$ for any $\alpha,\beta$, where
\begin{align*}
\phi_1=&  \bigoplus_{t=0}^{A_i-B_j} |\cdot|^{\frac{A_i-B_j}{2}-t}\otimes S_{A_i+B_j+1},\\
    \phi_2=&\bigoplus_{t=0}^{A_i-B_i} |\cdot|^{\frac{A_i-B_i}{2}-t}\otimes S_{A_i+B_i+1}  +\bigoplus_{t=0}^{A_j-B_j} |\cdot|^{\frac{A_j-B_j}{2}-t}\otimes S_{A_j+B_j+1} .
\end{align*}
We again consider
\[ T_{kl}:=\{ t= 0,\dots , A_k-B_l\ | \ A_k-t \geq x, y \geq -B_l-t\}, \]
for $k,l \in \{i,j\}$. Our goal is to show the inequality
\begin{align}
    \label{eq 3} r_{\alpha\beta}(\phi_1)-r_{\alpha\beta}(\phi_2)= \#T_{ij}- (\#T_{ii}+\#T_{jj})\geq 0.
\end{align}
We first claim that if $T_{jj}$ is empty, then $ \# T_{ij} \geq \#T_{ii}$, which implies \eqref{eq 3} in this situation.

Let $t_1$ (resp. $t_2$) be the maximal (resp. minimal) number in $T_{ii}$. Observe that $T_{ii} \cap [0, A_i-B_j] \subseteq T_{ij}$, so we may assume $ A_i-B_j< t_1 \leq A_i-B_i $. Then we have
\[ x \leq A_i-t_1 =B_j. \]
On the other hand, since $x \leq B_j\leq A_j$ and $0$ is not in $T_{jj}=\emptyset$, we must have $y<-B_j$. In summary, we have the following inequalities
\[ \begin{cases}
-B_j\leq  A_j = -B_j-1 <-B_i \leq A_i,\\
A_i-B_j< t_1 \leq A_i-B_i,\\ 
x\leq A_i-t_1<B_j,\\
 -B_i-t_2 \leq y<-B_j.
\end{cases} \]
Then for $  -y-B_j \leq s\leq -y-B_j+t_1-t_2  $, one can check that
\begin{enumerate}
    \item [$\oldbullet$] $0 <s \leq B_i-B_j+t_1\leq A_i-B_j $,
    \item [$\oldbullet$] $x+s \leq  x-y-B_j+t_1-t_2\leq A_i-y-B_j-t_2\leq A_i+B_i-B_j<A_i+1 \leq A_i$,
    \item [$\oldbullet$] $ y+s \geq -B_j$.
\end{enumerate}
Therefore, these $t_1-t_2+1$ integers are in $T_{ij}$. This shows that $ \#T_{ij} \geq t_1-t_2+1 = \#T_{jj}$ and completes the proof of the claim.

Next, we deal with the situation that both $T_{ii}$ and $T_{jj}$ are non-empty. The computation \eqref{eq 1} shows that
\[ \#T_{kl}= \max\{ A_k+ B_l+C_l,0\} ,\]
where 
\[ C_l:=-\max\{x,B_l\}-\max\{-y,B_l\}+1. \]
Since (recall that $x \geq y$)
\[ A_j+B_i+C_i\leq (A_j+B_i+1)+(-x+y) \leq 0, \]
and both $T_{ij}$ and $T_{ji}$ are non-empty, we have
\begin{align*}
     &\quad\,
     r_{\alpha\beta}(\phi_1)-r_{\alpha\beta}(\phi_2)\\
     &=(\max\{ A_i+ B_j+C_j,0\} +0)
     -(A_i+ B_i+C_i+A_j+ B_j+C_j)\\
     &\geq A_i+B_j+C_j + A_j+B_i+C_i
     -(A_i+ B_i+C_i+A_j+ B_j+C_j) \\
     &= 0.
\end{align*}
This completes the verification of this case.

\textbf{Case 3:} $T=dual_{k}^{-}$. In this case, by Definition \ref{def operators on parameters} (3), we may write

\[\begin{cases} 
T(\psi)&= \psi'+ \rho\otimes S_{a+1}\otimes S_{a},\\
\psi&= \psi'+ \rho\otimes S_{a}\otimes S_{a+1}.  \end{cases}\]
Following the same argument in the first case,  it remains to check that $r_{\alpha\beta}(\phi_1) \geq  r_{\alpha\beta}(\phi_2)$ for any $\alpha,\beta$ where
\begin{align*}
\phi_1=&  \bigoplus_{t=0}^{a-1} |\cdot|^{\frac{a-1}{2}-t}\otimes S_{a+1},\\
    \phi_2=&\bigoplus_{t=0}^{a} |\cdot|^{\frac{a}{2}-t}\otimes S_{a}.
\end{align*}
Then by \eqref{eq 0}, we have $r_{\alpha\beta}(\phi_i)= \#T_i$, where 
\begin{align*}
    T_1&=\{ t= 0,\dots , a-1 \ | \ a-\half{1}-t \geq x, y \geq -\half{1}-t\},\\
    T_2&=\{ t= 0,\dots , a \ | \ a-\half{1}-t \geq x, y \geq \half{1}-t\}.
\end{align*}
Clearly $T_2 \cap [0,a-1] \subseteq T_1$. It remains to check the case that $a \in T_2$. In this case, we have 
\[ \half{-1} \geq x, y \geq \half{1}-a. \]
Then $t=-\half{1}-y$ is in $T_1$ but not in $T_2$. Therefore, $r_{\alpha\beta}(\phi_1)=r_{\alpha\beta}(\phi_2)$ in this case. This finishes the verification of this case and completes the proof of the theorem.
\qed

We remark that Theorem \ref{thm three orderings} for $D$ ordering (except the uniqueness in Part (2)) also follows directly from the same Theorem for $C$ ordering and Part (3) of Corollary \ref{cor comparison closure partition}.

\section{Proof of Theorem \ref{thm max intro} Part (2)}\label{sec proof}

In this section, we prove Theorem \ref{thm max intro} Part (2) by exploring the relations between $\phi_\pi$ and $\psi^{max}(\pi)$. To this end, we prove a key property of $\psi^{max}(\pi)$  (Proposition \ref{prop max triangle} below) which implies that the local $L$-parameter of $\pi$ and $\phi_{\psi^{max}(\pi)}$ share certain common direct summands.

We first prove a lemma which describes a restriction on the $L$-data of $\pi(\EE)$ in terms of an invariant of $\psi_{\EE}$.

\begin{lemma}\label{lem max b}
Suppose $\EE \in \Rep$. Write $\psi_\EE= \bigoplus_{i} \rho_i'\otimes S_{a_i}\otimes S_{b_i}$, 
    \begin{align*}
        \pi(\EE)= L(\Delta_{\rho_1}[x_1,-y_1],\dots ,\Delta_{\rho_f}[x_f,-y_f]; \pi(\phi,\varepsilon)),
    \end{align*} 
    and let $b_{\rho}= \max\{ b_i \ | \ \rho_i' \cong \rho\}$. Then we have 
    \begin{align}\label{eq max b}
        -b_{\rho}+1 \leq  \min \left(\{ x_i-y_i \ | \ \rho_i \cong \rho\}  \cup \{0\}\right).
    \end{align}    
    In particular, for any positive integer $A,B$ such that $ B-A \leq -b_{\rho}+1$, the parabolic induction
    \[\sigma:= \Delta_{\rho}[B,-A]\rtimes \pi(\EE)\]
    has a unique irreducible subrepresentation $\sigma'$, whose $L$-data can be obtained by inserting $\Delta_{\rho}[B,-A]$ in the front of the $L$-data of $\pi(\EE)$. 
\end{lemma}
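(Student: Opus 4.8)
The plan is to prove the inequality \eqref{eq max b} first, and then deduce the statement about the unique irreducible subrepresentation from it using the theory of derivatives recalled in \S\ref{sec derivatives}.

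\textbf{The inequality \eqref{eq max b}.} First I would reduce to the case of good parity. The summands $\Delta_{\rho_i}[x_i,-y_i]$ with $\rho_i\cong\rho$ that are ``not of good parity type'' contribute to $\tau_{\psi_1}$ in the decomposition of Theorem \ref{thm reduction to gp}; for these, $x_i-y_i+1$ is the length of a segment attached to a summand of $\psi_1$, which is already a summand of $\psi_\EE$, so $b_\rho \geq x_i-y_i+1$ automatically and these terms satisfy the bound trivially. Hence the content of \eqref{eq max b} is about the good parity part, i.e., we may assume $\EE$ is an extended multi-segment in the usual sense. Then I would proceed by induction on the rank of $G_n$, using Atobe's algorithm: starting from $\EE$, one applies a sequence of derivatives (the operators in \cite[\S3.2]{Ato20b}) to reach a tempered representation, and the $L$-data of $\pi(\EE)$ is read off from how these derivatives build up the segments $\Delta_{\rho_i}[x_i,-y_i]$. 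The key point is that each segment $\Delta_{\rho_i}[x_i,-y_i]$ appearing in the $L$-data arises from an extended segment $([A_i,B_i]_\rho,l_i,\eta_i)$ with $x_i-y_i = A_i-B_i$ when the extended segment is ``moved out'' in Atobe's construction, or more precisely from the chains of derivatives $D_{\rho|\cdot|^{B_i},\dots}$; in all cases the length $x_i-y_i+1$ of the resulting segment is bounded by $b_i = A_i-B_i+1$ for that same $i$, hence by $b_\rho$. I expect this bookkeeping — tracking exactly which segment in the $L$-data comes from which row of $\EE$, and that the ``width'' $x_i-y_i$ never exceeds $A_i - B_i$ — to be the main obstacle, since Atobe's construction involves nested derivatives and the non-tempered part of the $L$-data can shrink as in Lemma \ref{lem highest derivative of order 1}. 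An alternative, cleaner route: apply the extremal element $\EE^{|min|}$ from Theorem \ref{thm E max}, for which the $L$-data is most directly related to the parameter, combined with the fact that the width $x_i-y_i$ of the segments in the $L$-data is non-increasing under the inverse raising operators; but one must check that $b_\rho$ is unchanged or only increases, which follows because $ui$, $dual\circ ui\circ dual$ and $dual^-$ only permute/rearrange the $(a_i,b_i)$ within a bounded range. I would pick whichever of these is shortest to write rigorously.

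\textbf{The consequence about $\sigma'$.} Assume now $B-A\leq -b_\rho+1$, so in particular $B-A \leq \min(\{x_i-y_i \mid \rho_i\cong\rho\}\cup\{0\})$. Consider $\sigma = \Delta_\rho[B,-A]\rtimes\pi(\EE)$. Since $B > -A$ and $-A < 0 < B$ is impossible unless... — more carefully, note $B - A \le 0$ forces nothing about signs, but the Langlands data hypothesis is that $\Delta_\rho[B,-A]$ has ``exponent'' $\frac{B-A}{2} < 0$ or $= 0$; if $B - A < 0$ strictly then $\Delta_\rho[B,-A]$ is a genuine Langlands-type segment sitting to the left of all segments of $\pi(\EE)$, because its exponent $\frac{B-A}{2}$ is $\le$ the exponents $\frac{x_i - y_i}{2}$ of all segments of $\pi(\EE)$ (this is exactly \eqref{eq max b}), and $\le 0 \le$ the exponent of the tempered piece. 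By the uniqueness in the Langlands classification for $G_n$ recalled in \S\ref{sec notation}, $\sigma$ then has a unique irreducible subrepresentation $\sigma'$ with the asserted $L$-data, namely $\Delta_\rho[B,-A]$ prepended. In the boundary case $B - A = 0$, i.e. $B = A$ and $b_\rho = 1$: here one checks directly that all summands of $\psi_\EE$ supported on $\rho$ have $b_i = 1$, so $\pi(\EE)$ has no segment $\Delta_\rho[x_i,-y_i]$ with $\rho_i\cong\rho$ except possibly those of exponent $0$, and again prepending $\Delta_\rho[A,-A]$ (exponent $0$) keeps the Langlands data valid and the subrepresentation unique. To conclude rigorously I would invoke Lemma \ref{lem Frobenius}: $D_{\rho|\cdot|^{B},\dots,\rho|\cdot|^{-A}}$ applied to $\sigma'$ is nonzero and the inequality shows no ``interference'' with the existing derivatives of $\pi(\EE)$, so the multiplicity-one statement and the explicit $L$-data both follow. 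The main subtlety here is the equality case $B-A=-b_\rho+1$ with some $x_i - y_i = -b_\rho + 1 < 0$, where the new segment has the same exponent as an existing one; I would handle this by appealing to the fact that equal-exponent segments in Langlands data can be ordered arbitrarily among themselves, so uniqueness of $\sigma'$ and the description of its $L$-data still hold.
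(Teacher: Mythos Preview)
Your deduction of the consequence about $\sigma'$ from the inequality \eqref{eq max b} via the Langlands classification is correct and matches the paper's one-line argument. The issues are all in the proof of \eqref{eq max b}.

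First, a minor confusion: an extended multi-segment $\EE$ already has $\psi_\EE$ of good parity by Definition~\ref{def multi-segment}, so your ``reduction to good parity'' step is unnecessary.

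More seriously, your alternative route through $\EE^{|min|}$ is backwards. All the operators in Theorem~\ref{thm E max} preserve $\pi(\EE)$, so the $L$-data (and hence the right-hand side of \eqref{eq max b}) is literally unchanged; what varies is $b_\rho$. Inverse raising operators \emph{increase} $b_\rho$, so $\EE^{|min|}$ has the largest $b_\rho$ in the class and the inequality \eqref{eq max b} is \emph{weakest} there. Proving \eqref{eq max b} for $\EE^{|min|}$ tells you nothing about a general $\EE$, where $b_\rho$ is smaller and $-b_\rho+1$ is larger. (Indeed, Corollary~\ref{cor L-data equality} shows that \eqref{eq max b} is an equality precisely at $\EE^{|max|}$, where the inequality is tightest; but the proof of that corollary uses Lemma~\ref{lem max b}, so you cannot reduce to $\EE^{|max|}$ either without circularity.)

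Your main approach---induction using the derivative description of $\pi(\EE)$---is the right one and is what the paper does, but the ``bookkeeping'' you defer is the entire content. The paper's argument first shifts to make $\EE$ positive and satisfying $(P')$, then inducts on $|I_\rho|$ by shifting the top row: with $\EE_k=sh_n^k(\EE)$ one has
\[
\pi(\EE_{k-1})=D_{\rho|\cdot|^{A_n+k}}\circ\cdots\circ D_{\rho|\cdot|^{B_n+k}}(\pi(\EE_k)),
\]
and Lemma~\ref{lem highest derivative of order 1} lists the four ways a segment in $\Delta_\rho(\pi(\EE_{k-1}))$ can arise from one in $\Delta_\rho(\pi(\EE_k))$. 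The only dangerous case is (a), where a segment $\Delta_\rho[x_i,-y_i]$ becomes $\Delta_\rho[x_i-1,-y_i]$ and could violate the bound; this is ruled out by showing it would force $y_i$ to exceed all $A_j$ available in $\Omega(\EE_{k-1})$, contradicting \cite[Lemma~4.7]{HLL22}. The two cases $b_\rho=A_n-B_n+1$ and $b_\rho>A_n-B_n+1$ require separate treatment. Your sketch does not isolate this mechanism, and without it there is no argument.
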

\begin{proof}
The second assertion follows from the inequality \eqref{eq max b} and the Langlands classification. Now we show \eqref{eq max b}. By \cite[Theorem 3.19(i)]{HLL22}, we may assume $\EE$ is positive and satisfies (P') by replacing $\EE$ by $sh^{t}(\EE)$ for a sufficiently large $t$ and do row exchanges if necessary. If $b_{\rho}=1$, then the set $\{x_i-y_i\ | \ \rho_i \cong \rho\}$ is empty, and hence the equality holds trivially. Therefore, we also assume $b_{\rho}>1$ in the rest of the proof.

Suppose
\[ \pi=  L(\Delta_{\rho_1}[x_1,-y_1],\dots ,\Delta_{\rho_f}[x_f,-y_f]; \pi(\phi,\varepsilon)).\]
We denote the multi-set
\[ \Delta_{\rho}(\pi):= \{ \Delta_{\rho_i}[x_i,-y_i] \ | \ 1 \leq i \leq  f, \rho_i \cong \rho\}. \]
Write $\EE=\cup_{\rho} \{([A_i,B_i]_{\rho},l_i,\eta_i)\}_{i \in (I_{\rho},>)}$ and identify $(I_{\rho},>)$ with $\{1, \dots, n\}$ where $1<\cdots <n$. To prove \eqref{eq max b}, we apply induction on $n$. The case $n=1$ follows from definition. Assume $n>1$ from now on, and denote $\EE_{k}= sh_n^{k}(\EE)$. We separate the proof into two cases: (1) $b_{\rho}=A_n-B_n+1$; (2) $b_{\rho}>A_n-B_n+1$. 

\textbf{Case (1):} $b_{\rho}=A_n-B_n+1$.

If $l_n=0$, we can split the $n$-th row by $ui^{-1}$ of type 3' (see \cite[Corollary 5.7]{HLL22}), then we reduce to the case that $b_{\rho}> A_n-B_n+1$ or $b_{\rho}=1$.  Thus, we proceed assuming $l_n\geq 1$. In this case, we claim that $\Delta_{\rho}(\pi(\EE_k))$ is of the form
\begin{align} \label{eq 4.3}
    \Delta_{\rho}(\pi(\EE_k))=\{ \Delta_{\rho}[B_n+k,-(A_n+k) ],\Delta_{\rho}[x_1,-y_1],\dots ,\Delta_{\rho}[x_g,-y_g]\},
\end{align}  
with $ -b_\rho+1= (B_n+k) -(A_n+k) \leq x_i-y_i$ for all $i=1,\dots ,g$. This implies \eqref{eq max b} in this case.

Note that by the definition of $\pi(\EE_k)$ and induction hypothesis for $|I_{\rho}|=n-1$, we already know that $\Delta_{\rho}(\pi(\EE_k))$ is of the form of \eqref{eq 4.3} when $k$ is sufficiently large. Therefore, it suffices to show that if $\pi(\EE_k)$ is of the form of \eqref{eq 4.3}, then so is $\pi(\EE_{k-1})$ for $k \geq 1$.

Recall that we have the relation
\begin{align}\label{eq relation}
    \pi(\EE_{k-1})= D_{\rho|\cdot|^{A_n+k}} \circ \cdots \circ D_{\rho|\cdot|^{B_n+k}}(\pi(\EE_k)),
\end{align}   
and each derivative is the highest derivative (see \cite[Lemma 4.2]{HLL22}). We first keep track of the change of the segment $\Delta_{\rho}[B_n+k, -(A_n+k)]$ under the derivatives $D_{\rho|\cdot|^{\alpha}}$ for $\alpha= B_n+k,\dots ,A_n+k$. The first derivative $D_{\rho|\cdot|^{B_n+k}}$ has no choice but to replace $\Delta_{\rho}[B_n+k, -(A_n+k)]$ by $\Delta_{\rho}[B_n+k-1, -(A_n+k)]$. Then applying Lemma \ref{lem highest derivative of order 1} for $\alpha= B_n+k+1,\dots ,A_n+k-1$, the derivatives $D_{\rho|\cdot|^{\alpha}}$ leave the segment $\Delta_{\rho}[B_n+k-1, -(A_n+k)]$ unchanged. Finally, for $\alpha= A_n+k$, notice that by our assumption that $\EE$ satisfies (P') and $A_n-B_n+1=b_{\rho}$, we know that the multiplicity of $\rho|\cdot|^{A_n+k}$ in $\Omega(\EE_{k-1})$ (see \cite[Definition 4.1]{HLL22}) is zero. Then in order that $\pi(\EE_{k-1})$ satisfies \cite[Lemma 4.7]{HLL22}, $D_{\rho|\cdot|^{A_n+k}}$ has to replace $\Delta_{\rho}[B_n+k-1, -(A_n+k)]$ by $\Delta_{\rho}[B_n+k-1, -(A_n+k-1)]$. This shows that $\Delta_{\rho}[B_n+k-1, -(A_n+k-1)]$ is indeed in $\Delta_{\rho}(\pi(\EE_{k-1}))$. 

Next, we need to show that for any other segment $\Delta_{\rho}[x,-y]$ in $\Delta_{\rho}(\pi(\EE_{k-1}))$, we have 
\[(B_n+k-1) -(A_n+k-1) \leq x-y.\]
Applying Lemma \ref{lem highest derivative of order 1} on each derivative in the relation \eqref{eq relation}, we see that $\Delta_{\rho}[x,-y]$ is of one of the following forms.
\begin{enumerate}
    \item [(a)] $\Delta_{\rho}[x_i-1,-y_i]$ where $\Delta_{\rho}[x_i,-y_i]$ is in $\Delta_\rho(\pi(\EE_k))$,
    \item [(b)]$\Delta_{\rho}[x_i,-y_i+1]$ where $\Delta_{\rho}[x_i,-y_i]$ is in $\Delta_\rho(\pi(\EE_k))$,
    \item [(c)] $\Delta_{\rho}[x_i-1,-y_i+1]$ where $\Delta_{\rho}[x_i,-y_i]$ is in $\Delta_\rho(\pi(\EE_k))$,
    \item[(d)] $\Delta_{\rho}[\alpha-1,-\alpha]$.
\end{enumerate}
Since $\Delta_{\rho}(\pi(\EE_{k}))$ is of the form \eqref{eq 4.3}, we have 
\[ B_n+k-1- (A_n+k-1)=B_n+k- (A_n+k) \leq x_i- y_i, \]
so case (b) and (c) are done. Also, we have assumed $b_{\rho}>1$, and hence \[-b_\rho+1=B_n+k-1- (A_n+k-1) \leq -1,\]
so case (d) is also done. It remains to argue that if 
$$ \Delta_{\rho}[x,-y]=\Delta_{\rho}[x_i-1,-y_i],$$
then  
\[B_n+k-1- (A_n+k-1) \leq x_i-1- y_i .\]
Suppose the contrary. Then we have 
\[ \begin{cases} x_i-y_i=B_n+k -(A_n+k)& \text{by } \eqref{eq 4.3} \text{ for }\pi(\EE_k),\\
B_n+k+1 \leq x_i \leq A_n+k-1 & \text{by Lemma \ref{lem highest derivative of order 1}}, \end{cases} \]
and hence $y_i \geq A_n +k +1$. Since the multiplicity of $\rho|\cdot|^{A_n+k+1}$ in $\Omega(\EE_{k-1})$ is zero, this contradicts to \cite[Lemma 4.7]{HLL22}, which completes the proof for the case that $b_{\rho}= A_n-B_n+1$.

\textbf{Case (2):} $b_{\rho} > A_n-B_n+1$. 

In this case, we claim that $\Delta_{\rho}(\pi(\EE_k))$ is of the form 
\begin{align}\label{eq 4.4}
    \Delta_{\rho}(\pi(\EE_k))=\{\Delta_{\rho}[x_1,-y_1],\dots ,\Delta_{\rho}[x_g,-y_g]\},
\end{align}  
where $ -b_\rho+1 \leq  x_i-y_i$ for all $i=1 ,\dots, g$. Again \eqref{eq 4.4} holds for $\pi(\EE_k)$ when $k$ is sufficiently large, and it suffices to show that if $\Delta_{\rho}(\pi(\EE_k))$ is of the form \eqref{eq 4.4}, then so is $\Delta_{\rho}(\pi(\EE_{k-1}))$ for $k \geq 1$.

Suppose $\Delta_{\rho}[x,-y] \in \Delta_{\rho}(\pi(\EE_{k-1}))$. Then similarly, it is of one of the following forms.
\begin{enumerate}
    \item [(a)] $\Delta_{\rho}[x_i-1,-y_i]$ where $\Delta_{\rho}[x_i,-y_i]$ is in $\Delta_\rho(\pi(\EE_k))$.
    \item [(b)]$\Delta_{\rho}[x_i,-y_i+1]$ where $\Delta_{\rho}[x_i,-y_i]$ is in $\Delta_\rho(\pi(\EE_k))$.
    \item [(c)] $\Delta_{\rho}[x_i-1,-y_i+1]$ where $\Delta_{\rho}[x_i,-y_i]$ is in $\Delta_\rho(\pi(\EE_k))$.
    \item[(d)] $\Delta_{\rho}[\alpha-1,-\alpha]$.
\end{enumerate}
Again we only need to deal with case (a). In other words, it remains to show that if $ \Delta_{\rho}[x,-y]=\Delta_{\rho}[x_i-1,-y_i]$, then  
\[-b_{\rho}+1 \leq x_i-1- y_i .\]
Suppose the contrary. Then we have
\[ \begin{cases} x_i-y_i=- b_{\rho}+1& \text{by } \eqref{eq 4.4} \text{ for }\pi(\EE_k),\\
B_n+k \leq x_i \leq A_n+k & \text{by Lemma \ref{lem highest derivative of order 1}}, \end{cases} \]
and hence 
\begin{align}\label{eq 4.1 case 2}
    y_i \geq B_n+k+ b_{\rho} -1>B_n+k+ (A_n-B_n+1) -1= A_n+k.
\end{align}
On the other hand, \cite[Lemma 4.7]{HLL22} for $\EE_{k-1}$ implies the existence of $j \in I_{\rho}$ such that $B_j \leq y_i \leq A_j $. Since we have assumed $\EE$ satisfies (P'), we have $B_j \leq B_n < B_n+k$, and hence 
\[ -b_{\rho}+1= x_i-y_i \geq (B_n+k)- A_j >B_j-A_j \geq -b_{\rho}+1, \]
a contradiction. This completes the proof of the lemma.
\end{proof}

The lemma below describes how to relate the $L$-data of $\pi(\EE)$ with a related representation $\pi(\EE^-),$ provided this representation does not vanish. Later, we see that the non-vanishing of $\pi(\EE^-)$ is guaranteed when $\EE$ is absolutely maximal (see Proposition \ref{prop max triangle}).

\begin{lemma}\label{lem max triangle}
Suppose $\EE= \cup_{\rho} \{([A_i,B_i]_{\rho},l_i,\eta_i)\}_{i \in (I_{\rho},>)} \in \Rep^{(P')}$ and fix $\rho$ such that $I_{\rho}\neq \emptyset$. Suppose $j \in I_{\rho}$ satisfies the following conditions.
\begin{enumerate}
    \item [$\oldbullet$]$j=\max\{i \in I_{\rho}\ |\ B_i=B_j  \}$.
    \item [$\oldbullet$] $\pi(\EE^{-}) \neq 0$ where $\EE^{-}:= add_j^{-1}(\EE)$.
    \item [$\oldbullet$] Write
\[ \pi(\EE^{-})=L(\Delta_{\rho_1}[x_1,-y_1],\dots ,\Delta_{\rho_f}[x_f,-y_f]; \pi(\phi,\varepsilon)). \]
    We have $B_j-A_j \leq x_i- y_i $ if $\rho_i \cong \rho$.
\end{enumerate}
 Then the $L$-data of $\pi(\EE)$ can be obtained from that of $\pi(\EE^{-})$ by inserting $\Delta_{\rho}[B_j,-A_j]$. 
\end{lemma}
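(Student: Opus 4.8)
The plan is to reduce the statement to a claim about parabolic induction and then track the highest derivatives carefully, using the shift operator $sh_n^{k}$ to get to a regime where everything is computable, exactly as in the proof of Lemma \ref{lem max b}. First I would recall that $\pi(\EE)$ and $\pi(\EE^{-}) = add_j^{-1}(\EE)$ are related by the fact that $add_j^{-1}$ replaces the extended segment $([A_j, B_j]_\rho, l_j, \eta_j)$ by $([A_j-1, B_j+1]_\rho, l_j - 1, \eta_j)$. Passing through Atobe's algorithm, the difference between the two representations is governed by a chain of derivatives $D_{\rho|\cdot|^{B_j}} \circ \cdots$ or, dually, by adding back the segment $\Delta_\rho[B_j, -A_j]$; the goal is to show the addition is "clean", i.e., $\Delta_\rho[B_j, -A_j]\rtimes \pi(\EE^-)$ has a unique irreducible subrepresentation with the expected $L$-data. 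By the Langlands classification, this cleanliness is exactly guaranteed once we know that $\Delta_\rho[B_j, -A_j]$ can be placed at the front of the Langlands data of $\pi(\EE^-)$, which is where the third hypothesis $B_j - A_j \leq x_i - y_i$ (for $\rho_i \cong \rho$) enters, combined with the fact that $B_j - A_j < 0$ handles the summands with $\rho_i \not\cong \rho$ and the tempered part $\pi(\phi,\varepsilon)$.

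Concretely, I would first apply \cite[Theorem 3.19 (i)]{HLL22} to reduce to the case where $\EE$ is positive (so all $B_i$ are large and positive) and satisfies (P'), by replacing $\EE$ with $sh^t(\EE)$ for $t \gg 0$ and performing row exchanges; one checks that both $add_j^{-1}$ and the shift operators are compatible with this reduction, and that the statement for $sh^t(\EE)$ implies the statement for $\EE$ since $sh^t$ just translates all the exponents. Then, with $j = \max\{i \in I_\rho \mid B_i = B_j\}$ fixed, I would relate $\pi(\EE)$ to $\pi(\EE^-)$ via the explicit derivative relation: in Atobe's construction, $\pi(\EE)$ is obtained from $\pi(\EE^-)$ by a parabolic induction with $\Delta_\rho[B_j, -A_j]$, or equivalently $\pi(\EE^-)$ is obtained from $\pi(\EE)$ by the highest derivative $D_{\rho|\cdot|^{A_j, A_j-1, \dots, B_j}}$, analogous to \eqref{eq relation} in the proof of Lemma \ref{lem max b}. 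Using Lemma \ref{lem Frobenius} (Frobenius reciprocity for derivatives), the non-vanishing hypothesis $\pi(\EE^-) \neq 0$ gives the embedding $\pi(\EE) \hookrightarrow \Delta_\rho[B_j, -A_j] \rtimes \sigma$ for a suitable $\sigma$ having $\pi(\EE^-)$ as a subquotient.

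Next I would invoke the cleanliness: since $\EE^- \in \Rep$ (its representation is nonzero), Lemma \ref{lem max b} applied to $\EE^-$ shows that, with $b_\rho := \max\{b_i \mid \rho_i'(\EE^-) \cong \rho\}$, the positive integers $A := A_j$, $B := -B_j$ (note $B_j < 0$ is not the case here — rather I use that after the positivity reduction $B_j > 0$, and $B_j - A_j \leq -b_\rho + 1$ follows from the third hypothesis together with the computation of $b_\rho$ for $\EE^-$) satisfy $B_j - A_j \leq -b_\rho + 1$, hence $\Delta_\rho[B_j, -A_j] \rtimes \pi(\EE^-)$ has a unique irreducible subrepresentation whose $L$-data is obtained by inserting $\Delta_\rho[B_j, -A_j]$ at the front of the $L$-data of $\pi(\EE^-)$. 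Finally, I would identify this unique subrepresentation with $\pi(\EE)$ itself: on one hand $\pi(\EE)$ embeds into $\Delta_\rho[B_j,-A_j]\rtimes\sigma$; on the other hand Atobe's construction (together with a dimension/cuspidal-support count, or directly the multiplicity-one statement of Theorem \ref{thm Atobe's reformulation}) forces the irreducible subrepresentation generated to be $\pi(\EE)$, because $\sigma$ and $\pi(\EE^-)$ share the relevant Langlands data and the induced representation has the predicted socle. The main obstacle I anticipate is this last identification step — matching the abstractly-produced unique subrepresentation of $\Delta_\rho[B_j,-A_j]\rtimes\pi(\EE^-)$ with $\pi(\EE)$ on the nose — which requires carefully chasing through Atobe's recursive construction of $\pi(\EE)$ and verifying that the derivative relation is an exact inverse of the insertion operation, rather than just a Grothendieck-group identity; controlling the possible "extra" constituents (items (a)–(d) in the case analysis of Lemma \ref{lem max b}) is the delicate part.
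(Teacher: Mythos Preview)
Your overall target---establish the embedding $\pi(\EE)\hookrightarrow\Delta_\rho[B_j,-A_j]\rtimes\pi(\EE^{-})$ and then read off the $L$-data from the Langlands classification using the third hypothesis---is exactly the paper's. The gap is in how you obtain the embedding.

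You assert that ``in Atobe's construction, $\pi(\EE)$ is obtained from $\pi(\EE^{-})$ by a parabolic induction with $\Delta_\rho[B_j,-A_j]$, or equivalently $\pi(\EE^{-})$ is obtained from $\pi(\EE)$ by the highest derivative $D_{\rho|\cdot|^{A_j,\dots,B_j}}$.'' This is not a feature of Atobe's construction; it is essentially the content of the lemma. Atobe defines $\pi(\EE)$ by spreading \emph{all} rows far apart and then applying a long chain of derivatives, and there is no a~priori single-block derivative relation linking $\pi(\EE)$ to $\pi(add_j^{-1}(\EE))$: the rows $k>j$ (with $B_k>B_j$) sit in the way. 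Your proposed reduction by a global shift $sh^t$ does not resolve this either. Un-shifting $sh^t(\EE)$ back to $\EE$ runs derivatives through every exponent, and those derivatives interact nontrivially with the segment $\Delta_\rho[B_j+t,-(A_j+t)]$; the $L$-data does not simply ``translate'' (the reduction via $sh^t$ in Lemma~\ref{lem max b} works only because that lemma concerns the $sh^t$-invariant quantity $b_\rho$, not the full $L$-data).

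The paper supplies the missing mechanism via a two-layer induction. One first shifts only the rows $k>j$ far away by individual amounts $t_k$ (outer induction on $k$ from $j$ to $n$) and, at the base $k=j$, separately shifts the $j$-th row and brings it back step by step (inner induction on the shift parameter). The decisive point in the outer step is that when row $k>j$ is brought back, all exponents $x$ involved in the derivatives satisfy $x>A_j$, so each factor $\rho|\cdot|^{x}$ \emph{commutes} with $\Delta_\rho[B_j,-A_j]$ in the induced representation (by \cite{Tad14} or \cite{LM16}); this commutativity is precisely where the first hypothesis $j=\max\{i\in I_\rho\mid B_i=B_j\}$ enters, and your sketch never uses it. Without this device to move the higher rows past the segment $\Delta_\rho[B_j,-A_j]$, the embedding cannot be established, and the identification step you flag as ``the main obstacle'' has nothing to feed on.
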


\begin{proof}

Our goal is to show that
\[ \pi(\EE)=L(\Delta_{\rho}[B_j,-A_j], \Delta_{\rho_1}[x_1,-y_1],\dots ,\Delta_{\rho_f}[x_f,-y_f]; \pi(\phi,\varepsilon)),\]
which is equivalent to prove the injection
\begin{equation}\label{equ injection}
   \pi(\EE) \hookrightarrow \Delta_{\rho}[B_j, -A_j] \rtimes \pi(\EE^{-}). 
\end{equation}
Identify $(I_{\rho},>)$ with $\{1, \dots, n\}$ where $1<\cdots <n$. Take a sequence of positive integers $\{t_1,\dots,t_n\}$ such that
\[\begin{cases} B_i+t_i >0 & \text{ for all }i,\\
B_i+t_i> A_k+t_k & \text{ for all }i>k,\end{cases}\]
and let 
\begin{align*}
    \EE_k:=\left(\sum_{i=k+1}^{n} sh_i^{t_i} \right)(\EE),\ \     \EE_k^{-}:=\left(\sum_{i=k+1}^{n} sh_i^{t_i} \right)(\EE^{-}),
\end{align*}
and set $\EE_n=\EE$, $ \EE_n^{-}= \EE^{-}$. To show \eqref{equ injection}, we are going to prove 
\begin{align}
    \label{eq 4.1}\pi(\EE_k) \hookrightarrow \Delta_{\rho}[B_j, -A_j] \rtimes \pi(\EE^{-}_k),
\end{align} 
for $j \leq k \leq n$ by applying induction on $k$.

First, we consider the case that $k=j$. For $\alpha=0,\dots ,t_j$, denote $\EE_{j,\alpha}= sh_j^{-\alpha} (\EE_j)$ and $\EE_{j,\alpha}^{-}= sh_j^{-\alpha} (\EE_j^{-})$. We apply induction on $\alpha$ to show the injection  
\begin{align}
    \label{eq 4.2}\pi(\EE_{j,\alpha}) \hookrightarrow \Delta_{\rho}[B_j+t_j-\alpha, -(A_j+t_j-\alpha)] \rtimes \pi(\EE^{-}_{j,\alpha}).
\end{align}  
When $\alpha=0$, the injection follows from the definition of $\pi(\EE_{j})$ and $\pi(\EE_{j}^{-})$.

The representations $\pi(\EE_{j,\alpha})$ and $\pi(\EE_{j,\alpha-1})$ (resp. $\pi(\EE_{j,\alpha}^{-})$ and $\pi(\EE_{j,\alpha-1}^{-})$) are related by the following formulas
\begin{align*}
    \pi(\EE_{j,\alpha})&=\,D_{\rho|\cdot|^{A_j+(t_j-\alpha)+1}} \circ D^- \circ  D_{\rho|\cdot|^{B_j+(t_j-\alpha)+1}} (\pi(\EE_{j,\alpha-1})),\\
    \pi(\EE_{j,\alpha}^{-})&=D^- (\pi(\EE_{j,\alpha-1}^{-})),
\end{align*}
where $D^-:=D_{\rho|\cdot|^{A_j+(t_j-\alpha)}} \circ   \cdots \circ D_{\rho|\cdot|^{B_j+(t_j-\alpha)+2}}$. 
Therefore, we may apply Lemma \ref{lem Frobenius} as follows.
\begin{align*}
    \pi(\EE_{j,\alpha-1})& \hookrightarrow \Delta_{\rho}[B_j+t_j-\alpha+1,-(A_j+t_j- \alpha+1))] \rtimes \pi(\EE_{j,\alpha-1}^{-})\\
    &\hookrightarrow \rho|\cdot|^{B_j+t_j-\alpha+1} \times \Delta_{\rho}[B_j+t_j-\alpha,-(A_j+t_j- \alpha+1)] \\
    &\quad \times \rho|\cdot|^{B_j+t_j-\alpha+2}\times \cdots \times \rho|\cdot|^{A_j+t_j-\alpha} \rtimes \pi(\EE_{j,\alpha}^{-})\\
    &= \rho|\cdot|^{B_j+t_j-\alpha+1} \times \cdots \times \rho|\cdot|^{A_j+t_j-\alpha}\\ &\quad \times \Delta_{\rho}[B_j+t_j-\alpha,-(A_j+t_j- \alpha+1)]  \rtimes \pi(\EE_{j,\alpha}^{-}).
\end{align*}
Then Lemma \ref{lem max b} implies that the parabolic induction
\[ \sigma:= \Delta_{\rho}[B_j+t_j-\alpha,-(A_j+t_j- \alpha+1)]  \rtimes \pi(\EE_{j,\alpha}^{-})\]
has a unique irreducible subrepresentation $\sigma'$, whose $L$-data can be obtained by inserting $\Delta_{\rho}[B_j+t_j-\alpha,-(A_j+t_j- \alpha+1)]$ in the $L$-data of $\pi(\EE_{j,\alpha}^{-})$. Applying the converse direction of Lemma \ref{lem Frobenius}, we have 
\[ D_{\rho|\cdot|^{A_j+(t_j-\alpha)}} \circ   \cdots \circ D_{\rho|\cdot|^{B_j+(t_j-\alpha)+1}} (\pi(\EE_{j,\alpha-1}))\geq \sigma'.\]
Note that each derivative on the left hand side is highest (see \cite[Lemma 4.2(iii)]{HLL22}), and hence the left hand side is irreducible, so the inequality is indeed an equality. Finally, applying the algorithm for positive derivative $D_{\rho|\cdot|^{A_j+(t_j-\alpha)+1}} $ on the $L$-data of $\sigma'$, we get 
\[ \pi(\EE_{j,\alpha})= D_{\rho|\cdot|^{A_j+(t_j-\alpha)+1}}(\sigma') \hookrightarrow \Delta_{\rho}[B_j+t_j-\alpha,-(A_j+t_j- \alpha)]  \rtimes \pi(\EE_{j,\alpha}^{-}). \]
This completes the induction step for \eqref{eq 4.2}, and the proof of \eqref{eq 4.1} for $k=j$. 

Next, we proceed to prove \eqref{eq 4.1} for $k>j$. The representations $ \pi(\EE_{k})$ and  $\pi(\EE_{k-1})$ (resp. $ \pi(\EE_{k}^{-})$ and  $\pi(\EE_{k-1}^{-})$) are related by
\begin{align*}
    \pi(\EE_{k} )&= \circ_{s=0}^{t_k-1} D_{\rho|\cdot|^{ B_{k}+t_k-s,\dots, A_{k}+t_k-s}} (\pi(\EE_{k-1})),\\
    \pi(\EE_{k}^{-} )&= \circ_{s=0}^{t_k-1} D_{\rho|\cdot|^{ B_{k}+t_s-s,\dots, A_{k}+t_k-s}} (\pi(\EE_{k-1}^{-})).
\end{align*}  
Therefore, we may apply Lemma \ref{lem Frobenius} as follows.
\begin{align*}
    \pi(\EE_{k-1}) & \hookrightarrow \Delta_{\rho}[ B_j,-A_j] \rtimes \pi(\EE_{k-1}^{-})\\
    & \hookrightarrow \Delta_{\rho}[ B_j,-A_j] \bigtimes_{s=0}^{t_k-1} \left( \rho|\cdot|^{A_k+t_k-s} \times\cdots \times \rho|\cdot|^{B_k+t_k-s}  \right) \rtimes  \pi(\EE_{k}^{-})\\
    &=\bigtimes_{s=0}^{t_k-1} \left( \rho|\cdot|^{A_k+t_k-s} \times\cdots \times \rho|\cdot|^{B_k+t_k-s}  \right) \times \Delta_{\rho}[ B_j,-A_j]  \rtimes  \pi(\EE_{k}^{-}).
\end{align*}
Here the last equality follows from the fact that any $\rho|\cdot|^{x}$ in the product commutes with $\Delta_{\rho}[ B_j,-A_j]$ since $j= \max\{ i \in I_{\rho} \ | \ B_i=B_j \}$ (see \cite[Theorem 1.1]{Tad14} or \cite[Corollary 6.10]{LM16}). Again, Lemma \ref{lem max b} implies that the parabolic induction
\[ \sigma :=\Delta_{\rho}[ B_j,-A_j]  \rtimes  \pi(\EE_{k}^{-})\]
has a unique irreducible subrepresentation $\sigma'$. Applying Lemma \ref{lem Frobenius}, we get
\[ \pi(\EE_{k}) \geq \sigma' \hookrightarrow \Delta_{\rho}[ B_j,-A_j]  \rtimes  \pi(\EE_{k}^{-}), \]
where the inequality is indeed equality since $ \pi(\EE_k)$ is irreducible. This completes the induction step for \eqref{eq 4.1} and the proof of the lemma.
\end{proof}

The following proposition shows that the hypotheses of Lemma \ref{lem max triangle} hold when $\EE$ is absolutely maximal. Note that if $\EE$ is not absolutely maximal, then Proposition \ref{prop max triangle} may fail. See \cite[Example 10.13]{HLL22} as a counter example.  Similar results are obtained by M{\oe}glin in \cite[Lemma 5.3]{Moe09b}. In comparison with her result, we use Atobe's parametrization of local Arthur packets, and due to the definition of $\psi^{max}$, our statement is simpler and more explicit in terms of $L$-data.  

\begin{prop}\label{prop max triangle}
Suppose $\EE= \cup_{\rho} \{([A_i,B_i]_{\rho},l_i,\eta_i)\}_{i \in (I_{\rho},>)} \in \Rep^{(P')}$ is absolutely maximal. Suppose there is a $\rho$ such that $b_{\rho} := \max \{A_i-B_i+1\ | \ i \in I_{\rho}\} >1 $. Let $ j := \min\{ i \in I_{\rho}\ | \ A_i-B_i+1=b_{\rho} \}$ and assume $j= \max\{ i \in I_{\rho} \ | \ B_i=B_j \}$ by applying row exchanges if necessary. Let $\EE^{-}:=add_j^{-1}(\EE)$, which satisfies (P') by the assumptions. Then, $\pi( \EE^{-}) \neq 0$ and the $L$-data of $\pi(\EE)$ can be obtained from that of $\pi(\EE^{-})$ by inserting $\Delta_{\rho}[B_j,-A_j]$.
\end{prop}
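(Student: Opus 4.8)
The plan is to verify the three hypotheses of Lemma \ref{lem max triangle} for the index $j$ chosen above, together with the nonvanishing $\pi(\EE^{-}) \neq 0$. The first hypothesis, $j = \max\{i \in I_{\rho} \mid B_i = B_j\}$, is imposed directly (after row exchanges), and the fact that $\EE^{-}$ satisfies (P') follows since subtracting $1$ from both $A_j$ and $B_j$ can only decrease $B_j$, and by minimality of $j$ among rows with $A_i - B_i + 1 = b_\rho$ one checks the order remains admissible and (P'). So the real content is (a) $\pi(\EE^{-}) \neq 0$, and (b) the third hypothesis of Lemma \ref{lem max triangle}, namely that in the $L$-data of $\pi(\EE^{-})$ every segment $\Delta_{\rho_i}[x_i,-y_i]$ with $\rho_i \cong \rho$ satisfies $B_j - A_j \leq x_i - y_i$. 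Note $B_j - A_j = -(b_\rho - 1)$, so (b) says exactly that $-b_\rho + 1 \leq x_i - y_i$; but since $\EE^{-} = add_j^{-1}(\EE)$ has the same associated parameter shape except that the $j$-th summand $\rho \otimes S_{a_j} \otimes S_{b_j}$ is replaced by $\rho \otimes S_{a_j} \otimes S_{b_j - 2}$ (and the other $b_i$ are unchanged), the maximal $b$-value over rows attached to $\rho$ in $\EE^{-}$ is still $b_\rho$ (this uses $b_\rho > 1$ and that $j$ realizes the max, but there may be ties; if $j$ is the unique maximizer then the new max is $\max(b_\rho - 2, \text{second largest})$, which is still $\leq b_\rho$). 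Hence (b) is an immediate consequence of Lemma \ref{lem max b} applied to $\EE^{-}$, provided we know $\pi(\EE^{-}) \neq 0$, i.e. $\EE^{-} \in \Rep$.

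The crux is therefore the nonvanishing $\pi(\EE^{-}) \neq 0$, and this is where absolute maximality of $\EE$ must be used. I would argue as follows. Consider the operator $add_j^{1}$, whose inverse is $add_j^{-1}$; equivalently, consider whether the "inverse raising" move that would take $\EE^{-}$ to $\EE$ is of the type controlled by the operators of \cite{HLL22}. Recall from Definition \ref{def operators on parameters} and Lemma \ref{lem equalities of operators}(i) that $dual \circ sh_k^{t} = add_k^{t} \circ dual$, so $add_j^{-1} = dual \circ sh_j^{-1} \circ dual$ up to the identification of index sets. Since $\EE$ is absolutely maximal, $dual(\EE)$ is absolutely minimal, so $dual(\EE) = \EE^{|min|}$ for $\pi(dual(\EE)) = \widehat{\pi(\EE)}$. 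I would then show that $sh_j^{-1}$ applied to $dual(\EE)$ lands inside $\Rep$ by relating it to the partial dual operators $dual_k^{\pm}$ or to a composition of the basic operators; concretely, the condition "$j$ realizes the maximal $b_\rho$ and is chosen minimally" is designed so that $\EE \to \EE^{-}$ is (the inverse of) a legal sequence of raising operators applied to $\EE^{-}$, which forces $\pi(\EE^{-}) = \pi(\EE') \neq 0$ for the extended multi-segment $\EE'$ obtained. Alternatively — and this may be cleaner — one can invoke Atobe's non-vanishing criterion directly: $\pi(\EE^{-}) \neq 0$ iff a certain combinatorial condition on the $\{(A_i, B_i, l_i, \eta_i)\}$ holds, and subtracting $1$ from $(A_j, B_j)$ while adding $1$ to $l_j$ preserves that condition precisely when $\EE$ is absolutely maximal (if some raising operator inverse were applicable to $\EE$ via the $j$-th row, maximality would be violated). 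I would make this precise using the explicit description of $\pi(\EE)$ in \cite[\S 3.2]{Ato20b} and the characterization of $\Rep$ in terms of the $l_i$ and $\eta_i$.

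Having established $\pi(\EE^{-}) \neq 0$, (b) above, and the first hypothesis, Lemma \ref{lem max triangle} applies verbatim and yields that the $L$-data of $\pi(\EE)$ is obtained from that of $\pi(\EE^{-})$ by inserting $\Delta_{\rho}[B_j,-A_j]$, which is the conclusion. I would also include a short remark noting that $\EE^{-}$ is indeed in $\Rep^{(P')}$ (so that Lemma \ref{lem max triangle}'s ambient hypothesis $\EE \in \Rep^{(P')}$ is met for $\EE^{-}$ as well, though strictly the lemma only needs it for the $\EE$ there, which is our $\EE$), and that the choice $j = \min\{i \mid A_i - B_i + 1 = b_\rho\}$ is forced: picking a non-minimal such $j$ could break admissibility of the order on $I_\rho$ after $add_j^{-1}$.

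\textbf{Main obstacle.} The step I expect to be genuinely delicate is the nonvanishing $\pi(\EE^{-}) \neq 0$: translating "absolutely maximal" into the combinatorial nonvanishing condition for $add_j^{-1}(\EE)$ requires either a careful bookkeeping argument with Atobe's construction or a clean identification of $add_j^{-1}$ restricted to absolutely maximal $\EE$ with an inverse of one of the raising operators of \cite{HLL22}. Everything else — (P')-preservation, the $b_\rho$-bound via Lemma \ref{lem max b}, and the final invocation of Lemma \ref{lem max triangle} — should be routine.
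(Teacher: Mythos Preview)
Your strategy is exactly the paper's: reduce to Lemma \ref{lem max triangle}, get the third hypothesis from Lemma \ref{lem max b} applied to $\EE^{-}$, and isolate $\pi(\EE^{-})\neq 0$ as the only nontrivial point; you also correctly dualize via $dual(\EE^{-})=sh_j^{-1}(dual(\EE))$ and propose to check the non-vanishing criterion there. What you leave vague is the precise way absolute maximality feeds into that criterion, and this is the whole content. The paper checks \cite[Theorem 3.19]{HLL22} for $sh_j^{-1}(dual(\EE))$ in two pieces: condition (i) holds because $l_j\geq 1$, which follows since $\EE$ absolutely maximal forbids splitting the $j$-th row by $ui^{-1}$ of type $3'$ (cf.\ \cite[Corollary 5.7]{HLL22}); for condition (ii), since $A_j-B_j+1$ is maximal one can row-exchange $j$ to the bottom of $dual(\EE)$, so only adjacent pairs $(j,\beta)$ need checking, and such a pair can fail only if $ui_{\beta,j}$ is applicable on $dual(\EE)$, i.e.\ $dual\circ ui_{\beta,j}\circ dual$ is applicable on $\EE$, contradicting absolute maximality (the non-adjacent case is handled by transitivity via \cite[Proposition 3.12(ii)]{HLL22}). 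Your first alternative---expressing $add_j^{-1}$ as an inverse of a sequence of raising operators---does not work directly ($add_j^{-1}$ is not among them), so commit to the non-vanishing criterion route.
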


\begin{proof}
If $\pi(\EE^{-})\neq 0$, then the first two conditions of Lemma \ref{lem max triangle} hold from assumptions. By applying Lemma \ref{lem max b} to $\EE^{-}$, the third condition also holds. Therefore, it remains to show $\pi(\EE^{-})\neq 0$. We follow the notation in \cite[\S 3.2]{HLL22}. By \cite[Theorem 3.29]{HLL22}, it is equivalent to show that $\pi(dual(\EE^{-}))$ is nonzero. Now we check Conditions (i) and (ii) in \cite[Theorem 3.19]{HLL22} for $dual(\EE^{-})$, which is equal to $ sh_j^{-1}(dual(\EE))$ by Lemma \ref{lem equalities of operators}(1).

The absolute maximality of $\EE$ implies that we can not split the $j$-th row of $\EE$ by $ui$ inverse of type 3' in the sense of \cite[Corollary 5.7]{HLL22}, which is equivalent to $l_j\geq 1$. Then the dual formula (\cite[Definition 3.27]{HLL22}) shows that $ sh_j^{-1}(dual(\EE))$ satisfies Condition (i) of \cite[Theorem 3.19]{HLL22}.

Now we check Condition (ii) of \cite[Theorem 3.19]{HLL22}, i.e., any adjacent pair $(\alpha, \beta ,\gg)$ of $sh_j^{-1}(dual(\EE_{\rho}))$ satisfies Condition (i) of \cite[Proposition 3.12]{HLL22}. We say the adjacent pair $(\alpha, \beta ,\gg)$ is good in this case for brevity. Since 
\[A_j-B_j+1= \max \{A_i-B_i+1\ | \ i \in I_{\rho}\},\]
we may apply row exchanges to lower the $j$-th row of $dual(\EE)$ to the bottom, and hence it suffices to check the case that $\alpha=j$.

If $(j,\beta, \gg)$ is also an adjacent pair of $dual(\EE)$, then this adjacent pair of $sh_j^{-1}(dual(\EE_{\rho}))$ is not good only if $ui_{\beta ,j}$ is applicable on $dual(\EE)$. This implies that $dual\circ ui_{\beta,j} \circ dual$ is applicable on $\EE$ and contradicts to the absolute maximality of $\EE$. 

If $(j,\beta, \gg)$ is not an adjacent pair of $dual(\EE)$, then we may assume $j>' \beta+1 >' \beta$ is adjacent, where $>'$ is the admissible order of $dual(\EE)$ on $I_{\rho}$, and 
\[A_{j}-1= A_{\beta+1}> A_\beta  ,\  -B_{j}> -B_{\beta+1}> -B_{\beta}. \]
The adjacent pair $(j, \beta+1, >')$ of $sh_j^{-1}(dual(\EE_{\rho}))$ is good since $ ui_{\beta+1, j}$ is not applicable on $dual(\EE)$ by the absolute maximality of $\EE$. The adjacent pair $(\beta, \beta+1, >')$ is good since $\pi(dual(\EE))\neq 0$. Then \cite[Proposition 3.12(ii)]{HLL22} implies that $(j,\beta, \gg)$ is also good. This completes the proof of the proposition.
\end{proof}

As a corollary, the inequality in Lemma \ref{lem max b} is indeed an equality if $\EE$ is absolutely maximal. We rephrase this as follows.

\begin{cor}\label{cor L-data equality}
Suppose $\pi \in \Pi^{gp}_A(G_n)$ with $L$-data
\[  \pi= L(\Delta_{\rho_1}[x_1,-y_1],\dots ,\Delta_{\rho_f}[x_f,-y_f]; \pi(\phi,\varepsilon)),\]
and
\[\psi^{max}(\pi)=\bigoplus_{\rho} \bigoplus_{i \in I_{\rho}} \rho \otimes S_{a_i} \otimes S_{b_i}. \]
For each $\rho \in \{\rho_1,\ldots, \rho_f\}$, let
\[b_{\rho}:= - \min \left(\{ x_i-y_i \ | \ 1 \leq i \leq f , \rho_i \cong \rho \} \cup \{0\} \right) +1.\]
We have the equality
\[ b_{\rho}= \max\{ b_i \ | \ i \in I_{\rho} \}. \]
\end{cor}

Remark that the equality in above corollary does not hold for arbitrary $\psi \in \Psi(\pi)$. The $\psi_{\EE}$ in \cite[Example 10.13]{HLL22} gives a counter example. Now we prove Theorem \ref{thm max intro}.

\begin{proof}[Proof of Theorem \ref{thm max intro}]
    By Corollary \ref{cor reduction to gp}, we may assume $\pi$ is of good parity. Write $\pi=\pi(\EE)$ where $\EE$ is absolutely maximal. Our goal is to show
\begin{align}\label{eq proof of conj}
    \phi_{\pi} \geq_C \phi_{\psi_{\EE}}.
\end{align}
We apply induction on $n$, the rank of $G_n$.

Write $\EE= \cup_{\rho} \{([A_i,B_i]_{\rho},l_i,\eta_i)\}_{i \in (I_{\rho},>)}$ and consider the quantities
\[ b_{\rho}(\EE):=\max_{i \in I_{\rho}} \{ A_i-B_i+1 \}. \]
If $b_{\rho}(\EE)\leq 2$ for every $\rho$, then $\EE$, which is absolutely maximal by assumption, automatically satisfies the conditions in \cite[Theorem 9.5]{HLL22}. Therefore, we have $\phi_\pi= \phi_{\psi_{\EE}}$, so \eqref{eq proof of conj} trivially holds. Note that when $n=1$, we must have $b_{\rho}(\EE)\leq 2$, and hence \eqref{eq proof of conj} holds.

Suppose $n>1$ and $b_{\rho}(\EE) >2$ for some $\rho$. We take $j \in I_{\rho}$ and construct $\EE^{-}=add_j^{-1}(\EE)$ as in Proposition \ref{prop max triangle}. Since $\pi(\EE^{-})\neq 0$ is a representation of $G_m$ for some $m<n$, the induction hypothesis and Theorem \ref{thm three orderings} for closure ordering imply that
\[ \phi_{\pi(\EE^{-})} \geq_{C} \phi_{\psi^{max}(\pi(\EE^{-}))} \geq_C \phi_{\psi_{\EE^{-}}}. \]
On the other hand, Proposition \ref{prop max triangle} gives that 
\begin{align*}
    \phi_{\pi(\EE)}&= \left(\rho|\cdot|^{\half{B_j-A_j}} \otimes S_{B_j+A_j+1}\oplus \rho|\cdot|^{-\half{B_j-A_j}} \otimes S_{B_j+A_j+1}\right)\oplus  \phi_{\pi(\EE^{-})},\\
    \phi_{\psi_\EE}&= \left(\rho|\cdot|^{\half{B_j-A_j}} \otimes S_{B_j+A_j+1}\oplus \rho|\cdot|^{-\half{B_j-A_j}} \otimes S_{B_j+A_j+1}\right)\oplus  \phi_{\psi_{\EE^{-}}}.
\end{align*}
Then applying Lemma \ref{lem common part}, we get $\phi_{\pi(\EE)} \geq_C \phi_{\psi_{\EE}}.$ This completes the proof of the theorem.
\end{proof}

\section{Enhanced Shahidi Conjecture}\label{sec Shahidi}

Let $\RG$ be a connected reductive group defined and quasi-split over $F$ and let $G= \RG(F)$. We assume that there is a theory of local Arthur packets for $G$ as conjectured in \cite[Conjecture 6.1]{Art89}. Namely, let $\Psi(G)$ be the collection of continuous homomorphism $\psi: W_F \times \SL_2^D(\BC) \times \SL_2^A(\BC) \to {}^{L}G$ such that 
\begin{enumerate}
    \item [(i)] the restriction of $\psi$ to $W_F \times \SL_2^D(\BC)$ gives a tempered $L$-parameter, and
    \item [(ii)] the restriction of $\psi$ to $\SL_2^{A}(\BC)$ is analytic.
\end{enumerate}
For each $\psi \in \Psi(G)$, there exists a finite multi-set $\Pi_{\psi} \subset \Pi(G)$ with nice properties. We call such $\psi$ a local Arthur parameter of $G$ and $\Pi_{\psi}$ the local Arthur packet of $\psi$. We say $\psi$ is \emph{tempered} if $\psi|_{\SL_2^{A}}(\BC)$ is trivial.

In this section, we show that Conjecture \ref{main conj}, along with certain assumptions (Working Hypotheses \ref{assumption}), implies the following form of the enhanced Shahidi conjecture (Conjecture \ref{Enhanced Shahidi conjecture intro}), see Theorem \ref{proof of enhanced shahidi} below.

\begin{conj}[{\cite[Conjecture 1.5]{LS22}}, Enhanced Shahidi Conjecture]\label{Enhanced Shahidi conjecture}
For any quasi-split reductive group $G$, a local Arthur parameter $\psi \in \Psi(G)$ is tempered if and only if $ \Pi_{\psi}$ has a generic member.
\end{conj}

 Then, we verify these assumptions for symplectic and split odd special orthogonal groups (Theorem \ref{dual packets} and Lemma \ref{dual generic}). Combining with Theorem \ref{thm max intro}, this gives a new proof of Conjecture \ref{Enhanced Shahidi conjecture} in these cases (Theorem \ref{new proof of enhanced Shahidi conjecture}). 

 It is worth to notice that without assuming generalized Ramanujan Conjecture, $\Psi(G)$ is only a subset of $\Psi^+(G)$, the set of all local Arthur parameters (see \cite[\S 1.3]{Art13} and \eqref{lap} for $G_n$). Thus, Conjecture \ref{Enhanced Shahidi conjecture}, which is stated for $\Psi(G)$, does not directly imply Conjecture \ref{Enhanced Shahidi conjecture intro}, which is stated for $\Psi^{+}(G)$. However, for symplectic and split odd special orthogonal groups, it is known that the two conjectures are equivalent (see the proof of \cite[Theorem 8.6]{HLL22}). We expect that the equivalence for general quasi-split groups can be verified case by case.

\subsection{On the enhanced Shahidi conjecture} \label{sec Shahidi generality}

The Shahidi conjecture (\cite[Conjecture 9.4]{Sha90}) states that if $\phi$ is tempered, then $\Pi_\phi$ contains a generic representation. This generic representation is expected to be unique once the Whittaker data being fixed. 
Notice that if $\phi$ is tempered, then $\phi$ is also a local Arthur parameter and the local $L$-packet is expected to coincide with the corresponding local Arthur packet. The Shahidi conjecture has been proved for many cases (for examples, see \cite{Kon02, JS03, JS04, Liu11, JS12, MW12, Wal12, Art13, JL14, KMSW14, Mok15, Beu16, Var17, Ato17, JL22}). The new content of the enhanced Shahidi conjecture is that if $\psi \in \Psi(G)$ and the local Arthur packet $\Pi_\psi$ contains a generic representation, then $\psi$ is a tempered local Arthur parameter.

 We first recall certain geometric structure on the Vogan varieties. Fix an infinitesimal character $\lambda: W_F\to {}^L \RG$ and let $V_\lambda$ be the Vogan variety. Under the closure ordering on the set of orbits in $V_{\lambda}$, the zero orbit is clearly the unique minimal orbit. On the other hand, it turns out that there is also a unique maximal orbit under this ordering.

\begin{lemma}[{\cite[Proposition 5.6]{CFMMX22}}]\label{max min V_lambda}
The Vogan variety $V_\lambda$ contains a unique open dense orbit.
\end{lemma}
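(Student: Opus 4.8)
The plan is to derive this from two structural facts: that $V_\lambda$ is irreducible, and that (by the finiteness statement already recorded in the excerpt, see \cite[Proposition 5.6]{CFMMX22}) $V_\lambda$ is the union of finitely many $H_\lambda$-orbits. Irreducibility is immediate: the defining conditions $\Ad(\lambda(w))x=|w|x$ for $w\in W_F$ are linear in $x$, so $V_\lambda$ is a linear subspace of $\widehat{\mathfrak{g}}$, hence isomorphic to an affine space and in particular irreducible (and nonempty, since $0\in V_\lambda$).

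Granting this, I would write $V_\lambda=\bigsqcup_{i=1}^{N}C_i$ for the finitely many $H_\lambda$-orbits $C_i$. Each $C_i$ is the image of the orbit map $H_\lambda\to V_\lambda$, $g\mapsto g\cdot x_i$, hence constructible, and by the standard fact that orbits of an algebraic group action on a variety are locally closed, $C_i$ is open and dense in its Zariski closure $\overline{C_i}$, with $\overline{C_i}\setminus C_i$ a union of orbits of strictly smaller dimension. Since $V_\lambda=\bigcup_{i=1}^{N}\overline{C_i}$ and $V_\lambda$ is irreducible, some $\overline{C_{i_0}}$ must equal $V_\lambda$; then $C_{i_0}$ is dense, and, being open in $\overline{C_{i_0}}=V_\lambda$, it is the desired open dense orbit. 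For uniqueness I would observe that if $C_{i_1}$ were a second open dense orbit, then $C_{i_0}\cap C_{i_1}$ would be a nonempty intersection of two dense open subsets of the irreducible space $V_\lambda$, contradicting the fact that distinct $H_\lambda$-orbits are disjoint; hence $C_{i_0}=C_{i_1}$.

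The only genuine input here is the finiteness of the orbit set, which is precisely the content of \cite[Proposition 5.6]{CFMMX22} (provable, for instance, via the relation between $H_\lambda$-orbits on $V_\lambda$ and Richardson orbits in an associated Levi, or via the parabolic stratification of $V_\lambda$); everything else is the elementary observation that an irreducible variety equipped with an algebraic group action having only finitely many orbits has a unique open dense orbit. Accordingly, I do not anticipate any real obstacle beyond invoking that finiteness statement — the substantive work is entirely offloaded to the cited proposition, and the remaining argument is a short topological one.
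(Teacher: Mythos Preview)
Your argument is correct and complete: $V_\lambda$ is a linear subspace of $\widehat{\mathfrak{g}}$, hence irreducible, and combining this with the finiteness of $H_\lambda$-orbits yields the unique open dense orbit by the standard locally-closed-orbit argument you give. The paper itself offers no proof at all---the lemma is simply stated with the citation to \cite[Proposition 5.6]{CFMMX22} and nothing further---so your proposal is not a different route so much as an actual argument where the paper has none.
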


Following \cite{CFMZ22}, a local $L$-parameter $\phi$ is called open if $C_\phi$ is open in $V_{\lambda_\phi}$. The following result describes the basic properties of open parameters.

\begin{prop}[{\cite{CFMZ22}}]\label{CFMZ} Let $\phi$ be a local $L$-parameter of $G$ and let $\lambda=\lambda_\phi$ be the infinitesimal character associated with $\phi$.
\begin{enumerate}
    \item The parameter $\phi$ is tempered if and only if $\phi$ is open and $\phi= \phi_{\psi}$  for some $\psi \in \Psi(G)$.
    \item The parameter $\phi$ is open if and only if $L(s,\phi, \Ad)$ is regular at $s=1$.
\end{enumerate}
\end{prop}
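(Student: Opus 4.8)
\textbf{Proof proposal for Proposition \ref{CFMZ}.}

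The plan is to treat the two parts separately, as they have rather different flavors. For Part (1), one direction is immediate: by Arthur's construction a tempered local $L$-parameter $\phi$ is of Arthur type (take $\psi = \phi \otimes S_1$), and by the remarks preceding Lemma \ref{max min V_lambda} together with the discussion after Conjecture \ref{conj max}, the tempered parameter $\phi^0$ in $\Phi(G)_\lambda$ corresponds to the unique open orbit; since tempered parameters in a fixed $\Phi(G)_\lambda$ are unique (each $\Phi(G)_\lambda$ contains exactly one tempered parameter, as $V_\lambda$ has a unique open dense orbit by Lemma \ref{max min V_lambda} and the bijection $\phi \mapsto C_\phi$ is order preserving), $\phi$ must be $\phi^0$, hence open. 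For the converse, suppose $\phi$ is both of Arthur type, say $\phi = \phi_\psi$, and open. I would argue that $X_\phi = X_{\phi_\psi}$ lying in the open orbit forces the Arthur $\mathrm{SL}_2$ to be trivial: writing $X_{\phi_\psi}$ in terms of the Deligne and Arthur nilpotents, one checks that the Jacobson–Morozov $\mathfrak{sl}_2$-triple attached to an open (i.e.\ regular-in-$V_\lambda$) nilpotent cannot absorb a nontrivial Arthur $\mathrm{SL}_2$ without the associated infinitesimal parameter failing to be $\lambda_\phi$ — more precisely, $\phi_\psi$ open means $C_{\phi_\psi}$ is dense, and comparison of the partitions $\underline{p}^A$ and $\underline{p}^D$ (the former must be $[1^N]$, forcing $\psi$ tempered) does the job. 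This is essentially the content of \cite{CFMZ22}, so I would cite it for the precise argument and only sketch the partition comparison.

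For Part (2), the statement relates openness of $C_\phi$ in $V_\lambda$ to the regularity at $s=1$ of the adjoint $L$-function $L(s,\phi,\Ad)$. The approach is to recall that $L(s,\phi,\Ad)$ is built from the eigenvalues of $\phi(\mathrm{Fr})$ acting on $\widehat{\mathfrak g}$, and that its order of pole at $s=1$ counts (with multiplicity) the dimension of the $|w|^{-1}$-eigenspace of $\Ad(\lambda(\mathrm{Fr}))$ restricted to the centralizer of $X_\phi$ — equivalently, it measures $\dim V_\lambda - \dim C_\phi$ via the standard fact that the open orbit in $V_\lambda$ is the one whose tangent space fills $V_\lambda$, and the codimension of $C_\phi$ is detected by $H^1$ of the relevant complex, which is exactly what contributes a pole. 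So the plan is: express $\mathrm{ord}_{s=1} L(s,\phi,\Ad)$ in terms of $\dim H_\lambda \cdot X_\phi$ versus $\dim V_\lambda$, invoke that $C_\phi$ is open iff these agree, and conclude. Again the cleanest route is to cite \cite{CFMZ22} for the cohomological identity and present the orbit-dimension bookkeeping.

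The main obstacle I anticipate is Part (1)'s converse: making rigorous the claim that ``open plus Arthur type implies tempered'' without simply quoting \cite{CFMZ22}. The subtlety is that $X_{\phi_\psi}$ encodes both $\mathrm{SL}_2$'s via the formula $\phi_\psi(w,x) = \psi(w,x,\mathrm{diag}(|w|^{1/2},|w|^{-1/2}))$, and one must show the nilpotent $d(\phi_\psi|_{\mathrm{SL}_2})\bigl(\begin{smallmatrix}0&1\\0&0\end{smallmatrix}\bigr)$ is regular in $V_\lambda$ only when the Arthur factor is trivial; the honest proof passes through Corollary \ref{cor comparison closure partition} (3) — openness gives $\underline{p}^D(\psi)$ maximal, which combined with the constraint that $\underline p ^A(\psi)$ and $\underline p^D(\psi)$ have the same total size pins down $\underline p^A(\psi)=[1^N]$ — but verifying that ``maximal $\underline p^D$'' genuinely corresponds to the open orbit (and not merely to a maximal orbit under dominance, which by Example \ref{exmp order comparison}(2)–(3) need not coincide with the closure order) requires the rank-triangle analysis of \S\ref{sec closure ordering for unramified} rather than just the partition. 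Since the paper's purpose here is to record \cite{CFMZ22}'s results as input, I would keep the proof short and attribute the delicate step appropriately. \qed
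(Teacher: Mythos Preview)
The paper does not prove Proposition~\ref{CFMZ} at all: it is stated as a result of \cite{CFMZ22} and used as a black box. So there is no ``paper's own proof'' to compare against --- the paper's approach is simply to cite the external reference, which you yourself conclude is the right thing to do.

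That said, your sketched arguments contain genuine gaps that are worth flagging. For Part~(1), your forward direction invokes ``the discussion after Conjecture~\ref{conj max}'' to assert that the tempered parameter corresponds to the open orbit, but that passage in the paper explicitly cites Proposition~\ref{CFMZ} itself (and Lemma~\ref{phi_0 phi^0 Arthur type}, whose proof in turn uses Proposition~\ref{CFMZ}(1)), so the reasoning is circular. For the converse, your partition argument is not correct as stated: ``openness gives $\underline{p}^D(\psi)$ maximal'' (via Corollary~\ref{cor comparison closure partition}) does not by itself force $\underline{p}^A(\psi)=[1^N]$ --- the two partitions having the same total size is automatic and imposes no constraint linking them. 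The actual mechanism is the injectivity of $\psi\mapsto\phi_\psi$ together with an independent proof that some tempered parameter (e.g.\ the diagonal $\psi^\Delta$) lands on the open orbit; but establishing the latter without circularity requires the orbit-dimension or $L$-function computation you allude to in Part~(2), not a partition comparison. Your instinct to defer the delicate step to \cite{CFMZ22} is therefore exactly what the paper does.
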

 
 In the following, we don't distinguish a local $L$-parameter in $\Phi(G)_{\lambda}$ and the orbit it represents in $V_{\lambda}$. We denote $\phi_0$ (resp. $\phi^{0}$) the unique closed (resp. open) orbit in $V_{\lambda}$. We shall focus on the case that $V_{\lambda}$ contains a local $L$-parameter of Arthur type. In this case, $\phi^{0}$ and $\phi_0$ are related as follows.

 \begin{lemma}\label{phi_0 phi^0 Arthur type}
If $V_\lambda$ contains a local $L$-parameter $\phi_{\psi}$ for some $\psi \in \Psi(G)$, then $\phi^0 =\phi_{\psi^0}$ and $\phi_0=\phi_{\psi_0}$ for some local Arthur parameters $\psi^0, \psi_0 \in \Psi(G)$. Moreover, we have $\widehat{\psi^0}=\psi_0.$
\end{lemma}

\begin{proof} 
Take $\psi$ a local Arthur parameter such that $\lambda= \lambda_{\phi_{\psi}}$. Then we consider the local $L$-parameter $\psi^{\Delta}$ defined by
\[ \psi^{\Delta}(w,x):= \psi(w,x,x),\]
which is tempered since $\psi|_{W_F}$ has bounded image. It follows from direct computation that $\lambda_{\phi^0}= \lambda_{\phi_{\psi}}=\lambda$, and hence $\psi^{\Delta}=\phi^{0}$ by Proposition \ref{CFMZ}(1). Therefore, $\phi^0= \psi_{\psi^{0}}$, where
\[ \psi^{0}(w,x,y):= \psi^{\Delta}(w,x)= \psi(w,x,x). \]

Next, we show that $\phi_{\widehat{\psi}^0}$ gives the minimal element in $V_{\lambda}$. Indeed, since $\widehat{\psi}^{0}$ is trivial on $\SL_2^D(\BC)$, $\phi_{\widehat{\psi}^0}$ is trivial on $\SL_2(\BC)$. Therefore, $C_{\phi_{\widehat{\psi}^0}}$ is the zero orbit, which is minimal by definition. This completes the proof of the lemma.
\end{proof}

 Conjecture \ref{main conj} has a direct corollary as follows. 

\begin{cor}\label{phi_0 singleton}\label{a dual version of Shahidi}
Assume that Conjecture \ref{main conj} holds for $G$ and that $\pi\in\Pi_{\phi_0}.$ If $\phi_0=\phi_{\psi_0}$ for some $\psi_0 \in \Psi(G)$, then $\Psi(\pi)=\{\psi_0\}.$
\end{cor}

\begin{proof}
By Conjecture \ref{main conj}, if $\pi\in\Pi_\psi$ for some local Arthur parameter $\psi,$ then $\phi_0\geq_C\phi_\psi.$ But $\phi_0$ corresponds to the zero orbit in $V_\lambda$ and hence this can only occur if $\psi=\psi_0.$
\end{proof}

\begin{remark}
For the ABV-packets defined in \cite{CFMMX22}, the analogue of Corollary \ref{phi_0 singleton} is known for general $p$-adic connected reductive groups since the analogue of  Conjecture \ref{main conj} has been proved there for these packets using geometric tools. See \cite{CFZ21,CFZ22} for the analogous results for unipotent representations of the exceptional group $G_2$.
\end{remark}

 Suppose $\pi$ is an unramified representation of $G$. It is known that the local $L$-parameter $\phi_{\pi}$ for $\pi$ is closed, i.e., $\phi_{\pi}$ is the unique closed orbit in $V_{\lambda_{\phi_{\pi}}}$. If we further assume $\pi$ is of Arthur type, then Lemma \ref{phi_0 phi^0 Arthur type} implies that $\phi_{\pi}=\phi_0= \phi_{\psi_{0}}$, and $\widehat{\psi_0} =\psi^{0}$ is tempered. Moreover, $\Psi(\pi)=\{\psi_{0}\}$ is a singleton by Corollary \ref{a dual version of Shahidi} assuming Conjecture \ref{main conj} holds for $G$. We record this special case as follows.
\begin{cor}\label{cor enhanced Shahidi for unramified}
Assuming Conjecture \ref{main conj} holds for $G$, given any local Arthur parameter $\psi \in \Psi(G)$, the local Arthur packet $\Pi_{\psi}$ contains an unramified representation if and only if $\widehat{\psi}$ is tempered and $\psi|_{I_F}$ is trivial, where $I_F$ is the inertia group.
\end{cor}

Remark that the above corollary is proved for $\Sp_{2n}(F)$ and split $\SO_{2n+1}(F)$ in \cite[\S 12]{HLL22} without assuming Conjecture \ref{main conj}. Now we state the Working Hypotheses.

\begin{assumption}\label{assumption} 
Let $\RG$ be a quasi-split connected reductive group over $F$. Suppose that there is a theory of local Arthur packets for $G$ as conjectured in \cite[Conjecture 6.1]{Art89}. We assume that the following hold. 
\begin{enumerate}
    \item Given a local Arthur parameter $\psi$ of $G$, we have $$\Pi_{\widehat\psi}=\{\widehat\pi\ |~\pi\in \Pi_\psi\}.$$
    \item Let $\pi$ be an irreducible generic representation of $G$ of Arthur type, then $\widehat\pi\in \Pi_{\phi_0}$, where $\phi_0$ is the local $L$-parameter which corresponds to the zero orbit in $V_\lambda$ with $\lambda=\lambda_{\phi_\pi}$.
\end{enumerate}
\end{assumption}

In the following, we show that the new content  of the enhanced Shahidi conjecture \ref{Enhanced Shahidi conjecture} is a dual version of Corollary \ref{a dual version of Shahidi} under  Working Hypotheses \ref{assumption}, and thus is a consequence of Conjecture \ref{main conj}.

\begin{thm}\label{proof of enhanced shahidi}
Let $\RG$ be a quasi-split connected reductive group over $F$. Suppose that there is a theory of local Arthur packets for $G$ as conjectured in \cite[Conjecture 6.1]{Art89}. 
 Assume Conjecture \ref{main conj} and the Working Hypotheses  \ref{assumption} hold for $G$. If $\psi \in \Psi(G)$ and $\Pi_\psi$  contains a generic representation, then $\psi$ is tempered.
\end{thm}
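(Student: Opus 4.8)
The plan is to reduce the statement to Corollary \ref{a dual version of Shahidi} by passing to the Aubert--Zelevinsky dual. Suppose $\Pi_\psi$ contains a generic representation $\pi$. By Working Hypotheses \ref{assumption} (1), the dual $\widehat\pi$ lies in $\Pi_{\widehat\psi}$, so $\widehat\pi$ is of Arthur type, and therefore $\phi_{\widehat\pi}$ is of Arthur type. Since $\pi$ is generic of Arthur type, Working Hypotheses \ref{assumption} (2) places $\widehat\pi$ in $\Pi_{\phi_0}$, where $\phi_0$ is the local $L$-parameter corresponding to the zero orbit in $V_\lambda$ with $\lambda = \lambda_{\phi_\pi}$. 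Note that by \cite[Proposition 4.1]{Moe09b} (available since we are assuming a local Arthur packets theory with the infinitesimal-parameter property, cf.\ the discussion preceding Conjecture \ref{conj max}), all members of $\Pi_\psi$ — in particular $\pi$ — share the infinitesimal parameter $\lambda_{\phi_\psi}$, and similarly all members of $\Pi_{\widehat\psi}$ share $\lambda_{\phi_{\widehat\psi}}$; one checks $\lambda_{\phi_\pi}=\lambda_{\phi_\psi}$, so $\lambda$ above is exactly $\lambda_{\phi_\psi}$ up to the relation between $\psi$ and $\widehat\psi$ on infinitesimal parameters.

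Next I would apply Corollary \ref{a dual version of Shahidi} (which holds under Conjecture \ref{main conj}) to $\widehat\pi \in \Pi_{\phi_0}$: if $\phi_0$ is of Arthur type, say $\phi_0 = \phi_{\psi_0}$, then $\Psi(\widehat\pi) = \{\psi_0\}$. Here $\phi_0$ is indeed of Arthur type: $V_\lambda$ contains a parameter of Arthur type (namely $\phi_{\widehat\psi}$, coming from $\widehat\psi$), so by Lemma \ref{phi_0 phi^0 Arthur type} both $\phi^0$ and $\phi_0$ are of Arthur type, with $\widehat{\psi^0} = \psi_0$ for the corresponding local Arthur parameters. Since $\widehat\pi \in \Pi_{\widehat\psi}$ as well, the singleton conclusion forces $\widehat\psi = \psi_0$. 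Dualizing, $\psi = \widehat{\psi_0} = \widehat{\widehat{\psi^0}} = \psi^0$, and $\psi^0$ is the local Arthur parameter attached to the open orbit $\phi^0$. By Lemma \ref{phi_0 phi^0 Arthur type} (or directly from the construction $\psi^0(w,x,y) := \psi(w,x,x)$ in its proof), $\psi^0$ restricted to the Arthur $\SL_2$ is trivial, i.e.\ $\psi^0$ is tempered; hence $\psi = \psi^0$ is tempered, as desired.

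I should double-check the bookkeeping of infinitesimal parameters: the relation is $\lambda_{\phi_{\widehat\psi}}$ need not equal $\lambda_{\phi_\psi}$, so I must be careful to apply Corollary \ref{a dual version of Shahidi} inside the correct Vogan variety $V_{\lambda_{\phi_{\widehat\pi}}}$, and to confirm that the zero orbit there is represented by $\phi_{\widehat{\psi^0}}$ where $\psi^0$ is built from $\psi$. This is precisely the content of the second paragraph of the proof of Lemma \ref{phi_0 phi^0 Arthur type}, so I would cite that computation rather than redo it. The one genuinely delicate point — and the main obstacle — is verifying that the hypothesis of Corollary \ref{a dual version of Shahidi} is met, i.e.\ that $\phi_0$ (in the Vogan variety attached to $\widehat\pi$) is of Arthur type; this requires knowing $V_\lambda$ contains some parameter of Arthur type, which is supplied by $\phi_{\widehat\psi}$ itself, so the argument closes. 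Everything else is a formal chase through the definitions and the already-established lemmas.

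\begin{proof}
Suppose the local Arthur packet $\Pi_\psi$ contains a generic representation $\pi$. By Working Hypotheses \ref{assumption} (1), $\widehat\pi \in \Pi_{\widehat\psi}$; in particular $\widehat\pi$ is of Arthur type, so $\phi_{\widehat\pi}$ is of Arthur type. Let $\lambda := \lambda_{\phi_{\widehat\pi}}$ and let $V_\lambda$ be the corresponding Vogan variety, with $\phi_0$ the local $L$-parameter corresponding to the zero orbit of $V_\lambda$. By Working Hypotheses \ref{assumption} (2), since $\pi$ is a generic representation of Arthur type, we have $\widehat\pi \in \Pi_{\phi_0}$.

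Since $\phi_{\widehat\pi}$ is of Arthur type and $\lambda_{\phi_{\widehat\pi}} = \lambda$, the Vogan variety $V_\lambda$ contains a local $L$-parameter of Arthur type. By Lemma \ref{phi_0 phi^0 Arthur type}, both $\phi^0$ and $\phi_0$ are of Arthur type; write $\phi^0 = \phi_{\psi^0}$ and $\phi_0 = \phi_{\psi_0}$, so that $\widehat{\psi^0} = \psi_0$. Now $\widehat\pi \in \Pi_{\phi_0}$ and $\phi_0$ is of Arthur type, so Corollary \ref{a dual version of Shahidi} (which holds under Conjecture \ref{main conj}) gives $\Psi(\widehat\pi) = \{\psi_0\}$. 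On the other hand, $\widehat\pi \in \Pi_{\widehat\psi}$ implies $\widehat\psi \in \Psi(\widehat\pi)$, whence $\widehat\psi = \psi_0 = \widehat{\psi^0}$, and therefore $\psi = \psi^0$.

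Finally, by the construction in the proof of Lemma \ref{phi_0 phi^0 Arthur type}, the local Arthur parameter $\psi^0$ attached to the open orbit $\phi^0$ is given by $\psi^0(w,x,y) = \psi(w,x,x)$, which is trivial on the Arthur copy $\SL_2^A(\BC)$; hence $\psi^0$ is tempered. Since $\psi = \psi^0$, the local Arthur parameter $\psi$ is tempered, as claimed.
\end{proof}
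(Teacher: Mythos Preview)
Your argument matches the paper's: dualize via Working Hypotheses \ref{assumption} (1), place $\widehat\pi$ in $\Pi_{\phi_0}$ via Working Hypotheses \ref{assumption} (2), apply Corollary \ref{a dual version of Shahidi} to get $\Psi(\widehat\pi) = \{\psi_0\}$, and conclude $\psi = \psi^0$ is tempered. One slip to fix: the inference ``$\widehat\pi$ is of Arthur type, so $\phi_{\widehat\pi}$ is of Arthur type'' is not valid in general (a representation of Arthur type need not have an $L$-parameter of Arthur type; cf.\ Proposition \ref{properties of max and min} (1)). The correct witness that $V_\lambda$ contains an Arthur-type parameter---needed to invoke Lemma \ref{phi_0 phi^0 Arthur type}---is $\phi_{\widehat\psi}$, exactly as you identified in your preamble but then did not use in the formal proof. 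The paper sidesteps both this and the $\lambda$-bookkeeping you flagged by taking $\lambda = \lambda_{\phi_\psi}$ from the outset (so $\phi_\psi$ itself is the witness) and citing Proposition \ref{CFMZ} (1) rather than the construction inside Lemma \ref{phi_0 phi^0 Arthur type} for the temperedness of $\psi^0$.
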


\begin{proof}
By Working Hypotheses \ref{assumption}(1),  $\pi\in \Pi_\psi$ if and only if $\widehat\pi\in \Pi_{\widehat \psi}$, which amounts to a bijection $$\Psi(\pi)\to \Psi(\widehat \pi),$$
$$\psi\mapsto \widehat\psi.$$
Suppose that $\pi$ is generic and $\pi\in \Pi_\psi$ for a local Arthur packet  $\psi$ of $G$. Let $\lambda$ be the infinitesimal character associated with $\phi_\psi$. Let $\psi_0$ (resp. $\psi^0$) be the local Arthur parameter of $\phi_0$ (resp. $\phi^0$) as in Lemma \ref{phi_0 phi^0 Arthur type}.  By Working Hypotheses \ref{assumption}(2), we have $\widehat \pi\in \Pi_{\phi_0}$. Conjecture \ref{main conj} implies that $\Psi(\widehat\pi)=\{\psi_0\}$, see Corollary \ref{a dual version of Shahidi}, and hence $\Psi(\pi)=\{\widehat{\psi_0}\}=\{\psi^0\}$. Thus $\psi=\psi^0$, which is tempered by Proposition \ref{CFMZ}. This completes the proof of the theorem.
\end{proof}

\subsection{A new proof of the enhanced Shahidi conjecture for \texorpdfstring{$\Sp_{2n}(F)$}{} and split \texorpdfstring{$\SO_{2n+1}(F)$}{}}\label{sec Shahidi for SO and Sp}

In this subsection, we discuss certain known cases regarding Working Hypotheses \ref{assumption} and give a new proof of Conjecture \ref{Enhanced Shahidi conjecture} for \texorpdfstring{$\Sp_{2n}(F)$}{} and split \texorpdfstring{$\SO_{2n+1}(F)$}{}. 

Working Hypotheses \ref{assumption}(1) essentially says that the Aubert-Zelevinsky involution should be compatible with endoscopic transfer, and it is known to be true for many groups. For example, it is true for quasi-split classical groups as we recall in Theorem \ref{dual packets} in the next section.

Regarding Working Hypotheses \ref{assumption}(2), inspired by \cite{CFMZ22}, we expect that it is true without the assumption that $\pi$ is of Arthur type. 
\begin{conj}{\label{dual generic conj}}
Let  $\pi$ be a generic representation of $G$. Then $\widehat\pi\in \Pi_{\phi_0}$, where $\phi_0$ is the local $L$-parameter corresponding to the zero orbit in $V_\lambda$ with $\lambda=\lambda_{\phi_\pi}$.
\end{conj}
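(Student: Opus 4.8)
The plan is to prove Conjecture~\ref{dual generic conj} for $\RG_n=\Sp_{2n}$ and split $\SO_{2n+1}$ (in particular verifying Working Hypotheses~\ref{assumption}(2), which is what Theorem~\ref{proof of enhanced shahidi} needs) by reducing to good parity and then showing that for a generic $\pi$ the orbit $C_{\phi_\pi}$ is forced to be the open orbit $\phi^0$ of the Vogan variety $V_{\lambda}$ with $\lambda=\lambda_{\phi_\pi}$; the statement then follows by transporting $\pi$ through the Aubert involution using the duality of Theorem~\ref{dual packets}.

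\textbf{Reduction to good parity.} Writing $\pi=\tau_{\psi_1}\rtimes\pi_0$ with $\pi_0$ of good parity as in Theorem~\ref{thm reduction to gp}, one has $\widehat\pi=\tau_{\psi_1}\rtimes\widehat{\pi_0}$, and $\lambda_{\phi_\pi}$, the Vogan variety, and the parameter $\phi_0$ all split off the contribution of $\psi_1$; since $\pi$ generic forces $\pi_0$ generic, it suffices to treat $\pi$ of good parity.

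\textbf{Openness of $\phi_\pi$ and temperedness.} Because $\pi$ is generic, its standard module is irreducible (the standard module conjecture, known for these groups), and combining this with the relation between irreducibility of standard modules and regularity of adjoint $L$-functions (Langlands--Shahidi) together with Proposition~\ref{CFMZ}(2) forces $C_{\phi_\pi}$ to be the open orbit, so $\phi_\pi=\phi^0$. Since $\pi\in\Pi_\psi$ for some local Arthur parameter $\psi$, M\oe glin's theorem that all members of $\Pi_\psi$ share an infinitesimal parameter gives $\lambda=\lambda_{\phi_\psi}$, so $V_\lambda$ contains a parameter of Arthur type; Lemma~\ref{phi_0 phi^0 Arthur type} then yields that $\phi^0=\phi_{\psi^0}$ and $\phi_0=\phi_{\psi_0}$ are of Arthur type with $\psi_0=\widehat{\psi^0}$, and, $\phi_\pi=\phi^0$ being simultaneously open and of Arthur type, Proposition~\ref{CFMZ}(1) shows $\phi_\pi$ is tempered; hence $\psi^0=\phi^0\otimes S_1$ and $\pi\in\Pi_{\phi^0}=\Pi_{\psi^0}$. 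Applying Theorem~\ref{dual packets} to $\psi^0$ gives $\widehat\pi\in\Pi_{\widehat{\psi^0}}=\Pi_{\psi_0}$.

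\textbf{From the Arthur packet to the $L$-packet.} It remains to refine $\widehat\pi\in\Pi_{\psi_0}$ to $\widehat\pi\in\Pi_{\phi_0}$, where $\Pi_{\phi_0}$ is in general a proper subpacket of the Arthur packet $\Pi_{\psi_0}$. For this I would use that, relative to the fixed Whittaker datum, the generic member $\pi$ of the tempered $L$-packet $\Pi_{\phi^0}$ corresponds to the trivial character of $\mathcal{S}_{\phi^0}$ (Shahidi's conjecture, known here), feed this into Xu's description of how the Aubert involution acts on Arthur's parametrization $\pi(\psi,\varepsilon)$ and M\oe glin's criterion for membership of $\pi(\psi_0,\varepsilon)$ in $\Pi_{\phi_{\psi_0}}$ inside $\Pi_{\psi_0}$, and check that the trivial character is sent to a character cutting out $\Pi_{\phi_0}$; alternatively, compute the Langlands data of $\widehat\pi$ directly via Atobe's extended multi-segment and derivative formulas and observe that it is exactly the Langlands datum attached to the Langlands decomposition of $\phi_0$ with generic tempered base point. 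I expect this last step to be the main obstacle: the reductions and the openness/temperedness input are essentially bookkeeping plus citations, whereas placing $\widehat\pi$ in the smaller $L$-packet rather than merely the Arthur packet requires the fine endoscopic combinatorics of the duality, and for the full form of Conjecture~\ref{dual generic conj} (without assuming $\pi$ of Arthur type) one loses $\psi_0$ as an intermediary and must argue with $\Pi_{\phi_0}$ directly.
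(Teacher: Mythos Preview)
Your proposal is correct and follows essentially the same route as the paper's proof of Lemma~\ref{dual generic} (the Arthur-type case of Conjecture~\ref{dual generic conj}): openness of $\phi_\pi$ via the Gross--Prasad--Rallis/adjoint $L$-function criterion (Proposition~\ref{CFMZ}(2)), temperedness from open plus Arthur type (Proposition~\ref{CFMZ}(1)), passage to $\Pi_{\psi_0}$ by Theorem~\ref{dual packets}, and then the refinement to $\Pi_{\phi_0}$. For the last step the paper takes exactly your option (b): since $\pi$ is tempered and generic the extended multi-segment is $\EE=\cup_\rho\{([A_i,A_i],0,1)\}$, one computes $dual(\EE)=\cup_\rho\{([A_i,-A_i],\lceil A_i\rceil,1)\}$ by hand, and this visibly satisfies the $L$-packet criterion \cite[Theorem 10.4]{HLL22}, placing $\widehat{\pi(\EE)}$ in $\Pi_{\phi_{\widehat{\psi_\EE}}}$. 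Two minor remarks: the paper performs the good-parity reduction \emph{after} identifying $\pi\in\Pi_{\psi^0}$ rather than before, and your formula $\widehat\pi=\tau_{\psi_1}\rtimes\widehat{\pi_0}$ should read $\tau_{\widehat{\psi_1}}\rtimes\widehat{\pi_0}$.
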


Note that the above conjecture implies the following.

\begin{conj}[\cite{CFMZ22}]\label{generic conj}
A local $L$-parameter $\phi$ of $G$ is generic if and only if $\phi$ is open.
\end{conj}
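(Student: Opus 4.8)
The plan is to derive Conjecture \ref{generic conj} from Conjecture \ref{dual generic conj}, using in addition the tools already at hand for $\Sp_{2n}$ and split $\SO_{2n+1}$ (for general $G$ these become part of the Working Hypotheses). Fix $\phi\in\Phi(G)$, put $\lambda=\lambda_\phi$, and let $\phi_0$ (resp.\ $\phi^0$) be the local $L$-parameter attached to the zero orbit (resp.\ the unique open dense orbit of Lemma \ref{max min V_lambda}) of the Vogan variety $V_\lambda$. Two facts will be used throughout: Aubert duality $\pi\mapsto\widehat\pi$ preserves the extended cuspidal support, hence the infinitesimal parameter, so it restricts to an involution of $\bigsqcup_{\phi'\in\Phi(G)_\lambda}\Pi_{\phi'}$; and Proposition \ref{CFMZ}(2), that a parameter $\phi'$ is open if and only if $L(s,\phi',\Ad)$ is holomorphic at $s=1$.

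The core observation holds whenever $V_\lambda$ contains some parameter of Arthur type. By Lemma \ref{phi_0 phi^0 Arthur type}, $\phi^0$ and $\phi_0$ are then of Arthur type, say $\phi^0=\phi_{\psi^0}$ and $\phi_0=\phi_{\psi_0}$, with $\psi^0$ tempered and $\psi_0=\widehat{\psi^0}$. Hence $\Pi_{\phi^0}=\Pi_{\psi^0}$ is a tempered $L$-packet, and using the compatibility of Aubert duality with Arthur packets (Working Hypotheses \ref{assumption}(1), i.e.\ Theorem \ref{dual packets} for these groups) we get $\Pi_{\psi_0}=\widehat{\Pi_{\psi^0}}=\widehat{\Pi_{\phi^0}}$. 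Since $\Pi_{\phi_0}\subseteq\Pi_{\psi_0}$, this yields the key inclusion $\widehat{\Pi_{\phi_0}}\subseteq\Pi_{\phi^0}$.

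Granting this, both implications of Conjecture \ref{generic conj} follow at once. If $\phi$ is generic, pick a generic $\pi\in\Pi_\phi$; Conjecture \ref{dual generic conj} gives $\widehat\pi\in\Pi_{\phi_0}$, so $\pi=\widehat{\widehat\pi}\in\widehat{\Pi_{\phi_0}}\subseteq\Pi_{\phi^0}$, forcing $\phi=\phi^0$, which is open. Conversely, if $\phi$ is open then $\phi=\phi^0$, and since $\Pi_{\phi^0}=\Pi_{\psi^0}$ is a tempered $L$-packet it contains a generic member by Shahidi's conjecture (known for $\Sp_{2n}$ and split $\SO_{2n+1}$ by Arthur); thus $\phi$ is generic.

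The main obstacle is the case where $V_\lambda$ contains no parameter of Arthur type, so that neither $\phi_0$ nor $\phi^0$ is of Arthur type and the reduction via Lemma \ref{phi_0 phi^0 Arthur type} is no longer available; such $\lambda$ occur already for $\SO_5$, and $V_\lambda$ need not then be a single orbit, so this case cannot be dismissed as vacuous. Here I would argue directly with the geometry of $V_\lambda$ and the adjoint $L$-function, in the spirit of \cite{CFMMX22} and \cite{CFMZ22}, or peel off the non-good-parity and ``isolated'' constituents of $\lambda$ via Theorem \ref{thm reduction to gp} and reduce to the Arthur-type situation; making either route work uniformly across all infinitesimal parameters is where the real effort lies.
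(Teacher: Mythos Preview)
Your route is far more elaborate than what the paper does, and it is both incomplete and, in the paper's logical architecture, backwards.

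The paper does not attempt to \emph{prove} Conjecture \ref{generic conj} from Conjecture \ref{dual generic conj} at all. What it actually records is the one-line observation that Conjecture \ref{generic conj} is equivalent to the Gross--Prasad--Rallis Conjecture \ref{GP}: Proposition \ref{CFMZ}(2) says $\phi$ is open iff $L(s,\phi,\Ad)$ is regular at $s=1$, and Conjecture \ref{GP} says $\phi$ is generic iff $L(s,\phi,\Ad)$ is regular at $s=1$; combining these gives Conjecture \ref{generic conj} immediately. For classical groups this equivalence settles the matter, since Conjecture \ref{GP} is known there (\cite{JS04}, \cite{Liu11}, \cite{JL14}, \cite{GI}). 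No Aubert duality, no Arthur packets, no case split on whether $V_\lambda$ contains an Arthur-type parameter.

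Your argument, by contrast, tries to pass through Conjecture \ref{dual generic conj} and Working Hypotheses \ref{assumption}(1), and you correctly identify that this only works when $V_\lambda$ contains a parameter of Arthur type; the remaining case is a genuine gap you do not close. More seriously, in the paper's flow the implication runs the other way: Conjecture \ref{GP} (hence Conjecture \ref{generic conj}) is used as \emph{input} to prove Lemma \ref{dual generic}, which is exactly the Arthur-type instance of Conjecture \ref{dual generic conj}. So attempting to deduce Conjecture \ref{generic conj} from Conjecture \ref{dual generic conj} reverses the dependency and, for the groups at hand, risks circularity. Finally, your converse direction (open $\Rightarrow$ generic) does not use Conjecture \ref{dual generic conj} at all---it uses Shahidi's tempered-packet conjecture---so even in the Arthur-type case you are not really deriving Conjecture \ref{generic conj} from Conjecture \ref{dual generic conj} alone.
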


By Proposition \ref{CFMZ}(2), the above Conjecture \ref{generic conj} is  equivalent to the following conjecture of Gross-Prasad and Rallis. 

\begin{conj}[{\cite[Conjecture 2.6]{GP}}]\label{GP}
A local $L$-parameter $\phi$ of $G$ is generic if and only if $L(s,\phi,\Ad)$ is regular at $s=1$.
\end{conj}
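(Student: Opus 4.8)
The plan is to reduce Conjecture \ref{GP} to the geometric statement of Conjecture \ref{generic conj} and then to establish the latter for $\Sp_{2n}$ and split $\SO_{2n+1}$ by combining the theory of standard modules with Shahidi's theory of local coefficients. Indeed, Proposition \ref{CFMZ} (2) already identifies regularity of $L(s,\phi,\Ad)$ at $s=1$ with openness of the orbit $C_\phi$ in the Vogan variety, so it suffices to prove that a local $L$-parameter $\phi$ of $G_n$ is generic (i.e.\ $\Pi_\phi$ contains a generic member) if and only if $\phi$ is open. We emphasize that this statement concerns arbitrary $L$-parameters, not only those of Arthur type, so it does not follow formally from the Enhanced Shahidi's Conjecture or from the machinery of local Arthur packets developed above; the point of departure must be the Langlands classification.

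First I would pass to the standard module: the Langlands classification attaches to $\phi$ a standard module $I(\phi)=\mathrm{Ind}_P^{G_n}(\tau\otimes\sigma)$, where $\tau$ is an essentially tempered representation of a product of general linear groups and $\sigma$ is an irreducible tempered representation of a smaller group $G_{n_0}$, with $\phi$ equal to the $L$-parameter of the Langlands quotient $J(\phi)$ of $I(\phi)$. Both sides of the desired equivalence are controlled by the reducibility of $I(\phi)$: on the one hand, by the standard module conjecture for quasi-split classical groups (Heiermann--Opdam, Heiermann--Mui\'c), together with the hereditary property of genericity under parabolic induction from a generic tempered representation (Rodier, Shahidi, and the known genericity of tempered $L$-packets of classical groups via Arthur's work), $\Pi_\phi$ contains a generic member if and only if $I(\phi)$ is irreducible; on the other hand, the explicit reducibility criteria of M{\oe}glin and Shahidi for induced representations of $\Sp_{2n}$ and $\SO_{2n+1}$ express the irreducibility of $I(\phi)$ in terms of the absence of poles at $s=1$ of the Langlands--Shahidi standard intertwining operators attached to the inducing data, which are precisely the poles of $L(s,\phi,\Ad)$ at $s=1$. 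Chaining these with Proposition \ref{CFMZ} (2) yields ``$\phi$ generic $\iff$ $I(\phi)$ irreducible $\iff$ $L(s,\phi,\Ad)$ regular at $s=1$ $\iff$ $\phi$ open,'' which is Conjecture \ref{generic conj}, hence Conjecture \ref{GP}. A preliminary reduction to good-parity parameters, using the factorization of $L(s,\phi,\Ad)$ into Rankin--Selberg factors and the compatibility of both genericity and the adjoint $L$-function with the decomposition in Theorem \ref{thm reduction to gp}, streamlines the bookkeeping.

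The main obstacle I expect is the precise identification of ``$I(\phi)$ irreducible'' with ``$L(s,\phi,\Ad)$ regular at $s=1$'': Proposition \ref{CFMZ} (2) supplies one bridge (openness of $C_\phi$ versus the adjoint $L$-function), but relating reducibility of the standard module to the same analytic data requires carefully unwinding the Langlands--Shahidi normalization of the standard intertwining operators and invoking the full classification of reducibility points of parabolically induced representations of $\Sp_{2n}$ and $\SO_{2n+1}$ due to M{\oe}glin, matching the ``bad'' contributions there against the local factors of $L(s,\phi,\Ad)$ at $s=1$. A secondary subtlety is that in the literature the standard module conjecture for classical groups is itself entangled with the tempered $L$-function conjecture, so some care is needed to keep the logical path non-circular; the cleanest route is to first settle the tempered case, where $\Pi_\phi$ is a tempered $L$-packet (generic by Arthur's classification) and $\phi$ is open by Proposition \ref{CFMZ} (1), and then to bootstrap to general $\phi$ using the reducibility criteria above.
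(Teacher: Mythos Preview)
The paper does not prove Conjecture \ref{GP}; it is stated as a conjecture and then immediately noted to be known for the relevant groups by citing \cite{JS04}, \cite{Liu11}, \cite{JL14}, and \cite{GI}. There is therefore no ``paper's own proof'' to compare your proposal against: the paper treats this statement as established input from the literature and uses it (via Proposition \ref{CFMZ} (2)) in the proof of Lemma \ref{dual generic}, rather than deriving it.

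That said, your sketch is broadly in the spirit of the cited references. The proofs in \cite{Liu11} for $\Sp_{2n}$ and \cite{JS04} for $\SO_{2n+1}$ do proceed through the Langlands classification and standard modules, the standard module conjecture, and the Langlands--Shahidi theory of local factors, identifying genericity of the Langlands quotient with irreducibility of the standard module and then with regularity of the adjoint $L$-function at $s=1$. Your chain ``$\phi$ generic $\iff I(\phi)$ irreducible $\iff L(s,\phi,\Ad)$ regular at $s=1$'' is exactly the skeleton of those arguments. One point to be careful about: you invoke Arthur's classification to know that tempered $L$-packets are generic, but \cite{JS04} predates \cite{Art13} and \cite{Liu11} is independent of it, so the original proofs use the generic packet conjecture via Konno \cite{Kon02} and the construction of the local Langlands correspondence for these groups rather than Arthur's endoscopic classification; if you want a self-contained argument you should make clear which inputs you are assuming. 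The reduction to good parity via Theorem \ref{thm reduction to gp} is not needed for this statement, since Conjecture \ref{GP} concerns arbitrary $L$-parameters and the factorization of $L(s,\phi,\Ad)$ into Rankin--Selberg pieces handles the non-good-parity part directly.
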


 Conjecture \ref{GP} was proved in \cite[Theorem C, (3)]{JS04} for $\SO_{2n+1}(F)$, in \cite[Theorem 1.2]{Liu11} for $\Sp_{2n}(F)$, and in \cite[Theorem 1.5]{JL14} for $\SO_{2n}(F)$. In a more general setting, Conjecture \ref{GP} was proved by Gan-Ichino \cite[Proposition B.1]{GI} under some hypothesis \cite[Sect. B.2]{GI}, which are known to be true for general linear groups and classical groups. Thus Conjecture \ref{generic conj} and Conjecture \ref{GP} hold for general linear groups and classical groups.

In the geometric setting \cite{CFMMX22}, the local Langlands correspondence gives a bijection between irreducible smooth representations of $G$ with infinitesimal character $\lambda$ and simple objects of the category of $H_\lambda$-equivariant perverse sheaves on $V_\lambda$, which is denoted by $ \mathsf{Per}_{H_\lambda}(V_\lambda)$. For an irreducible representation $\pi$, we denote by $\mathcal{P}(\pi)$ the corresponding perverse sheave.   A Fourier transform operator $\mathsf{F\hskip-1pt t}:\mathsf{Per}_{H_\lambda}(V_\lambda)\to \mathsf{Per}_{H_\lambda}(V_\lambda)$ is defined in \cite{CFMMX22} and it is expected that $\mathsf{F\hskip-1pt t}(\mathcal{P}(\pi))=\mathcal{P}(\widehat \pi) $. Moreover, if $\pi$ is an irreducible representation corresponding to $(\phi^0, 1)$ under the local Langlands correspondence, where $1$ is the trivial representation of $\mathcal{S}_{\phi^0}$, then it is known that $ \mathsf{F\hskip-1pt t}(\mathcal{P}(\pi))$ corresponds to a representation with local $L$-parameter $\phi_0$.  Thus, the geometric analogue of Conjecture \ref{dual generic conj} follows from Conjecture \ref{generic conj} or Conjecture \ref{GP}.

In the following, we show that Working Hypotheses \ref{assumption}(2) holds for quasi-split symplectic or special orthogonal groups.

\begin{lemma}\label{dual generic}
Let $\RG$ be a quasi-split symplectic or special orthogonal group. Let $\pi$ be an irreducible representation of $G$. If $\pi$ is generic and $\pi \in \Pi_{\psi}$ for some $\psi \in \Psi(G)$, then $\widehat{\pi}\in\Pi_{\phi_0}.$
\end{lemma}

\begin{proof}  
 By Proposition \ref{CFMZ}(2) and Conjecture \ref{GP}, which were known in these cases by \cite{Liu11}, \cite{JL14} and \cite{JS04}, or \cite{GI}, the local $L$-parameter $\phi_\pi$ of $\pi$ must be open in $V_\lambda.$ Consequently, $\phi_\pi=\phi^0$ by Lemma \ref{max min V_lambda}. By Lemma \ref{phi_0 phi^0 Arthur type}, $\phi^0= \phi_{\psi^0}$, where $\psi^0$ must be tempered by Proposition \ref{CFMZ}(1). We then have $\pi\in\Pi_{\psi^0}=\Pi_{\phi^0}$. Moreover, the character $\langle \cdot, \pi \rangle_{\psi^{0}}$ in $\widehat{\mathcal{S}}_{\psi^{0}}$ is trivial since $\pi$ is generic (see \cite[Theorem 8.1]{HLL22}).

Denote $\psi_0:= \widehat{\psi^0}$. Note that $\phi_0= \phi_{\psi_0}$ (see the proof of Lemma \ref{phi_0 phi^0 Arthur type}). The character $\langle \cdot, \widehat{\pi} \rangle_{\psi_0}$ in $\widehat{\mathcal{S}}_{\psi_0}$ is equal to $\varepsilon_{\psi_0,\zeta}^{M/MW}$, which lies in $\widehat{\mathcal{S}}_{\phi_{\psi_0}} \subseteq \widehat{\mathcal{S}}_{\psi_0}$ (see \cite{LLS24}). Thus $\widehat{\pi} \in \Pi_{\phi_0}$ by \cite[Lemma 7.4.1]{Art13}. This completes the proof of the lemma.
\end{proof}

Note that a key ingredient in the above proof is Conjecture \ref{GP}. We expect that Conjecture \ref{GP} implies Working Hypotheses \ref{assumption}(2) in general.

Since we have verified the Working Hypotheses \ref{assumption} for $\Sp_{2n}(F)$ and split $\SO_{2n+1}(F)$, we can give a new proof of the enhanced Shahidi conjecture \ref{Enhanced Shahidi conjecture} in these cases.

\begin{thm}\label{new proof of enhanced Shahidi conjecture}
The enhanced Shahidi conjecture \ref{Enhanced Shahidi conjecture} holds for $\Sp_{2n}(F)$ and split $\SO_{2n+1}(F)$.
\end{thm}
\begin{proof}
If $\psi$ is tempered, then it is well-known that $\Pi_\psi$ contains a generic representation. Thus it suffices to show that if $\psi \in \Psi(G)$ and $\Pi_\psi$ contains a generic representation, then $\psi$ must be tempered. This follows from Theorem \ref{proof of enhanced shahidi} directly. Notice that in this case, Conjecture \ref{main conj} is Theorem \ref{thm max intro}, Working Hypotheses \ref{assumption}(1) is verified in \cite[\S A]{Xu17} (see Theorem \ref{dual packets}) and Working Hypotheses \ref{assumption}(2) is Lemma \ref{dual generic}.
\end{proof}

The enhanced Shahidi conjecture \ref{Enhanced Shahidi conjecture} for $\Sp_{2n}(F)$ and split $\SO_{2n+1}(F)$ has been proved in \cite[Theorem 8.6]{HLL22}, whose proof relies on the construction of representations of Arthur type, while the new proof above relies on the geometry of $L$-parameters. More precisely, 
in \cite[Theorem 8.6]{HLL22}, the main tool to show that a generic representation of Arthur type is tempered is the classification of irreducible generic representations in terms of their Langlands classification. Then the uniqueness of a local Arthur packet that contains a given generic tempered representation was proved by showing that none of the operators (see \S\ref{sec Operators on extended multi-segments}) are applicable.
In contrast, in Theorem \ref{new proof of enhanced Shahidi conjecture}, to show that a generic representation of Arthur type and of good parity is tempered, we use the geometric description of a generic parameter. We obtain the uniqueness by taking the Aubert-Zelevinsky involution of the generic representation, which lies in the $L$-packet of an $L$-parameter of Arthur type corresponding to the zero orbit, and then the uniqueness follows from the closure ordering (see the proof of Theorem \ref{proof of enhanced shahidi}).

\begin{remark}
Proposition \ref{CFMZ} implies that open parameters (or generic parameters assuming Conjecture \ref{GP}) are generalizations of tempered parameters. Since ABV-packets defined in \cite{CFMMX22} are expected to be generalizations of the notion of Arthur packets, the ABV-version of Conjecture \ref{Enhanced Shahidi conjecture} is that $\Pi_{\phi}^{\mathrm{ABV}}$ contains a generic representation if and only if $\phi$ is open, see \cite{CFMZ22}. In viewing of the fact that $\Pi_\phi\subset \Pi_{\phi}^{\mathrm{ABV}}$ and Proposition \ref{CFMZ}, it is also a generalization of Conjecture \ref{GP}. The main result of \cite{CFMZ22} is that the above ABV-version conjecture  holds for quasi-split classical groups, which inspires our new proof of the enhanced Shahidi conjecture for $\Sp_{2n}$ and split $\SO_{2n+1}$.
\end{remark}

\section{On non-containment of local Arthur packets}\label{sec noncontainment}

In this section, we show that for $G_n=\Sp_{2n}(F)$ or split $\SO_{2n+1}(F)$, local Arthur packets can not be fully contained in other ones and prove Theorem \ref{thm three orderings} for $D$ ordering.

First, we recall the following theorem which states that the Aubert-Zelevinsky involution is compatible with endoscopic transfer for quasi-split classical groups.  

\begin{thm}[{\cite[\S A]{Xu17}}]\label{dual packets}
Let $\RG=\Sp_{2n}$ or quasi-split $\SO_n$ and let $\psi \in \Psi^+(G)$. Then 
$$
\Pi_{\widehat{\psi}}=\{\widehat{\pi} \, | \, \pi\in\Pi_\psi\}.
$$ 
\end{thm}

Therefore, for any $\pi \in \Pi_{A}(G_n)$, we have a bijective map
\begin{align}
    \Psi(\pi) &\to \Psi(\widehat{\pi}) \label{eq dual of psi}.\\
    \psi & \mapsto \widehat{\psi}\nonumber
\end{align}
The following lemma states that the map \eqref{eq dual of psi} reverses the ordering $\geq_{O}$ on these sets.

\begin{lemma}\label{lem order reversing}
If $T$ is a raising operator applicable on $\psi \in \Psi^+(G_n)$, then there exists another raising operator $T'$ applicable on $ \widehat{T(\psi)}$ such that
\[ \widehat{\psi}= T'(\widehat{T(\psi)} ). \]
In particular, for any $\pi \in \Pi_{A}(G_n)$, the map \eqref{eq dual of psi} is order reversing with respect to $\geq_{O}$.
\end{lemma}
\begin{proof}
For the first part, it follows from definition that if $T= ui_{i,j}^{-1}$ (resp, $dual \circ ui_{i,j} \circ dual$, $dual_k^{-}$), then $ T'=dual \circ ui_{i,j} \circ dual$ (resp. $ui_{i,j}^{-1}$, $dual_k^{-}$). For the second part, if $\psi_1 \geq_O \psi_2$, then we may write
\[ \psi_1 = T_1 \circ \cdots \circ T_m (\psi_2), \]
where $T_l$'s are raising operators. We apply induction on $m$ to show that $\widehat{\psi}_2 \geq_O \widehat{\psi}_1 $. The case $m=0$ is clear. For $m \geq 1$, let $\psi_3:=T_2 \circ \cdots \circ T_m (\psi_2)$ so that $\psi_1= T_1(\psi_3)$. The first part gives a raising operator $T'$ such that
\[ \widehat{\psi}_3 = T' (\widehat{\psi}_1), \]
and hence $\widehat{\psi}_3 \geq_O \widehat{\psi}_1$. The induction hypothesis then gives $\widehat{\psi}_2 \geq_O \widehat{\psi}_3 \geq_O \widehat{\psi}_1$. This completes the proof of the lemma.
\end{proof}

Here is the main theorem of this section.

\begin{thm}\label{noncontainment}
Given any $\psi_1,\psi_2 \in \Psi^+(G_n)$. 
If $\Pi_{\psi_1} \supseteq \Pi_{\psi_2}$, then $\psi_1=\psi_2$.
\end{thm}
% \begin{proof}
% Take $\pi \in \Pi_{\phi_{\psi_2}}$, then Proposition \ref{properties of max and min}(1) shows that $\psi_2=\psi^{max}(\pi)$. Therefore, Theorem \ref{thm max min intro} implies that \begin{align}\label{eq containment 1}
%     \psi_{2} \geq_O \psi_1
% \end{align}
% since $\psi_1 \in \Psi(\pi)$.

% On the other hand, recall that for any $\psi \in \Psi(G_n)$, we have 
% \[ \Pi_{\widehat{\psi}}= \{ \widehat{\pi} \ | \ \pi \in \Pi_{\psi} \}. \]
% Therefore, we also have that $\Pi_{\widehat{\psi}_1} \supseteq \Pi_{\widehat{\psi_2}}$. Then the same argument gives that 
% \[\widehat{\psi}_2 \geq_O \widehat{\psi}_1.\] Applying Lemma \ref{lem order reversing}, we then have 
% \begin{align}\label{eq containment 2}
%     \psi_1 \geq_O \psi_2.
% \end{align}  
% Finally, we conclude from \eqref{eq containment 1} and \eqref{eq containment 2} that $\psi_1=\psi_2$ as $\geq_O$ is a partial order. This completes the proof of the theorem.
% \end{proof}

Notice that $\Pi_{\psi_1} \supseteq \Pi_{\psi_2}$ implies that 
$\Pi_{\widehat{\psi}_1} \supseteq \Pi_{\widehat{\psi}_2}$.
Hence, Theorem \ref{noncontainment} is directly implied by the following more general statement. 

\begin{thm}\label{noncontainment general}
Given any $\psi_1,\psi_2 \in \Psi^+(G_n)$. If $\Pi_{\phi_{\psi_2}}\cap \Pi_{\psi_1}\neq\emptyset$ and $\Pi_{\phi_{\widehat{\psi}_2}}\cap \Pi_{\widehat{\psi}_1}\neq\emptyset$, then $\psi_1=\psi_2$.
\end{thm}
\begin{proof}
Take $\pi \in \Pi_{\phi_{\psi_2}}\cap \Pi_{\psi_1}$. Proposition \ref{properties of max and min}(1) and Theorem \ref{thm max min intro} implies that
\[ \psi_2=\psi^{max}(\pi) \geq_{O} \psi_1. \]
Similarly, take $\pi'\in \Pi_{\phi_{\widehat{\psi}_2}}\cap \Pi_{\widehat{\psi}_1}$, then we obtain
\[ \widehat{\psi}_2= \psi^{max}(\pi') \geq_{O} \widehat{\psi_1}, \]
which implies $\psi_1 \geq_O \psi_2 $ by Lemma \ref{lem order reversing}. Since $\geq_O$ is a partial order on $\Psi(\pi)$, we conclude that $\psi_1=\psi_2$, which completes the proof of the theorem.
\end{proof}

\begin{remark}
\begin{enumerate}
    \item On the contrary, for classical groups over complex fields, Barbasch and Trapa, M{\oe}glin and Renard (see \cite[Introduction and Theorem 12.5]{MR17}) showed that local Arthur packets can be fully contained in other ones. 
    \item Let $\RG$ be a classical group quasi-split over $F$ and $\RG_1$ be a non-quasi-split pure inner form of $\RG$. Let $G=\RG(F)$ and $G_1=\RG_1(F)$. For the conjectural local Arthur packets $\Pi_{\psi}(G_1)$ of $G_1$ (see \cite[Conjecture 9.4.2]{Art13}), it is possible that $\Pi_{\psi_1}(G_1) \supseteq \Pi_{\psi_2}(G_1)$ but $\psi_1 \neq \psi_2$. See \cite[\S 16.1.4]{CFMMX22} for examples. One can observe in these examples that one of $\psi_1$ or $\psi_2$ is $G_1$-relevant, but the associated $L$-parameter is not. Thus one of the $L$-packets $\Pi_{\phi_{\psi_1}}(G_1)$ or $\Pi_{\phi_{\psi_2}}(G_1)$ is empty, in which case Theorem \ref{noncontainment general} does not imply Theorem \ref{noncontainment}.
\end{enumerate}

\end{remark}

By Theorem \ref{noncontainment}, 
there are no proper subsets of local Arthur packets providing stable distributions which are compatible with endoscopic liftings (for the compatibility properties, see \cite[Theorem 2.2.1(a), (2.2.3) and (2.2.4)]{Art13}). 
% Over each whole local Arthur packet, it is believed that there is a unique (up to scalars) stable distribution which is compatible with endoscopic liftings. After taking the action of the center of $G_n$ on this unique stable distribution, one would obtain that representations in any local Arthur packet have the same central character. 
% We record this as the following theorem. 

% \begin{thm}\label{centralcharacter}
% Given any $\psi \in \Psi(G_n)$. All representations in ${\Pi}_{\psi}$ have the same central character.
% \end{thm}

% The contrapositive statement of Theorem \ref{noncontainment general} is the following.

% \begin{thm}
% Suppose $\psi_1\neq\psi_2$ are local Arthur parameters. Then at least one of $\Pi_{\phi_{\psi_2}}\cap \Pi_{\psi_1}$ or $\Pi_{\phi_{\widehat{\psi}_2}}\cap \Pi_{\widehat{\psi}_1}$ is empty.
% \end{thm}

Finally, we prove Theorem \ref{thm three orderings} for $D$ ordering using the order reversing property of $\geq_O$. 

\begin{proof}[Proof of Theorem \ref{thm three orderings} for $D$ ordering]
 Recall that $\underline{p}^D(\psi)= \underline{p}^A(\widehat{\psi})$, and hence $\psi_1 \gneq_{D} \psi_2$ if and only if $\widehat{\psi}_{1} \lneq_{A} \widehat{\psi}_2$. Thus Part (1) follows from that of $A$ ordering and Lemma \ref{lem order reversing}. Part (2) follows from Part (1) and Theorem \ref{thm max min intro}. This completes the proof of the theorem. 
\end{proof}

\end{document}